\documentclass{amsart}
\usepackage{graphicx}
\usepackage{graphicx,amsmath,amssymb}
\newtheorem{thm}{Theorem}[section]
\newtheorem{cor}[thm]{Corollary}
\newtheorem{lem}[thm]{Lemma}
\newtheorem{prop}[thm]{Proposition}
\theoremstyle{definition}

\theoremstyle{remark}
\newtheorem{rem}[thm]{Remark}

\numberwithin{equation}{section}

\begin{document}

\title[Sum rules ...]{Sum rules and a second order Feynman-Hellmann theorem for abstract operators with applications}%
\author{Joachim Stubbe}
\address{Joachim Stubbe, EPFL SB-DO, Station 3, CH-1015 Lausanne, Switzerland}%
\email{Joachim.Stubbe@epfl.ch}%
\thanks{}%
\subjclass{subjects}%
\keywords{Sum rules for eigenvalues, Feynman-Hellmann theorem, perturbation theory, Schr\"{o}dinger operators, sum rules for zeros of special functions, trace inequalities for hermitian matrices, concavity, eigenvalues of matrix sums, graph Laplacians}

\date{May, 23th, 2026}
\begin{abstract}
We discuss the role of the Feynman-Hellmann theorem for abstract one-parameter families of Hamiltonians in sum rules and trace identities of Harrell and the author and its application to spectral theory. In particular, we derive a sum rule for the second derivative of eigenvalues of a one-parameter family of Hamiltonians extending thereby concepts of second order perturbation theory. We present applications to semiclassical eigenvalue bounds of Schr\"{o}dinger operators as Lieb-Thirring inequalities, zeros of Bessel functions, eigenvalue inequalities for sums of matrices and trace inequalities.
\end{abstract}
\maketitle
\section{Introduction}
Sum rules and the Feynman-Hellmann theorem are important concepts as well as powerful tools since the early days of quantum mechanics for computing atomic and molecular structure and its interaction with radiation. In the past decades both tools have been also repeatedly used in spectral theory and spectral geometry. While, roughly spoken, sum rules relate eigenvalues/energies of a fixed quantum system to other quantities as expectation values, the Feynman-Hellmann theorem relates the change in a system's energy to the expectation value of the first derivative of the Hamiltonian with respect to a parameter. The main objective of this paper is twofold: First, to make use of the Feynman-Hellman theorem for an abstract operator depending on a parameter in sum rules and, second, to present an abstract sum rule for such operators including also the expectation value of the second derivative of the operator with respect to the parameter. We present applications to Schr\"{o}dinger and Laplace-Beltrami operators, zeros of special functions and to eigenvalues of matrices proving new sharp estimates for eigenvalues.
 
Quantum mechanical sum rules relate spectral quantities like eigenvalues or differences of eigenvalues (transition energies), and expectation values of a linear operator describing a quantum system to other physical properties of this system. The starting point is the Thomas-Reiche-Kuhn (TRK) sum rule, first presented hundred years ago, relating dipole transition matrix elements - also called oscillator strengths - to the energy differences of the corresponding bound states. More precisely, the TRK sum rule states that the sum of the oscillator strengths of all possible transitions from a given bound state equals the number of particles in this state (\cite{Tho1925}, \cite{Ku1925}, \cite{ReiTho1925}). The Thomas-Reiche-Kuhn (TRK) sum rule was extended by H. Bethe in 1930 (\cite{Bet1930}). These sum rules can be derived from commutator algebra for the underlying Hamiltonian $H$. For a summary and applications we refer to the textbook by Bethe and Salpeter  (\cite{BetSal1957}, pp. 255-281, and pp.357-359) and to the review article (\cite{Ino1971}).

The technique of sum rules and commutation relations was introduced into spectral theory in the nineties of the past century beginning with the work of Harrell \cite{Har1993}, Harrell and Michel \cite{HarMi1994} with applications to eigenvalues of Laplace and Schr\"{o}dinger operators and Arai \cite{Arai1992} proving an abstract version of the TRK sum rule applying it to Sturm-Liouville operators with well-known special functions as eigensolutions. A new quadratic sum rule was shown by Harrell and the author in a seminal paper on abstract Schr\"{o}dinger operators \cite{HarStu1997} and later extended by them to trace identities and sum rules for general abstract self-adjoint operators $H$ acting on a Hilbert space \cite{HarStu2011}. Applied to Schr\"{o}dinger operators and Laplace-Beltrami this quadratic sum rule yields, for example, sharp estimates for the bottom of the spectrum either in form of universal inequalities between eigenvalues and for eigenvalue gaps or phase space bounds and has a large potential for further applications in spectral geometry. Indeed, recently Provenzano and the author extended this sum rule to compact homogeneous irreducible Riemannian manifolds and found a remarkable universal property of Laplacian spectra on compact, rank one symmetric spaces \cite{ProStu2025}. In the present paper we will apply the quadratic sum rule to the zeros of special functions and to hermitian matrices.

An appropriate and simple abstract setting unifying the presentation and derivation of the aforementioned sum rules is the following: Consider a self-adjoint operator $H$ acting on a Hilbert space $(\mathcal{H},\langle,\rangle)$ with eigenvalues $\lambda_j$, labeled in increasing order, and eigenvectors $u_j$ forming an orthonormal basis of the underlying Hilbert space. The case where $H$ also has continuous spectrum is settled by Harrell and the author in \cite{HarStu2010} allowing therefore the application of the sum rules described below to operators in quantum physics. Let $G$ be another self-adjoint operator such that the quantities $[H,G]=HG-GH$ and $[G, [H,G]]$ are well defined on the eigenvectors $u_j$ (which is typically true if $G$ is a bounded operator, otherwise see \cite{HarStu2011},\cite{HarStu2010} for the precise hypotheses). Then by the completeness of the eigenvectors $u_j$.
\begin{equation}\label{TRK-abstract}
	\langle[G, [H,G]] u_j,u_j\rangle = 2\sum_{\lambda_k}(\lambda_k-\lambda_j)|\langle G u_j,u_k\rangle |^2.
\end{equation}
This is the abstract version of the TRK sum rule. When $H=-\Delta +V(x)$ $V$ real-valued, $\mathcal{H}=L^2$, is a Schr\"{o}dinger operator defined on $\mathbb{R}^d$ or on a subdomain with Dirichlet boundary conditions or on an immersed manifold one may choose $G$ as the multiplication operator by a coordinate to get the known TRK sum rule
\begin{equation}\label{TRK-Laplace-1}
d = \sum_{\lambda_k}(\lambda_k-\lambda_j)\sum_{a=1}^{d}|\langle x_a u_j,u_k\rangle |^2,
\end{equation}
or choosing the multiplication operator $G=w\cdot x$ in $\mathbb{R}^d$ 
\begin{equation}\label{TRK-Laplace-2}
	w\cdot w = \sum_{\lambda_k}(\lambda_k-\lambda_j)|\langle (w\cdot x) u_j,u_k\rangle |^2.
\end{equation}
As shown in \cite{HarStu2011} for $G$ not necessarily self-adjoint with adjoint $G^*$ the abstract TRK sum rule \eqref{TRK-abstract} becomes
\begin{equation}\label{TRK-abstract-general}
	\begin{split}
	&\langle[G^*, [H,G]] u_j,u_j\rangle+\langle[G, [H,G^*]] u_j,u_j\rangle \\ &=2\sum_{\lambda_k}(\lambda_k-\lambda_j)\bigg(|\langle G u_j,u_k\rangle |^2+|\langle G^* u_j,u_k\rangle |^2\bigg).
	\end{split}
\end{equation}
The advantage of the sum rule \eqref{TRK-abstract-general} in the application to Schr\"{o}dinger operators on unbounded domains is that we may choose bounded operators $G$ as, for example, the unitary multiplication operator $G(x)=e^{iq\cdot x}$ so that no additional hypotheses on operator domains are necessary. This choice yields Bethe's sum rule (\cite{Bet1930}, see also \cite{Wang1999}). Since $V(x)$ is real we may choose the eigenfunctions to be real functions (as for real symmetric matrices we may choose eigenvectors with real entries) we have $|\langle e^{iq\cdot x} u_j,u_k\rangle |^2=|\langle e^{-iq\cdot x} u_j,u_k\rangle |^2$ and therefore
\begin{equation}\label{Bethe-sum-rule}
	q\cdot q = \sum_{\lambda_k}(\lambda_k-\lambda_j)|\langle e^{iq\cdot x} u_j,u_k\rangle |^2.
\end{equation}
In the abstract framework introduced before the quadratic sum rule can be stated as follows (see \cite{HarStu2011}). Let $J$ be a subset of the spectrum of $H$. For all $z$ real we have
\begin{equation}\label{HS-sumrule-discrete}
	\begin{split}
		&\frac1{2}\sum_{\lambda_j\in J}  (z-\lambda_j)^2\,\big(\langle[G^*,[H,G]]\phi_j,\phi_j\rangle+\langle[G,[H,G^*]]\phi_j,\phi_j\rangle\big)\\
		&-\sum_{\lambda_j\in
			J}(z-\lambda_j)\,\big(\langle[H,G]\phi_j,[H,G]\phi_j\rangle+\langle[H,G^*]\phi_j,[H,G^*]\phi_j\rangle\big)\\
		&=\\
		&\sum_{\lambda_j\in J}\sum_{\lambda_k\notin J}
		(z-\lambda_j)(z-\lambda_k)(\lambda_k-\lambda_j)\big(|\langle G\phi_j,\phi_k\rangle|^2+|\langle G^*\phi_j,\phi_k\rangle|^2\big).\\
	\end{split}
\end{equation}
 We remind that \eqref{HS-sumrule-discrete} is completely algbraic and no variational characterization of eigenvalues is needed. Comparing the coefficients of $z^2,z^1,z^0$ we obtain $3$ different sum rules for each eigenvalue $\lambda_j$ where the coefficient of $z^2$ is the abstract TRK sum rule \eqref{TRK-abstract-general}. The importance of the quadratic sum rule \eqref{HS-sumrule-discrete} has been shown in a series of applications to Dirichlet Laplacians, Schr\"{o}dinger operators and Laplace-Beltrami operators on compact manifolds: sharp estimates for eigenvalue moments of the bottom of the spectrum choosing $J=\{\lambda_1,\cdots \lambda_n\}$, bounds for eigenvalue gaps if $\lambda_{n+1}>\lambda_{n}$  (\cite{HarStu1997},\cite{HarStu2011}) and bounds on Riesz-means for these operators in terms of geometric properties via semiclassical limits (see e.g. \cite{HarStu1997},\cite{HarHer2008},\cite{HarStu2011}, \cite{ProStu2025}). Indeed, the key observation is that for $z\in[\lambda_{n},\lambda_{n+1}]$ the r.h.s. of \eqref{HS-sumrule-discrete} is non-positive and bounded by the quadratic polynomial (in $z$) given by
 \begin{equation}\label{HS-sumrule-discrete-upper bound}
 \frac{1}{2}\,(z-\lambda_{n})(z-\lambda_{n+1})\sum_{j=1}^n\langle[G^*, [H,G]] u_j,u_j\rangle+\langle[G, [H,G^*]] u_j,u_j\rangle.
 \end{equation}
 
 The Feynman-Hellmann theorem also presents one of the important achievements in early quantum mechanics. It was very likely first proven by G\"{u}ttinger \cite{Güt1932}  although the relation was postulated before by various authors (see also the historical review \cite{Wal2005}) then independently by Hellmann \cite{Hel1933} and Feynman \cite{Fey1939}. Its applications in atomic and molecular physics are widely discussed in the text book by Thirring \cite{Thi2002}, see also the brief overview \cite{PolMur2018}. In other fields of applied mathematics we find applications to zeros of special functions \cite{IsmZha1988} and to (not necessarily symmetric) matrices (see e.g. Theorem 5 in \cite{Lan1964})
 We proceed the discussion of the Feynman-Hellmann theorem in the abstract setting introduced above for sum rules.
 
 Let $H=H(\tau)=H_{\tau}$, $\tau$ real,  be a one parameter family of self-adjoint operator on a  Hilbert space $(\mathcal{H},\langle,\rangle)$ with common domain $\mathcal{D}_H$ having - for simplicity - purely discrete spectrum consisting of eigenvalues $\lambda_j=\lambda_j(\tau)=\lambda_{j,\tau}$, $j=1,2,\cdots$, numbered in increasing order with corresponding eigenvectors $u_j=u_j(\tau)=u_{j,\tau}$ forming an orthonormal basis of the underlying Hilbert space. For any $\delta,\tau$  and any $j$ the integral Feynman Hellmann theorem is a direct consequence of the self-adjointness and may be stated as follows:
 
 \begin{equation}\label{Feynman-Hellman-integral-version}
 	\langle  u_{j,\tau+\delta},(H_{\tau+\delta}-H_{\tau})u_{j,\tau}\rangle=(\lambda_{j,\tau+\delta}-\lambda_{j,\tau})\langle u_{j,\tau+\delta},u_{j,\tau}\rangle.
 \end{equation}
 
 Now the standard version of Feynman-Hellmann theorem can be easily proven dividing the integral Feynman Hellmann theorem by $\delta\neq 0$ and as in \cite{IsmZha1988} assuming the existence of the following limits:
 \begin{equation*}
 	\begin{split}
 	&\underset{\delta\to 0}{\lim} \langle u_{j,\tau+\delta},u_{j,\tau}\rangle=\langle u_{j,\tau},u_{j,\tau}\rangle=1, \\
 	 &\langle u_{j,\tau},\dot{H}_{\tau} u_{j,\tau}\rangle:=\underset{\delta\to 0}{\lim} \langle  
 		 \frac{H_{\tau+\delta}-H_{\tau}}{\delta} \, u_{j,\tau+\delta},u_{j,\tau}\rangle.\\
 		 \end{split} 
 \end{equation*}
 Then $\lambda_{j,\tau}$ is differentiable in $\tau$ and 
 \begin{equation}\label{Feynman-Hellmann theorem-1}
 	\dot{\lambda}_{j,\tau}=\langle u_{j,\tau},\dot{H}_{\tau} u_{j,\tau}\rangle.
 \end{equation}
 In particular, the Feynman-Hellmann theorem holds for all non-degenerate eigenvalues of $H$ (see e.g.\cite{Thi2002}, p.149). The conditions of the Feynman-Hellmann theorem are also met when the mapping $\tau\to H(\tau) $ is sufficiently regular. Since in most of the applications $H$ is of the form $H=H_0+f(\tau)H'$ for some analytic function $f$ we shall work with this strong hypothesis. Then the Kato-Rellich theorem states that for any non-degenerate eigenvalue $\lambda_j$ of $H(\tau_0)$ there is a neighborhood of $\tau_0$ such that $\lambda_j$ is analytic and $u_j$ is an analytic normalized eigenvector (see e.g. \cite{Kato1980}, pp. 395). This extends also to non-degenerate eigenvalues when eigenvalues are correctly relabeled (see e.g. \cite{Thi2002}) when the perturbation $H'$ is bounded relative to $H_0$ (\cite{Thi2002} theorem 3.5.13, p. 147). It is a well-known fact that this relabeling is potentially in conflict with our labeling of the eigenvalues in increasing order. Consider, for example, the $2x2$-matrix
 \begin{equation}
 	H(\tau)=\left(\begin{array}{cc}0& \tau\\ \tau & 0\end{array}\right)
 \end{equation}
 which is an analytic operator family with analytic eigenvalues $-\tau, \tau$ and corresponding eigenvectors. However, labeled in increasing order, $\lambda_1=\min(-\tau, \tau)0=-|\tau|$ and $\lambda_2=|\tau|$ are not differentiable at $\tau=0$ and their corresponding eigenvectors are even discontinuous. When degenerate eigenvalues appear at some $\tau_0$ and $\tau_0$ is a crossing point we consider the piecewise analytic eigenvalues, relabel them if necessary at $\tau_0$ in increasing order for the next interval (see \cite{Kato1980}). Much weaker regularity conditions are needed when $H$ is a symmetric matrix (see e.g. Rellich's theorem, theorem 6.8. in \cite{Kato1980}, \cite{KriMic2003}).
 
 The application of the Feynman-Hellmann theorem in sum rules of the form \eqref{HS-sumrule-discrete} has been introduced by the author in his alternative proofs of the Golden-Thompson inequality and sharp Lieb-Thirring inequalities for Schr\"{o}dinger operators \cite{Stu2010}, see also \cite{FLW2022}, chapter 7, for the Lieb-Thirring inequality).  We give a new version of the proof and further applications of the Feynman-Hellmann theorem in sum rules in the present paper (see Section 2). The key ingredient of the proofs is that the sum rule yields a monotonicity property of certain eigenvalue moments for the bottom or the full discrete spectrum with respect to the semiclassical parameter of Schr\"{o}dinger operators. Our first result is the use of the Feynman-Hellmann theorem in the abstract quadratic sum rule \eqref{HS-sumrule-discrete} for  a the one-parameter family of self-adjoint operators $H=H_{\tau}$.
 
  \begin{thm} \label{thm-Feynman-Hellmann theorem in quadratic sum rules}
  	 $H=H_{\tau}$, $\tau$ real,  be an analytic one parameter family of self-adjoint operators on a  Hilbert space $(\mathcal{H},\langle,\rangle)$ with common domain $\mathcal{D}_H$ having purely discrete spectrum consisting of eigenvalues $\lambda_j=\lambda_{j,\tau}$, $j=1,2,\cdots$, analytic in $\tau$, with corresponding eigenvectors $u_j=u_{j,\tau}$ forming a complete orthonormal basis of the underlying Hilbert space. Let $G_{\alpha}$, $\alpha=1,\dots d$ be a family of linear operators with adjoint $G_{\alpha}^*$ such that all first and second commutators with $H$ are well-defined. Suppose in addition that
  \begin{equation}\label{eq-energy-bounds-on-commutators-1}
  	\sum_{\alpha}\frac1{2}\langle[G_{\alpha}^*,[H,G_{\alpha}]]\phi_j,\phi_j\rangle+\langle[G_{\alpha},[H,G_{\alpha}^*]]\phi_j,\phi_j\rangle=1
  \end{equation}
  and
   \begin{equation}\label{eq-energy-bounds-on-commutators-2}
  	\sum_{\alpha}\langle[H,G_{\alpha}]\phi_j,[H,G_{\alpha}]\phi_j\rangle+\langle[H,G_{\alpha}^*]\phi_j,[H,G_{\alpha}^*]\phi_j\rangle \leq \eta(\tau)\lambda_j+ \theta(\tau)\dot{\lambda}_j
  \end{equation}
  for all $j$ where $\eta,\theta$ are continuous functions such that $1+\eta>0$ and $\theta\neq 0$. Let $A(\tau), B(\tau)$ be positive solutions of the ordinary differential equations
   \begin{equation}\label{eq-energy-bounds-on-commutators-ODEs}
  \dot{A}=-\frac{1+\eta}{\theta}\,A,\quad \dot{B}=-\frac{\eta}{\theta}\,B.
  \end{equation} 
  Then for all $z$ the quantity
  \begin{equation}\label{Riesz-mean-monotonicity}
  	A(\tau)^{-2}\sum_j(zB(\tau)-\lambda_j(\tau))_{+}^2
  \end{equation}
  is non-increasing if $\theta>0$ and  non-decreasing if $\theta<0$ where $x_{+}$ denotes the positive part of the quantity $x$.
  \end{thm}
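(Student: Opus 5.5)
The plan is to combine the quadratic sum rule \eqref{HS-sumrule-discrete} with a direct differentiation of the Riesz mean \eqref{Riesz-mean-monotonicity}, using the Feynman-Hellmann derivatives $\dot\lambda_j$ through hypothesis \eqref{eq-energy-bounds-on-commutators-2}. The argument has two steps: a static inequality at fixed $\tau$, then a differential identity in $\tau$.

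\emph{Step 1 (static inequality).} Fix $\tau$ and a real number $w$. Apply \eqref{HS-sumrule-discrete} to each $G_\alpha$ with $J=\{\lambda_j\le w\}$ and $z=w$, then sum over $\alpha$. On the right-hand side every term has $(w-\lambda_j)\ge 0$, $(w-\lambda_k)\le 0$ and $\lambda_k-\lambda_j\ge 0$, so the right-hand side is $\le 0$. On the left-hand side, the normalization \eqref{eq-energy-bounds-on-commutators-1} turns the first line into $\sum_j (w-\lambda_j)_+^2$, up to the fixed numerical factor built into \eqref{eq-energy-bounds-on-commutators-1}. The second line is bounded using \eqref{eq-energy-bounds-on-commutators-2}. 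This should give
\begin{equation*}
\sum_j (w-\lambda_j)_+^2\;\le\;\sum_j (w-\lambda_j)_+\bigl(\eta\lambda_j+\theta\dot\lambda_j\bigr).
\end{equation*}
I would check that the constants match exactly, which is precisely what the normalization ``$=1$'' in \eqref{eq-energy-bounds-on-commutators-1} is designed to ensure. The sums are finite because the spectrum is discrete and bounded below.

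\emph{Step 2 (differentiation).} Put $F(\tau)=A^{-2}\sum_j(zB-\lambda_j)_+^2$ and $w=zB(\tau)$. The map $x\mapsto x_+^2$ is $C^1$ and the $\lambda_j$ are analytic, so $F$ is differentiable. Using $\dot A/A=-(1+\eta)/\theta$ and $\dot B=-\eta B/\theta$ gives
\begin{equation*}
\dot F=2A^{-2}\Bigl[\tfrac{1+\eta}{\theta}\sum_j(w-\lambda_j)_+^2-\sum_j(w-\lambda_j)_+\bigl(\tfrac{\eta}{\theta}w+\dot\lambda_j\bigr)\Bigr].
\end{equation*}
Splitting $\eta w=\eta(w-\lambda_j)+\eta\lambda_j$ cancels the $\eta$-part of the first sum, and we get
\begin{equation*}
\dot F=\frac{2A^{-2}}{\theta}\Bigl[\sum_j(w-\lambda_j)_+^2-\sum_j(w-\lambda_j)_+(\eta\lambda_j+\theta\dot\lambda_j)\Bigr].
\end{equation*}
By Step 1 the bracket is $\le 0$. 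Hence $\dot F\le0$ if $\theta>0$ and $\dot F\ge 0$ if $\theta<0$, which is the claim. The hypotheses $1+\eta>0$ and $\theta\ne0$ are only needed so that the ODEs \eqref{eq-energy-bounds-on-commutators-ODEs} have positive solutions with continuous coefficients.

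\emph{Main obstacle.} The calculus itself is routine; the delicate points are technical. First, the eigenvalue branches may cross, and the increasing labelling may then differ from the analytic labelling. The sum $\sum_j(w-\lambda_j)_+^2$ is symmetric in the labels, so I would evaluate $\dot F$ using the analytic branches. I would also check that at crossing points \eqref{eq-energy-bounds-on-commutators-2} holds for the analytic branch eigenvectors, by taking these as the $\phi_j$ in the sum rule. Second, $F$ is only $C^1$ where some $\lambda_j(\tau)=zB(\tau)$; because $x_+^2$ is $C^1$, monotonicity still follows from the sign of $\dot F$ on the whole interval.
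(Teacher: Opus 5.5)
Your proposal is correct and follows the paper's proof essentially line by line: the quadratic sum rule with $J=\{\lambda_j\le zB(\tau)\}$ gives the static inequality $\sum_j(w-\lambda_j)_+^2\le\sum_j(w-\lambda_j)_+(\eta\lambda_j+\theta\dot\lambda_j)$, and differentiating the Riesz mean using the ODEs for $A,B$ gives $\dot F=\frac{2}{A^2\theta}$ times exactly that nonpositive bracket. One small correction to your closing remark in Step 2: $A$ and $B$ are positive for any continuous coefficients, since they are exponentials of primitives, so the hypothesis $1+\eta>0$ is not actually used anywhere in the argument.
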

  	
 A natural question is to ask about higher derivatives of eigenvalues. There are only few results in the literature about global concavity/convexity properties of eigenvalues beyond (second order) perturbation theory except for finite dimensional matrices. For example, a well known result for hermitian matrices is that the sum over the lowest eigenvalues is a concave function on the space of hermitian matrices (see e,g.\cite{LiebSie1991}, \cite{Kos2000}). Similarly, in quantum mechanics the lowest eigenvalue of a Schr\"{o}dinger operator $H=H_0+\tau H_1$ given by the variational principle is a concave function of $\tau$ (\cite{Thi2002}). This easily extends to the sum of the lowest eigenvalues and to Schr\"{o}dinger operators satisfying $\ddot{H}\leq 0$ in the sense of operators (see also \cite{Kos2000}).  A non-perturbative and optimal result with applications to atomic spectra proving convextiy/concavity properties of lowest eigenvalues for given angular momentum for a spherically symmetric potential $V$ with respect to the corresponding eigenvalues of the hydrogen atom when the Laplacian of $V$ outside the origin has a given sign can be found in \cite{MarStu1991}.
 
 As second order perturbation theory indicates, we cannot expect an analogous simple identity as \eqref{Feynman-Hellmann theorem-1} for $\ddot{\lambda}_{j,\tau}$, that is $	\ddot{\lambda}_{j,\tau}=\langle u_{j,\tau},\ddot{H}_{\tau} u_{j,\tau}\rangle$ except if $H_{\tau}$ commutes with $\dot{H}_{\tau}$. Indeed taking the derivative in the Feynman-Hellman theorem \eqref{Feynman-Hellmann theorem-1} we get the second derivative formula
 
 \begin{equation}\label{Feynman-Hellmann theorem-2nd-derivative}
 	\ddot{\lambda}_{j,\tau}=\langle u_{j,\tau},\ddot{H}_{\tau} u_{j,\tau}\rangle+2\langle \dot{u}_{j,\tau},\dot{H}_{\tau} u_{j,\tau}\rangle
 \end{equation}
 which we write for convenience as
 \begin{equation}\label{Feynman-Hellmann theorem-2nd-derivative-short}
 	\ddot{\lambda}_{j}=\langle u_{j},\ddot{H} u_{j}\rangle+2\langle \dot{u}_{j},\dot{H} u_{j}\rangle.
 \end{equation}
 This equation is the key to the main result of this paper presenting a second order Feynman-Hellmann theorem as a sum rule expanding the scalar product via the completeness relation as we do in second order perturbation theory. 
 
 \begin{thm} \label{thm-second order Feynman-Hellmann theorem} Let $H=H_{\tau}$, $\tau$ real,  be an analytic one parameter family of self-adjoint operators on a  Hilbert space $(\mathcal{H},\langle,\rangle)$ with common domain $\mathcal{D}_H$ having purely discrete spectrum consisting of eigenvalues $\lambda_j=\lambda_{j,\tau}$, $j=1,2,\cdots$, analytic in $\tau$, with corresponding eigenvectors $u_j=u_{j,\tau}$ forming a complete orthonormal basis of the underlying Hilbert space. We also suppose that the domains of the operator derivatives $\dot{H}$ and $\ddot{H}$ contain $\mathcal{D}_H$ and map into $\mathcal{D}_H$.  For any function $f:\mathbb{R}\longrightarrow\mathbb{R}$ and any interval $I$ for which eigenvalues and eigenfunctions are analytic in $\tau$:
 	\begin{equation}\label{Feynman-Hellmann-sum-rule-general}
 		\sum_{\lambda_j} f(\lambda_j)(\ddot{\lambda}_j -\langle \ddot{H}u_j,u_j\rangle)=\underset{\lambda_j\neq\lambda_k}{\sum\sum}\frac{f(\lambda_k)-f(\lambda_j)}{\lambda_k-\lambda_j}\big|\langle u_k,\dot{H}u_j\rangle\big|^2
 	\end{equation}
 	provided all sums which are taken over the full spectrum are finite. 
 	
 	When $f$ is a $C^1$ function such that $f'$ is concave then for any subset $J$ of the spectrum
 	\begin{equation}\label{Feynman-Hellmann-sum-rule-general-ineq-f-prime-concave}
 		\begin{split}
 			\sum_{\lambda_j\in J} f(\lambda_j)\ddot{\lambda}_j +f'(\lambda_j)\dot{\lambda}_j^2&\geq  \sum_{\lambda_j\in J} f(\lambda_j)\langle \ddot{H}u_j,u_j\rangle+ f'(\lambda_j)\langle \dot{H}u_j,\dot{H}u_j\rangle\\
 			&-\sum_{\lambda_j\in J}\sum_{\lambda_k\notin J}\frac{2f(\lambda_j)+f'(\lambda_j)(\lambda_k-\lambda_j)}{\lambda_k-\lambda_j}\big|\langle u_k,\dot{H}u_j\rangle\big|^2\\
 		\end{split}
 	\end{equation}
 	with equality when $f'$ is a linear function. In particular, for all $z=z(\tau)$
 	\begin{equation}\label{Feynman-Hellmann-sum-rule-quadratic}
 		\begin{split}
 			&\sum_{\lambda_j\in J} (z-\lambda_j)^2(\langle \ddot{H}u_j,u_j\rangle-\ddot{\lambda}_j) -2(z-\lambda_j)(\langle \dot{H}u_j,\dot{H}u_j\rangle-\dot{\lambda}_j^2)\\
 			&=2\sum_{\lambda_j\in J}\sum_{\lambda_k\notin J}\frac{(z-\lambda_j)(z-\lambda_k)}{\lambda_k-\lambda_j}\big|\langle \dot{H}u_j,u_k\rangle\big|^2.\\
 		\end{split}
 	\end{equation}
 \end{thm}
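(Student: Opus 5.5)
The plan is to start from the second derivative formula \eqref{Feynman-Hellmann theorem-2nd-derivative-short}, $\ddot{\lambda}_j-\langle \ddot{H}u_j,u_j\rangle=2\,\mathrm{Re}\langle \dot{u}_j,\dot{H}u_j\rangle$, and to compute the right-hand side by expanding $\dot u_j$ in the basis $u_k$. Differentiating $H u_j=\lambda_j u_j$ gives $(H-\lambda_j)\dot u_j=(\dot\lambda_j-\dot H)u_j$. Taking the inner product with $u_k$ and using self-adjointness yields, for $\lambda_k\neq\lambda_j$,
\[
\langle u_k,\dot u_j\rangle=\frac{\langle u_k,\dot H u_j\rangle}{\lambda_j-\lambda_k}.
\]
In degenerate situations on $I$, the analytic eigenvectors of Kato–Rellich diagonalize $\dot H$ within each eigenspace. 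The components along the same eigenspace then contribute only real or cancelling terms. Indeed, $\mathrm{Re}\langle u_j,\dot u_j\rangle=0$ by normalization, and the cross terms $\langle u_k,\dot Hu_j\rangle$ with $\lambda_k=\lambda_j$, $k\ne j$, vanish for the analytic choice. Completeness then gives
\[
\ddot\lambda_j-\langle\ddot H u_j,u_j\rangle=2\sum_{\lambda_k\neq\lambda_j}\frac{|\langle u_k,\dot Hu_j\rangle|^2}{\lambda_j-\lambda_k}.
\]
This is the classical second-order perturbation formula.

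For \eqref{Feynman-Hellmann-sum-rule-general}, I multiply by $f(\lambda_j)$ and sum over $j$. Since everything is assumed finite, I may symmetrize the double sum by swapping $j\leftrightarrow k$, using $|\langle u_k,\dot Hu_j\rangle|=|\langle u_j,\dot Hu_k\rangle|$. The sum becomes $\sum\sum\frac{f(\lambda_j)-f(\lambda_k)}{\lambda_j-\lambda_k}|\cdot|^2$, which is the claimed identity (the factor 2 is absorbed by the symmetrization).

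For the inequality \eqref{Feynman-Hellmann-sum-rule-general-ineq-f-prime-concave}, I restrict the sum to $j\in J$ and split the inner sum into $k\in J$ and $k\notin J$. On the $J\times J$ part, I symmetrize as above to get $\sum\sum_{J\times J,\lambda_j\ne\lambda_k}\frac{f(\lambda_k)-f(\lambda_j)}{\lambda_k-\lambda_j}|\cdot|^2$, with the sign matching the statement's convention. I then compare the divided difference with $\tfrac12(f'(\lambda_j)+f'(\lambda_k))$. If $f'$ is concave, the trapezoid inequality $\frac1{b-a}\int_a^b f'\ge \frac{f'(a)+f'(b)}2$ gives a one-sided bound, with equality when $f'$ is affine. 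The symmetric expression $\tfrac12(f'(\lambda_j)+f'(\lambda_k))|\cdot|^2$ summed over $J\times J$ equals $\sum_{j\in J}f'(\lambda_j)\sum_{k\in J,\lambda_k\ne\lambda_j}|\langle u_k,\dot Hu_j\rangle|^2$.

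By Parseval, the inner sum is $\|\dot Hu_j\|^2-\dot\lambda_j^2-\sum_{k\notin J}|\langle u_k,\dot H u_j\rangle|^2$. The diagonal term is $\dot\lambda_j^2$ by \eqref{Feynman-Hellmann theorem-1}, and the same-eigenspace off-diagonal terms vanish. Collecting the $k\notin J$ terms, namely $2f(\lambda_j)/(\lambda_j-\lambda_k)$ from the first step and $-f'(\lambda_j)$ from Parseval, produces exactly the stated remainder $-\frac{2f(\lambda_j)+f'(\lambda_j)(\lambda_k-\lambda_j)}{\lambda_k-\lambda_j}$. Rearranging gives \eqref{Feynman-Hellmann-sum-rule-general-ineq-f-prime-concave}.

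Finally, \eqref{Feynman-Hellmann-sum-rule-quadratic} is the equality case with $f(\lambda)=-(z-\lambda)^2$, so that $f'=2(z-\lambda)$ is linear. This is valid pointwise in $\tau$, so $z=z(\tau)$ is allowed. It remains to check the algebra $2f(\lambda_j)+f'(\lambda_j)(\lambda_k-\lambda_j)=-2(z-\lambda_j)(z-\lambda_k)$.

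The main obstacle is the sign bookkeeping. One must check carefully which direction concavity of $f'$ gives for the divided difference against the trapezoid average, since the sign depends on the ordering conventions in \eqref{Feynman-Hellmann-sum-rule-general}. A second, more technical point is justifying the handling of degenerate eigenvalues via the analytic (Kato) eigenbasis and interchanging the infinite sums under the finiteness assumption.
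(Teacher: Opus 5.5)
Your proposal is correct and follows the paper's proof essentially step for step. Like you, the paper derives the gap identity $\langle \dot H u_j,u_k\rangle=(\lambda_j-\lambda_k)\langle \dot u_j,u_k\rangle+\dot\lambda_j\delta_{jk}$; it gets it by differentiating $\langle Hu_j,u_k\rangle=\lambda_j\delta_{jk}$ rather than the eigenvalue equation. It then expands $\langle\dot u_j,\dot H u_j\rangle$ by completeness to get the second-order perturbation formula, symmetrizes after multiplying by $f(\lambda_j)$, and splits off the $J\times J^c$ terms. The last step uses $\frac{f(\lambda_k)-f(\lambda_j)}{\lambda_k-\lambda_j}\ge\frac{f'(\lambda_j)+f'(\lambda_k)}{2}$ for concave $f'$ together with Parseval. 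Your choice $f=-(z-\lambda)^2$ instead of the paper's $f=(z-\lambda)^2$ is immaterial, since the equality case is linear in $f$. The sign direction you flagged as the main obstacle is already the correct one as you stated it. Your use of $2\,\mathrm{Re}\langle\dot u_j,\dot Hu_j\rangle$, and of the vanishing of same-eigenspace cross terms (which follows from the same gap identity), is slightly more careful than the paper's appeal to $\langle\dot u_j,u_j\rangle=0$.
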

 We would like to add the following comments on our main result.
 \begin{rem} The proof of the theorem will essentially follow the proof of the quadratic sum rule \eqref{HS-sumrule-discrete} given in \cite{HarStu2010} or \cite{HarStu2011} but here based on a gap formula for $\dot{H}$ and not the commutator $[G,H]$ as for \eqref{HS-sumrule-discrete}. At least formally, one may view $\dot{H}$ as commutator of the parameter differentiation $d/d\tau$ with $H$ when acting on eigenfunctions. Therefore the complement $J^c$ of $J$ may also contain continuous spectrum and the sum in $J^c$ will be replaced by the spectral integral as in \cite{HarStu2010}. 
 \end{rem}
 \begin{rem} In some applications with non-degenerate eigenvalues we wish to label the eigenvalues in increasing order so that we have to relabel the eigenvalues at crossing points according to the discussion above. When $J$ is a proper subset of the spectrum the theorem only holds as long as no eigenvalue is leaving or entering $J$. However, we may stop at such values of $\tau$ and restart with the new number of eigenvalues in $J$ (compare also \cite{Stu2010}).
 \end{rem}
 \begin{rem} For any primitive $F$ of $f$ the l.h.s. of inequality \eqref{Feynman-Hellmann-sum-rule-general-ineq-f-prime-concave} is equal to $\displaystyle  \sum_{\lambda_j\in J} \ddot{F}(\lambda_j)$ while the expectation values on the right hand side do not correspond to the expectation values of $\ddot{F}(H)$ since the chain rule for matrices contains commutators of $H$ and $\dot{H}$. 
 \end{rem}
 \begin{rem} In applications it may be also advantageous to consider the shifted operators $H_z(\tau):=H(\tau)-z(\tau)$ where $z(\tau)$ is an analytic function of $\tau$, with emphasis on the choice $z=\lambda_{m}$ for some eigenvalue of $H$. the eigenvalues are then $\lambda_{z,j}=\lambda_{j}-z$ and therefore the expressions $\langle \ddot{H}u_j,u_j\rangle-\ddot{\lambda}_j)$ and $\langle \dot{H}u_j,\dot{H}u_j\rangle-\dot{\lambda}_j^2$ remain invariant under this transformation.
 \end{rem}
 \begin{rem} We will also show that the quadratic second order Feynman-Hellmann relation \eqref{Feynman-Hellmann-sum-rule-quadratic} implies the quadratic sum rule \eqref{HS-sumrule-discrete} for self-adjoint $G$ and more generally \eqref{Feynman-Hellmann-sum-rule-general} implies the abstract sum rule stated in \cite{HarStu2010}. Theorem \ref{thm-2nd-order-Feynman-Hellmann-implication of HS sum rule} in section 3 below.
 \end{rem}
 \begin{rem} Comparing the coefficient of $z^2$ in \eqref{Feynman-Hellmann-sum-rule-quadratic} we have the identity or choosing $f=1$ in \eqref{Feynman-Hellmann-sum-rule-general}we get
 	\begin{equation}\label{Feynman-Hellmann-sum-rule-constant}
 		\sum_{\lambda_j\in J} \ddot{\lambda}_j  = \sum_{\lambda_j\in J} \langle \ddot{H}u_j,u_j\rangle
 		+2\sum_{\lambda_j\in J}\sum_{\lambda_k\notin J}(\lambda_j-\lambda_k)^{-1}\big|\langle \dot{H} u_j,u_k\rangle\big|^2
 	\end{equation}
 which corresponds to the expression derived in second order perturbation theory (see e.g. \cite{Sch1955}, eq. (25.12), p.153 or eq.(25.14) p.154).
 \end{rem}
 
 From \eqref{Feynman-Hellmann-sum-rule-constant} we will prove the following result for the bottom of the spectrum $\sigma(H(\tau))$ of a family of semi-bounded operators $H(\tau)$:
 
  \begin{thm}\label{thm-2nd-order-Feynman-Hellmann-squeezing}
  Let $d=d(\tau):=\inf J^c-\sup J>0$ and suppose that $\dot{H} u_j$, $\dot{H}^2 u_j$ are in the domain of $H$. Then
  \begin{equation}\label{Feynman-Hellmann-sum-rule-constant-squeezing}
  0\leq \sum_{\lambda_j\in J} \langle \ddot{H}u_j,u_j\rangle	- \sum_{\lambda_j\in J} \ddot{\lambda}_j  
  	\leq d_m^{-2}\sum_{\lambda_j\in J}\langle [\dot{H},[H,\dot{H}]] u_j,u_j\rangle
  \end{equation}
  where $[A,B]=AB-BA$ denotes the commutator of $A$ and $B$. In particular, when $\dot{H}$ commutes with $H$ we have
  \begin{equation}
  	\sum_{\lambda_j\in J} \langle \ddot{H}u_j,u_j\rangle - \sum_{\lambda_j\in J} \ddot{\lambda}_j =0
  \end{equation} 
  and consequently $\ddot{\lambda}_j=\langle \ddot{H}u_j,u_j\rangle$ for all $j$.
   \end{thm}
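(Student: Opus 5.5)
Starting from \eqref{Feynman-Hellmann-sum-rule-constant}, I would rewrite it as
\begin{equation*}
\sum_{\lambda_j\in J} \langle \ddot{H}u_j,u_j\rangle-\sum_{\lambda_j\in J}\ddot{\lambda}_j=2\sum_{\lambda_j\in J}\sum_{\lambda_k\notin J}\frac{|\langle \dot{H}u_j,u_k\rangle|^2}{\lambda_k-\lambda_j}.
\end{equation*}
For $\lambda_j\in J$ and $\lambda_k\notin J$ we have $\lambda_k-\lambda_j\geq d>0$, so every term on the right is non-negative. This gives the lower bound $0$ at once.

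For the upper bound I would use $\frac{1}{\lambda_k-\lambda_j}\leq d^{-2}(\lambda_k-\lambda_j)$, which holds because $(\lambda_k-\lambda_j)^2\geq d^2$. Hence the right-hand side is at most
\begin{equation*}
2d^{-2}\sum_{\lambda_j\in J}\sum_{\lambda_k\notin J}(\lambda_k-\lambda_j)|\langle \dot{H}u_j,u_k\rangle|^2 .
\end{equation*}
I then want to compare this with the full TRK-type sum over all $k$. Applying \eqref{TRK-abstract} with the self-adjoint operator $G=\dot{H}$, and using the hypothesis that $\dot{H}u_j$ and $\dot H^2u_j$ lie in the domain of $H$ so that the double commutator is defined on $u_j$, gives for each $j$
\begin{equation*}
\langle[\dot{H},[H,\dot{H}]]u_j,u_j\rangle=2\sum_{k}(\lambda_k-\lambda_j)|\langle \dot{H}u_j,u_k\rangle|^2 .
\end{equation*}
Summing over $j\in J$, the part with $k\in J$ cancels by antisymmetry: the factor $(\lambda_k-\lambda_j)$ is antisymmetric in $(j,k)$, and $|\langle\dot H u_j,u_k\rangle|=|\langle\dot H u_k,u_j\rangle|$ by self-adjointness. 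What remains is
\begin{equation*}
\sum_{\lambda_j\in J}\langle[\dot{H},[H,\dot{H}]]u_j,u_j\rangle=2\sum_{\lambda_j\in J}\sum_{\lambda_k\notin J}(\lambda_k-\lambda_j)|\langle \dot{H}u_j,u_k\rangle|^2 ,
\end{equation*}
which is exactly the quantity bounded above, and so \eqref{Feynman-Hellmann-sum-rule-constant-squeezing} follows with $d_m=d$. The step that needs care is justifying this cancellation when $J$ is infinite. There I would require absolute convergence, or else restrict to finite $J$ as in the bottom-of-the-spectrum setting.

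For the commuting case, $[H,\dot H]=0$ makes the upper bound vanish, so the sum over $J$ of the differences is zero. The individual identity $\ddot\lambda_j=\langle\ddot H u_j,u_j\rangle$ needs no gap condition. Commutation gives $\langle \dot Hu_j,u_k\rangle(\lambda_k-\lambda_j)=\langle[H,\dot H]u_j,u_k\rangle=0$, so $\langle \dot H u_j,u_k\rangle=0$ whenever $\lambda_k\neq\lambda_j$. Then \eqref{Feynman-Hellmann-sum-rule-general}, applied with $f$ the indicator of the single value $\lambda_j$, or directly \eqref{Feynman-Hellmann-sum-rule-constant} with $J=\{\lambda_j\}$, gives the claim for each $j$.
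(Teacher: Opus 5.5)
Your proposal is correct and follows the paper's own proof: start from \eqref{Feynman-Hellmann-sum-rule-constant}, use $(\lambda_k-\lambda_j)^{-1}\le d^{-2}(\lambda_k-\lambda_j)$, and evaluate the remaining double sum by summing the TRK rule \eqref{TRK-abstract} with $G=\dot H$ over $J$, where the $k\in J$ terms cancel by antisymmetry. Your bookkeeping of the factor $2$ is cleaner than the paper's intermediate display, which drops that factor but reaches the same final constant $d^{-2}$. Your direct argument that $[H,\dot H]=0$ forces $\langle \dot H u_j,u_k\rangle=0$ for $\lambda_k\neq\lambda_j$ also supplies the per-eigenvalue conclusion, which the paper only asserts.
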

   
   \begin{rem} For $2\times 2$ matrices $H$, $d=\lambda_2-\lambda_1>0$ the second inequality in \eqref{Feynman-Hellmann-sum-rule-constant-squeezing} is always an equality.
   \end{rem}
 The paper is organized as follows: In section 2 we prove theorem \ref{thm-Feynman-Hellmann theorem in quadratic sum rules} on the monotonicity of quadratic Riesz-means and show applications to Lieb-Thirring inequalities for Schr\"{o}dinger operators. In section 3 we prove the main results about the second-order Feynman-Hellmann sumr rules and show further consequences within the abstract setting. In section 4 we apply the sum rules to hermitian matrices, in particular to the linear combination $H=(1-t)A+tB$,  discussing known results in our framework and showing new inequalities for eigenvalues and traces of functions of $H$. In section 5 we illustrate our abstract results studying the one-parameter family of eigenvalue problems given by
 \begin{equation}
 	H=-\frac{d^2}{dx^2}-\frac{1}{4x^2}+\frac{\nu^2}{x^2}
 \end{equation}
 on $[0,1]$ with Dirichlet boundary conditions and $\nu\geq 0$. Its eigenvalues are given by the squares of the zeros $j_{\nu,k}$ of the Bessel function $J_{\nu}(x)$. We present some new concavity properties and new sharp inequalities on the spacings of $j_{\nu,k}^2$ and derivatives of moments of $j_{\nu,k}$.
 
\section{The Feynman-Hellman theorem in sum rules}
We start with the proof of theorem \ref{thm-Feynman-Hellmann theorem in quadratic sum rules}:
\begin{proof}[Proof of theorem \ref{thm-Feynman-Hellmann theorem in quadratic sum rules}]
	We apply the quadratic sum rule \eqref{HS-sumrule-discrete} to each $G_{\alpha}$ and to the spectral subset
	$J=\{\lambda_j: \lambda_j\leq z\}$ for $z\in\mathbb{R}$. The right hand side of \eqref{HS-sumrule-discrete}  is $\leq 0$.
	Summing over $\alpha$ and using the conditions eqs. \eqref{eq-energy-bounds-on-commutators-1},  \eqref{eq-energy-bounds-on-commutators-2} we get the inequality
	\begin{equation}
		\sum_j(z-\lambda_j(\tau))_{+}^2-(z-\lambda_j(\tau))_{+}(\eta(\tau)\lambda_j+ \theta(\tau)\dot{\lambda}_j)\leq 0
	\end{equation}
	for all real $z$. On the other hand, we have
	\begin{equation*}
		\begin{split}
		&\frac{d}{d\tau}\sum_jA^{-2}(\tau)(zB(\tau)-\lambda_j(\tau))_{+}^2\\
		&=-2\sum_j\dot{A}(\tau)A^{-3}(\tau)(zB(\tau)-\lambda_j(\tau))_{+}^2-A^{-2}(\tau)(zB(\tau)-\lambda_j(\tau))_{+}(z\dot{B}(\tau)-\dot{\lambda}_j(\tau).\\
		\end{split}
	\end{equation*}
	inserting the differential equations for $A$ and $B$ we get (suppressing the argument $\tau$)
	\begin{equation*}
		\begin{split}
			&\frac{d}{d\tau}\sum_jA^{-2}(\tau)(zB-\lambda_j)_{+}^2\\
			&=\frac{2}{A^2\theta}\sum_j(1+\eta)(zB-\lambda_j)_{+}^2-(zB-\lambda_j)_{+}(zB\eta+\theta \dot{\lambda}_j)\\
			&=\frac{2}{A^2\theta}\sum_j(zB-\lambda_j)_{+}^2-(zB-\lambda_j)_{+}(\eta \lambda_j+\theta \dot{\lambda}_j)\\
		\end{split}
	\end{equation*}
	proving the theorem.
\end{proof}
\begin{rem}
	According to \cite{HarStu2010} the theorem also holds in the presence of continuous spectrum and eigenvalues below the continuous spectrum.
\end{rem}
We also note the following corollary for the negative eigenvalues of $H$:
\begin{cor}
	Under the assumptions of theorem \ref{thm-Feynman-Hellmann theorem in quadratic sum rules} we have
	\begin{equation}
			\begin{split}
	\frac{d}{d\tau}\sum_jA^{-2}(-\lambda_j)_{+}^2\leq 0&\quad\text{if $\theta> 0$ }\\
	\frac{d}{d\tau}\sum_jA^{-2}(-\lambda_j)_{+}^2\geq 0&\quad\text{if $\theta< 0$ }
		\end{split}
	\end{equation}
\end{cor}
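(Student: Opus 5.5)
The corollary is the special case $z=0$ of Theorem \ref{thm-Feynman-Hellmann theorem in quadratic sum rules}, so I would deduce it directly from the differentiated identity in the proof above. At $z=0$ the quantity in \eqref{Riesz-mean-monotonicity} is
\[
A(\tau)^{-2}\sum_j(0\cdot B(\tau)-\lambda_j(\tau))_+^2=A(\tau)^{-2}\sum_j(-\lambda_j(\tau))_+^2 .
\]
The function $B$ plays no role here: the factor $z\dot B$ vanishes identically, so no property of $B$ is needed.

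Concretely, I would set $z=0$ in the computation of $\frac{d}{d\tau}\sum_jA^{-2}(zB-\lambda_j)_+^2$ and insert $\dot A=-\frac{1+\eta}{\theta}A$. This gives
\[
\frac{d}{d\tau}\sum_jA^{-2}(-\lambda_j)_+^2=\frac{2}{A^2\theta}\sum_j\Big[(-\lambda_j)_+^2-(-\lambda_j)_+(\eta\lambda_j+\theta\dot\lambda_j)\Big].
\]
To see this, note that the term $(1+\eta)(-\lambda_j)_+^2$ combines with $-(-\lambda_j)_+\eta\cdot 0$. On the support of $(-\lambda_j)_+$ we have $(-\lambda_j)_+=-\lambda_j$, so $\eta(-\lambda_j)_+^2=-(-\lambda_j)_+\eta\lambda_j$.

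Next I would apply the quadratic sum rule \eqref{HS-sumrule-discrete} with $J=\{\lambda_j\le 0\}$ and $z=0$, summed over $\alpha$, together with \eqref{eq-energy-bounds-on-commutators-1} and \eqref{eq-energy-bounds-on-commutators-2}. This is exactly the inequality established in the proof of the theorem, now at $z=0$:
\[
\sum_j(-\lambda_j)_+^2-(-\lambda_j)_+(\eta\lambda_j+\theta\dot\lambda_j)\le 0 .
\]
Since $A^2>0$, the sign of the derivative is the opposite of the sign of $\theta$. It is therefore $\le0$ if $\theta>0$ and $\ge0$ if $\theta<0$.

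The only delicate point is differentiability. The map $x\mapsto x_+^2$ is $C^1$, and the eigenvalues are analytic on the intervals considered. If an eigenvalue crosses $0$, its contribution $(-\lambda_j)_+^2$ has zero derivative at the crossing, so the number of eigenvalues in $J$ changing causes no jump. Relabelling at degenerate crossings, as in the remarks following Theorem \ref{thm-second order Feynman-Hellmann theorem}, is handled as in the theorem itself.
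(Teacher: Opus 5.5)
Your proposal is correct and is essentially the paper's own argument. The corollary is just the case $z=0$ of Theorem~\ref{thm-Feynman-Hellmann theorem in quadratic sum rules}, where $B$ drops out and the sign of $\frac{d}{d\tau}\sum_jA^{-2}(-\lambda_j)_+^2=\frac{2}{A^2\theta}\sum_j\big[(-\lambda_j)_+^2-(-\lambda_j)_+(\eta\lambda_j+\theta\dot\lambda_j)\big]$ is fixed by the quadratic sum-rule inequality with $J=\{\lambda_j\le 0\}$, exactly as you do.
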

In the following we give some applications of these abstract results to Schr\"{o}dinger operators. For applications to the zeros of Bessel functions we refer to section 4 below.
\subsection{Lieb-Thirring inequalities for Schr\"{o}dinger operators}
We consider the eigenvalues $\lambda_j(\tau)$ of a one-parameter
family of Schr\"{o}dinger operators
\begin{equation}\label{Schrödinger operator H-tau on entire space}
	H(\tau)=-\tau\Delta+V(x)
\end{equation}
on $\mathbb{R}^d$ for constants $\tau>0$. As in \cite{Stu2010} we
suppose that $V(x)$ is a continuous function of compact support and we denote its negative part by $V_{-}(x)$. It is a well-known
fact (see e.g. \cite{BlSt1996}, \cite{FLW2022} and references therein) that
for all $\sigma\geq 0$
\begin{equation}\label{sc-limit}
	\underset{\tau\rightarrow 0+}{\lim}\tau^{\frac{d}{2}}\;
	\sum_{E_j(\tau)<0}(-E_j(\tau))^{\sigma}=L_{\sigma,d}^{cl}\int_{\mathbb{R}^d}V_{-}(x)^{\sigma+\frac{d}{2}}\;dx
\end{equation}
with $L_{\sigma,d}^{cl}$, called the classical constant, given by
\begin{equation}\label{sc-constant}
	L_{\sigma,d}^{cl}=(4\pi)^{-\frac{d}{2}}\frac{\Gamma(\sigma+1)}{\Gamma(\sigma+\frac{d}{2}+1)}.
\end{equation}
Lieb-Thirring inequalities are inequalities of the form
\begin{equation}\label{LT-ineq}
	\tau^{\frac{d}{2}}\;
	\sum_{E_j(\tau)<0}(-E_j(\tau))^{\sigma}\leq L_{\sigma,d}\int_{\mathbb{R}^d}V_{-}(x)^{\sigma+\frac{d}{2}}\;dx
\end{equation}
for some constant $L_{\sigma,d}\geq L_{\sigma,d}^{cl}$.
Let $G$ be the multiplication operator by the function $G=G(x)$. When $G(x)=e^{iqx}$ where $q\in \mathbb{R}$ with euclidean length $|q|\neq 0$, $G$ defines a bounded (even unitary) operator which maps the domain of $H$ into itself while in \cite{Stu2010} the operator $G$ was chosen to be the multiplication by a coordinate function which is obviously an unbounded operator.
With $G(x)=e^{iqx}$ we have the commutators
\begin{equation*}
	[G^*,[H,G]]=[G,[H,G^*]]=2\tau |q|^2
\end{equation*}
and
\begin{equation*}
\langle[H,G]\phi_j,[H,G]\phi_j\rangle =\langle[H,G^*]\phi_j,[H,G^*]\phi_j\rangle=\tau^2|q|^4+4\tau^2\langle q\nabla\phi_j,q\nabla\phi_j\rangle
\end{equation*}
for all eigenfunctions $\phi_j$. Therefore by the sum rule \eqref{HS-sumrule-discrete} (see also \cite{HarStu2011}, section 3, eq. (3.5)  )  we have
\begin{equation}\label{eq-sum-rule-Lieb-Thirring-JS-1}
\sum_{E_j(\tau)<0}(-E_j(\tau))^{2}-(-E_j(\tau))(\tau|q|^2+4\tau\langle q\nabla\phi_j,q\nabla\phi_j\rangle)\leq 0.
\end{equation}
Now let $e_{\alpha}$ the canonical orthonormal basis of $\mathbb{R}^d$. We choose
\begin{equation}
G_{\alpha}(x)=e^{i|q|e_{\alpha}x}.
\end{equation}
By eq. \eqref{eq-sum-rule-Lieb-Thirring-JS-1} after summing over $\alpha$ we obtain

\begin{equation}\label{eq-sum-rule-Lieb-Thirring-JS}
	\sum_{E_j(\tau)<0}(-E_j(\tau))^{2}-(-E_j(\tau))(\frac{\tau|q|^2}{d}+\frac{4\tau}{d}\langle \nabla\phi_j,\nabla\phi_j\rangle)\leq 0.
\end{equation}
Since by the Feynman-Hellman theorem $\dot{E}_j(\tau)=\langle \nabla\phi_j,\nabla\phi_j\rangle$ we apply theorem \ref{thm-Feynman-Hellmann theorem in quadratic sum rules} with $\eta=0$ and $\theta=-\,\frac{4\tau}{d}<0 $. Hence $A(\tau)=\tau^{-d/4}$ and therefore $\displaystyle \tau^{d/2}	\sum_{E_j(\tau)<0}(-E_j(\tau))^{2}$ is non-increasing in $\tau>0$ and by the semi-classical limit \eqref{sc-limit} we have the sharp Lieb-Thirring inequality \eqref{LT-ineq} for $\sigma=2$  and $L_{2,d}= L_{2,d}^{cl}$ and as shown in \cite{Stu2010} for all $\sigma\geq 2$.

\subsection{Lieb-Thirring inequalities for Schr\"{o}dinger operators with surface potentials on a hyperplane}
We consider the operator
\begin{equation}\label{H-lambda-hyperplane}
	H(\lambda v) u = -\Delta u\quad\text{in}\;\mathbb{R}^{d+1}_{+} := \{(x,
	y) : x \in \mathbb{R}^{d}, y > 0\}
\end{equation}
together with boundary conditions of the third type
\begin{equation}\label{H-bc-hyperplane}
	-\frac{\partial u}{\partial y}-\lambda vu=0 \quad
	\text{on}\;\mathbb{R}^{d}\times\{0\}
\end{equation}
where $v$ denotes a real-valued function on $\mathbb{R}^{d}$. As
indicated in \cite{FL2007} under the assumption that $v$ is
form-compact with respect to $\sqrt{-\Delta}$ in
$L^2(\mathbb{R}^{d})$, then $H(\tau v)$ can be defined as a
self-adjoint operator in $L^2(\mathbb{R}^{d+1}$ by means of the
quadratic form
\begin{equation}\label{H-form-hyperplane}
	\int_{\mathbb{R}^{d+1}_{+}}|\nabla_{x} u|^2+|\nabla_{y} u|^2\;dxdy-\tau\;\int_{\mathbb{R}^{d}}v(x)|u|^2(x,0)\;dx
\end{equation}
and in this case the negative spectrum of $H(\lambda v)$ consists
of eigenvalues $E_j(\tau)$ of finite multiplicities. We apply our theorem \ref{thm-Feynman-Hellmann theorem in quadratic sum rules} to prove the following Lieb-Thirring inequality:
\begin{thm} Let $v(x)$ be form-compact with respect to $\sqrt{-\Delta}$ in
	$L^2(\mathbb{R}^{d})$. Then for any $\sigma\geq 2$ the mapping
	\begin{equation}\label{diff-ineq-2}
		\lambda\mapsto\tau^{-4-d}\;
		\sum_{E_j(\tau)<0}(-E_j(\tau))^{2}
	\end{equation}
	is increasing for all $\tau>0$. Consequently
	\begin{equation}\label{LT-2-sharp}
		\sum_{E_j(\tau)<0}(-E_j(\tau))^{2}\leq
		L_{2,d}^{cl}\;\tau^{4+d}\;\int_{\mathbb{R}^d}v_{+}(x)^{4+d}\;dx
	\end{equation}
	for all $\tau>0$.
\end{thm}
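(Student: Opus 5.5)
The plan is to apply Theorem \ref{thm-Feynman-Hellmann theorem in quadratic sum rules}, in the form of the corollary for negative eigenvalues (i.e.\ with $z=0$), to the family $H(\tau v)$. As commutator operators I take the multiplications by the tangential coordinates, $G_\alpha=x_\alpha$, $\alpha=1,\dots,d$. One could instead use the bounded choice $e^{i|q|e_\alpha\cdot x}$, but it produces an extra $|q|^4$ term that is not of the form $\eta\lambda_j+\theta\dot\lambda_j$. Since $x_\alpha$ does not depend on $y$ and commutes with multiplication by $v(x)$, it preserves the Robin condition \eqref{H-bc-hyperplane}. We then compute $[H,x_\alpha]=-2\partial_{x_\alpha}$ and $[x_\alpha,[H,x_\alpha]]=2$. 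Hence
\[
\sum_\alpha \tfrac12\big(\langle[G_\alpha^*,[H,G_\alpha]]\phi_j,\phi_j\rangle+\langle[G_\alpha,[H,G_\alpha^*]]\phi_j,\phi_j\rangle\big)=2d,
\]
and $\sum_\alpha 2\|[H,x_\alpha]\phi_j\|^2=8T_x(\phi_j)$, where $T_x=\int|\nabla_x\phi_j|^2$. Dividing the sum rule by $2d$ normalizes \eqref{eq-energy-bounds-on-commutators-1} to $1$. The right-hand side of \eqref{eq-energy-bounds-on-commutators-2} then becomes $\frac4d T_x(\phi_j)$.

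The key step is to express $T_x(\phi_j)$ through $E_j$ and $\dot E_j$ alone. Write $T_y=\int|\partial_y\phi_j|^2$ and $S=\int v|\phi_j(x,0)|^2dx$, so that $E_j=T_x+T_y-\tau S$.
\begin{itemize}
\item Feynman--Hellmann applied to the form \eqref{H-form-hyperplane} gives $\dot E_j=-S$.
\item A virial identity in the normal direction gives a second relation. The dilation $u\mapsto s^{1/2}u(x,sy)$ is unitary on $L^2(\mathbb{R}^{d+1}_+)$ and keeps the form domain invariant. It leaves $T_x$ unchanged, multiplies $T_y$ by $s^2$, and multiplies the surface term by $s$. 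Stationarity of the Rayleigh quotient at $s=1$ for the eigenfunction $\phi_j$ therefore yields $2T_y=\tau S$.
\end{itemize}
Combining the three relations gives $T_x=E_j+\tfrac{\tau}{2}S=E_j-\tfrac{\tau}{2}\dot E_j$. So \eqref{eq-energy-bounds-on-commutators-2} holds with equality, with $\eta=\frac4d$ (so $1+\eta>0$) and $\theta=-\frac{2\tau}{d}<0$.

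Solving \eqref{eq-energy-bounds-on-commutators-ODEs} gives $\dot A=\frac{d+4}{2\tau}A$ and $\dot B=\frac{2}{\tau}B$, hence $A=\tau^{(d+4)/2}$ and $B=\tau^2$. Because $\theta<0$, the corollary with $z=0$ shows that $\tau^{-(4+d)}\sum_j(-E_j(\tau))_+^2$ is non-decreasing, which is \eqref{diff-ineq-2}. Bound \eqref{LT-2-sharp} then follows by letting $\tau\to\infty$ and invoking the semiclassical asymptotics for surface potentials (as in \cite{FL2007}):
\[
\tau^{-(2\sigma+d)}\sum_j(-E_j)^\sigma\to L^{cl}_{\sigma,d}\int v_+^{2\sigma+d}.
\]
Heuristically, each point $x$ carries a bound state $e^{-\tau v(x)y}$ of energy $-\tau^2v(x)^2$, with phase-space density proportional to $\tau^dv^d$, so the exponent with $\sigma=2$ is $4+d$. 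The extension to $\sigma\ge2$ would follow by the standard Aizenman--Lieb lifting, as in \cite{Stu2010}.

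I expect the main obstacle to be the justification rather than the algebra. The sum rule \eqref{HS-sumrule-discrete} must be legitimized for the unbounded $G_\alpha=x_\alpha$, whose commutators must act on eigenfunctions of a form-defined operator with continuous spectrum above $0$; this should follow from \cite{HarStu2010} together with exponential decay of the negative-energy eigenfunctions. The virial identity must be justified at the level of quadratic forms, given only that $v$ is form-compact. Finally, crossings and eigenvalues emerging from $0$ as $\tau$ varies have to be handled by the restart argument of the remarks following Theorem \ref{thm-second order Feynman-Hellmann theorem}. Since $(-E_j)_+^2$ vanishes to second order at threshold, an eigenvalue entering at $0$ causes no jump, so monotonicity persists across such values of $\tau$.
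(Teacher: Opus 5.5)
Your proof is correct and follows the paper's proof: the sum rule with tangential multipliers, then Feynman--Hellmann combined with the $y$-virial identity to get $\int|\nabla_x\phi_j|^2=E_j-\frac{\tau}{2}\dot E_j$, then the monotonicity (your $\eta=\frac{4}{d}$, $\theta=-\frac{2\tau}{d}$, $A=\tau^{(d+4)/2}$, $B=\tau^2$ reproduce exactly the paper's differential inequality for $\tau^{-(4+d)}\sum_j(-E_j)^2$), and finally the limit $\tau\to\infty$. The only difference is your choice $G_\alpha=x_\alpha$, and your reason for rejecting the bounded $e^{i|q|e_\alpha\cdot x}$ used in the paper is not an obstruction: after normalization the extra term only adds $-|q|^2\sum_j(-E_j)_+$ to the summed inequality, so it disappears as $|q|\to0$ (and the monotonicity argument only uses the summed inequality), which also removes the domain justification for unbounded multipliers that you name as your main obstacle.
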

\begin{proof} As before we apply the sum rule \eqref{HS-sumrule-discrete}  to $G_{\alpha}(x)=e^{i|q|e_{\alpha}x}$, $\alpha=1,\ldots,d$. After
	averaging over $\alpha$ we get
	\begin{equation}\label{Riesz-mean-ineq-1}
		\sum_{E_j(\tau)<0}(-E_j(\lambda))^{2}-\frac{4}{d}\sum_{E_j(\tau)<0}(-E_j(\tau)) \int_{\mathbb{R}^{d+1}_{+}}|\nabla_{x}
		\phi_j|^2\;dxdy\leq 0
	\end{equation}
	where $\phi_j$ denotes the eigenfunction corresponding to
	$E_j(\tau)$. By the Feynman-Hellmann theorem we have
	\begin{equation}\label{FH}
		\frac{d}{d\tau}\;E_j(\tau)=-\;\int_{\mathbb{R}^{d}}v(x)|\phi_j|^2(x,0)\;dx
	\end{equation}
	from which we get
	\begin{equation}\label{kinetic-energy-relation-1}
		\int_{\mathbb{R}^{d+1}_{+}}|\nabla_{x} \phi_j|^2+|\nabla_{y}
		\phi_j|^2\;dxdy=E_j(\tau)-\tau\;\frac{d}{d\tau}\;E_j(\tau).
	\end{equation}
	On the other hand, by the virial theorem
	\begin{equation}\label{virial-theorem}
		\int_{\mathbb{R}^{d+1}_{+}}|\nabla_{x} \phi_j|^2-|\nabla_{y}
		\phi_j|^2\;dxdy=E_j(\tau).
	\end{equation}
	Combining \eqref{kinetic-energy-relation-1} and
	\eqref{virial-theorem} we get
	\begin{equation}\label{kinetic-energy-relation-2}
		\int_{\mathbb{R}^{d+1}_{+}}|\nabla_{x}
		\phi_j|^2\;dxdy=E_j(\tau)-\frac{\tau}{2}\frac{d}{d\tau}\;E_j(\tau).
	\end{equation}
	Inserting into \eqref{Riesz-mean-ineq-1} we obtain after
	rearranging terms and simplification the differential inequality
	\begin{equation}\label{Riesz-mean-diff-ineq}
		\frac{d}{d\tau}\bigg(\frac{\sum_{E_j(\tau)<0}(-E_j(\tau))^{2}}{\tau^{4+d}}\bigg)\geq 0
	\end{equation}
	which proves the theorem.
\end{proof}
\subsection{Schr\"{o}dinger operators on closed compact hypersurfaces}
Let $M$ be a closed compact smooth $d$-dimensional manifold smoothly immersed in $\mathbb{R}^{d+1}$ with bounded mean curvature. On $M$ we consider the Laplace-Beltrami operator with a potential given by the square of the total mean curvature $h$:
\begin{equation}\label{H-beta}
	H_{\beta}=-\Delta+\frac{1-\beta}{4}\,h^2,\quad\beta\in\mathbb{R}.
\end{equation}
The study of this problem was introduced in \cite{Har2007}, see also \cite{HarStu2011} for the notations. We denote its eigenvalues by $\lambda_k=\lambda_k(\beta)$ in increasing order where $k$ is a positive integer and corresponding normalized eigenfunctions $u_k$. When $\beta=1$ then $\lambda_1=0$ and $u_1$ is constant. Denoting the scalar product in $L^2(M)$ by $\langle\cdot,\cdot\rangle$ we have the variational identity
\begin{equation}\label{eigenvalue-id-mean-curvature}
	\lambda_k=\langle u_k,(-\Delta+\frac{1-\beta}{4}\,h^2)u_k\rangle
\end{equation}
and by the Feynman-Hellmann theorem
\begin{equation}\label{Feynman-Hellmann-relation-mean-curvature-1}
	\dot{\lambda}_k=\frac{d\,\lambda_k}{d\beta}=-\,\frac{1}{4}\langle u_k,h^2u_k\rangle.
\end{equation}
In \cite{Har2007}, \cite{HarStu2011} the following inequality was derived from the quadratic sum rule \eqref{HS-sumrule-discrete} with $G$ chosen to the multiplication by a coordinate function. For all $z,\beta\in\mathbb{R}$:
\begin{equation}\label{HS-sum-rule-ineq-mean-curvature-1}
	\sum_{j}(z-\lambda_j)_{+}^2\leq \frac{4}{d} \sum_{j}(z-\lambda_j)_{+}\langle u_j,(-\Delta+\frac{1}{4}\,h^2)u_j\rangle.
\end{equation}
The main interest in \cite{Har2007}, \cite{HarStu2011} was the case $\beta=1$, that is eigenvalues of the Laplacian and the mean curvature was replaced by its upper bound in the sum rule \eqref{HS-sum-rule-ineq-mean-curvature-1}. Here we exploit the the variational identity  \eqref{eigenvalue-id-mean-curvature} and Feymman-Hellmann relation \eqref{Feynman-Hellmann-relation-mean-curvature-1} and get
\begin{equation}\label{eq-eigenvalue-mean-curvature-identity}
	\langle u_j,(-\Delta+\frac{1}{4}\,h^2)u_j\rangle= \lambda_j-\beta\dot{\lambda}_j.
\end{equation}
Inequality \eqref{HS-sum-rule-ineq-mean-curvature-1} is therefore equivalent to
\begin{equation}\label{HS-sum-rule-ineq-mean-curvature-2}
	\sum_{j}(z-\lambda_j)_{+}^2\leq \frac{4}{d} \sum_{j}(z-\lambda_j)_{+}(\lambda_j-\beta\dot{\lambda}_j)
\end{equation}
and after reorganizing terms as in \cite{HarStu2011} we get
\begin{equation}\label{HS-sum-rule-ineq-mean-curvature-3}
	\frac{d+4}{2}\sum_{j}(z-\lambda_j)_{+}^2\leq 2z\sum_{j}(z-\lambda_j)_{+}- 2\beta\sum_{j}(z-\lambda_j)_{+}\dot{\lambda}_j.
\end{equation}
We rewrite \eqref{HS-sum-rule-ineq-mean-curvature-3} in different ways. First we note that for $z\geq 0$ inequality \eqref{HS-sum-rule-ineq-mean-curvature-3} is equivalent to the linear partial differential inequality
\begin{equation}\label{HS-sum-rule-ineq-mean-curvature-pd-ineq-1}
	z\frac{\partial}{\partial z}\frac{\sum_{j}(z-\lambda_j)_{+}^2}{z^{2+d/2}}+ \beta\,\frac{\partial}{\partial\beta}\frac{\sum_{j}(z-\lambda_j)_{+}^2}{z^{2+d/2}}\geq 0\,.
\end{equation}
Replacing $z$ by $\beta z$ in \eqref{HS-sum-rule-ineq-mean-curvature-3} we get
\begin{equation}\label{HS-sum-rule-ineq-mean-curvature-4}
	\frac{d+4}{2}\sum_{j}(\beta z-\lambda_j)_{+}^2\leq 2\beta z\sum_{j}(\beta z-\lambda_j)_{+}- 2\beta\sum_{j}(\beta z-\lambda_j)_{+}\dot{\lambda}_j
\end{equation}
that is for any $z$ fixed
\begin{equation*}
	\frac{d+4}{2}\sum_{j}(\beta z-\lambda_j)_{+}^2\leq \beta\frac{d}{d\beta}\sum_{j}(\beta z-\lambda_j)_{+}^2
\end{equation*}
or equivalently the linear differential inequality for $\beta>0$:
\begin{equation}\label{HS-sum-rule-ineq-mean-curvature-od-ineq-1}
	\frac{d}{d\beta}\frac{\sum_{j}(\beta z-\lambda_j)_{+}^2}{\beta^{2+d/2}}\geq 0.
\end{equation}
Consequently we have for all $\beta_2\geq \beta_1>0$ and all $z> 0$ the following inequalities:
\begin{equation}\label{HS-sum-rule-ineq-mean-curvature-ineq-1}
	\frac{\sum_{j}\big(\beta_2 z-\lambda_j(\beta_2)\big)_{+}^2}{(\beta_2z)^{2+d/2}}\geq \frac{\sum_{j}\big(\beta_1 z-\lambda_j(\beta_1)\big)_{+}^2}{(\beta_1z)^{2+d/2}}
\end{equation}
or equivalently
\begin{equation}\label{HS-sum-rule-ineq-mean-curvature-final-ineq-2}
	\frac{\sum_{j}\big(z-\lambda_j(\beta_2)\big)_{+}^2}{(\beta_2z)^{2+d/2}}\geq \frac{\sum_{j}\big(\frac{\beta_1 z}{\beta_2}-\lambda_j(\beta_1)\big)_{+}^2}{(\beta_1z)^{2+d/2}}.
\end{equation}
For $z\geq 0$  we consider the operator
\begin{equation*}
	H_{\beta,z}=H_{\beta}-\beta z=-\Delta+\frac{1}{4}\,h^2-\beta(\frac{1}{4}\,h^2+z).
\end{equation*}
Its negative eigenvalues correspond to the eigenvalues of $H_{\beta}$ below $\beta z$. Let $N_{-}(H_{\beta,z})$ denote the number of negative eigenvalues of $H_{\beta,z}$. It satisfies the Weyl-limit
\begin{equation*}
	\underset{\beta\rightarrow \infty}{\lim}\,\frac{N_{-}(H_{\beta,z})}{\beta^{d/2}}=(2\pi)^{-d}B_d\int_M(\frac{1}{4}\,h^2+z)_{+}^{d/2}\,dV
\end{equation*}
from which we easily get the Weyl-limits for the the sums of the squares of the negative eigenvalues of $H_{\beta,z}$. Therefore letting $\beta_2$ tend to infinity in \eqref{HS-sum-rule-ineq-mean-curvature-ineq-1} we get the Weyl estimate
\begin{equation}\label{HS-sum-rule-ineq-mean-curvature-Weyl-estimate-1}
	\sum_{j}\big(\beta_1 z-\lambda_j(\beta_1)\big)_{+}^2\leq (2\pi)^{-d}B_d\,\frac{8\,\beta_1^{2+d/2}}{(d+2)(d+4)}\int_M(\frac{1}{4}\,h^2+z)_{+}^{2+d/2}\,dV
\end{equation}
or equivalently
\begin{equation}\label{HS-sum-rule-ineq-mean-curvature-Weyl-estimate-2}
	\sum_{j}\big(z-\lambda_j(\beta)\big)_{+}^2\leq (2\pi)^{-d}B_d\,\frac{8}{(d+2)(d+4)}\int_M(\frac{\beta}{4}\,h^2+z)_{+}^{2+d/2}\,dV.
\end{equation}
In particular, for $\beta=1$ that is for the eigenvalues of the Laplace-Beltrami $\lambda_j(1)$ we have

\begin{equation}\label{HS-sum-rule-ineq-mean-curvature-estimate-3}
	\sum_{j}\big(z-\lambda_j(1)\big)_{+}^2\leq (2\pi)^{-d}B_d\,\frac{8}{(d+2)(d+4)}\int_M(\frac{1}{4}\,h^2+z)_{+}^{2+d/2}\,dV
	\end{equation}
which improves the estimate in \cite{HarStu2011}.

\section{A sum rule for second derivatives of eigenvalues}
We prove our main results Theorem \ref{thm-second order Feynman-Hellmann theorem}, Theorem \ref{thm-2nd-order-Feynman-Hellmann-squeezing} and present further consequences. 
\subsection{Proof of Theorem \ref{thm-second order Feynman-Hellmann theorem}}
We recall that deriving the Feyman-Hellmann relation $\dot{\lambda}_{j}=\langle u_{j},\dot{H} u_{j}\rangle$ with respect to $\tau$ we have the second derivative formula
\begin{equation}\label{Feynman-Hellmann theorem-2nd-derivative-short-bis}
	\ddot{\lambda}_{j}=\langle u_{j},\ddot{H} u_{j}\rangle+2\langle \dot{u}_{j},\dot{H} u_{j}\rangle.
\end{equation}
We decompose the last term in \eqref{Feynman-Hellmann theorem-2nd-derivative-short-bis} applying the completeness of the eigenfunctions $u_j$ and the following gap formula for $\dot{H}$.

\begin{lem}
	Under the conditions of Theorem \ref{thm-second order Feynman-Hellmann theorem} for any pair of eigenfunctions $u_j,u_k$ the following gap formula holds:
	\begin{equation}\label{Feynman-Hellmann-gap-formula}
		\langle \dot{H}u_j,u_k\rangle=(\lambda_j-\lambda_k)\langle \dot{u}_j,u_k\rangle+\dot{\lambda}_j\delta_{jk}
	\end{equation}
	where $ \dot{u}_j$ denotes the derivative of $u_j$ with respect to the parameter $\tau$ and $\delta_{jk}$ is the Kronecker-Delta.
\end{lem}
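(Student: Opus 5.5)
The plan is to differentiate the eigenvalue equation $H_\tau u_{j,\tau}=\lambda_{j,\tau}u_{j,\tau}$ with respect to $\tau$ and then take the inner product with $u_k$. Analyticity of $\tau\mapsto u_j$ on the interval $I$ and the hypothesis that $\dot H$ maps $\mathcal{D}_H$ into $\mathcal{D}_H$ justify this differentiation. It yields the identity
\begin{equation*}
\dot H u_j + H\dot u_j = \dot\lambda_j u_j + \lambda_j \dot u_j .
\end{equation*}
Pairing with $u_k$ gives
\begin{equation*}
\langle \dot H u_j,u_k\rangle + \langle H\dot u_j,u_k\rangle = \dot\lambda_j\delta_{jk} + \lambda_j\langle \dot u_j,u_k\rangle,
\end{equation*}
using orthonormality $\langle u_j,u_k\rangle=\delta_{jk}$.

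The next step is to move $H$ onto $u_k$ by self-adjointness. This gives $\langle H\dot u_j,u_k\rangle=\langle \dot u_j,Hu_k\rangle=\lambda_k\langle \dot u_j,u_k\rangle$, since $\lambda_k$ is real. Substituting and rearranging produces exactly \eqref{Feynman-Hellmann-gap-formula}:
\begin{equation*}
\langle \dot H u_j,u_k\rangle=(\lambda_j-\lambda_k)\langle \dot u_j,u_k\rangle+\dot\lambda_j\delta_{jk}.
\end{equation*}
For $j=k$ this is a consistency check: it recovers the Feynman--Hellmann relation \eqref{Feynman-Hellmann theorem-1}. It also agrees with the observation that $\mathrm{Re}\langle\dot u_j,u_j\rangle=0$, which follows from differentiating $\|u_j\|=1$.

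The main (mild) obstacle is justifying the differentiation and the use of self-adjointness, which requires $\dot u_j\in\mathcal{D}_H$. I would argue this via the Kato--Rellich analytic framework assumed in the theorem. If $\dot u_j\in\mathcal{D}_H$ cannot be verified directly, I would avoid it with difference quotients. Starting from $\langle (H_{\tau+\delta}-H_\tau)u_{j,\tau+\delta},u_{k,\tau}\rangle$, I would use self-adjointness of $H_\tau$ on $u_{j,\tau+\delta}\in\mathcal{D}_H$ to rewrite it as
\begin{equation*}
\langle (H_{\tau+\delta}-H_\tau)u_{j,\tau+\delta},u_{k,\tau}\rangle=(\lambda_{j,\tau+\delta}-\lambda_{k,\tau})\langle u_{j,\tau+\delta},u_{k,\tau}\rangle ,
\end{equation*}
in the spirit of the integral Feynman--Hellmann theorem \eqref{Feynman-Hellman-integral-version}. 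Dividing by $\delta$ and letting $\delta\to0$ then gives the formula: for $j\neq k$ one uses $\langle u_{j,\tau},u_{k,\tau}\rangle=0$, and for $j=k$ one splits off $\dot\lambda_j$. This route needs only weak differentiability of $u_j$, which analyticity provides.
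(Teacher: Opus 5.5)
Your argument is correct and is essentially the paper's. The paper differentiates the scalar identity $\langle Hu_j,u_k\rangle=\lambda_j\delta_{jk}$, so it also needs the differentiated orthonormality relation $\langle \dot u_j,u_k\rangle+\langle u_j,\dot u_k\rangle=0$; you differentiate the vector equation $Hu_j=\lambda_j u_j$ and pair with the fixed $u_k$, which avoids that relation, while both routes equally require $\dot u_j\in\mathcal{D}_H$. Your difference-quotient fallback is exactly the integral version of the gap formula that the paper records in the remark after the lemma, \eqref{Feynman-Hellman-integral-version-gap-formula}.
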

\begin{proof}
	First we note that the normalization $\langle u_j,u_k\rangle=\delta_{jk}$ implies
	\begin{equation}\label{derivate-normalized-u_j}
		\langle \dot{u}_j,u_k\rangle+\langle u_j, \dot{u}_k\rangle=0.
	\end{equation}
	and therefore
	\begin{equation}
		\dot{\lambda}_j\delta_{jk}=\frac{d}{d\,\tau}\langle H u_j,u_k\rangle=\langle \dot{H}u_j,u_k\rangle + \langle H\dot{u}_j,u_k\rangle+\langle H u_j, \dot{u}_k\rangle
	\end{equation}
	from which we conclude by the self-adjointness of $H$ and eq.\eqref{derivate-normalized-u_j}.
\end{proof}
 \begin{rem}
 	There is also integral version of the gap formula for $\dot{H}$ corresponding to the integral version of the Feynman-Hellmann theorem given in eq. \eqref{Feynman-Hellman-integral-version}
 	\begin{equation}\label{Feynman-Hellman-integral-version-gap-formula}
 		\begin{split}
 		\langle  u_{j,\tau+\delta},(H_{\tau+\delta}-H_{\tau})u_{k,\tau}\rangle&=(\lambda_{j,\tau}-\lambda_{k,\tau})\langle u_{j,\tau+\delta}-u_{j,\tau},u_{k,\tau}\rangle\\
 		&\quad +(\lambda_{j,\tau+\delta}-\lambda_{j,\tau})\langle u_{j,\tau+\delta},u_{k,\tau}\rangle.\\
 		\end{split}
 	\end{equation}
 \end{rem}
By the second derivative formula eq.\eqref{Feynman-Hellmann theorem-2nd-derivative-short-bis} the gap formula will imply the following
\begin{prop}Under the above conditions for each positive integer $j$:
	\begin{equation}\label{second-derivative-sum-rule-1}
		\langle \ddot{H}u_j,u_j\rangle-\ddot{\lambda}_j =2\sum_{\lambda_k\neq\lambda_j}(\lambda_k-\lambda_j)\big|\langle \dot{u}_j,u_k\rangle\big|^2=2\sum_{\lambda_k\neq\lambda_j}\frac{\big|\langle \dot{H}u_j,u_k\rangle\big|^2}{\lambda_k-\lambda_j}.
	\end{equation}
\end{prop}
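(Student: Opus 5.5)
Starting from the second derivative formula \eqref{Feynman-Hellmann theorem-2nd-derivative-short-bis}, we have
\[
\langle \ddot{H}u_j,u_j\rangle-\ddot{\lambda}_j=-2\langle \dot{u}_j,\dot{H}u_j\rangle ,
\]
so it suffices to expand the right-hand side via the completeness of $\{u_k\}$. By Parseval, $\langle \dot{u}_j,\dot{H}u_j\rangle=\sum_k \langle \dot{u}_j,u_k\rangle\langle u_k,\dot{H}u_j\rangle$. This requires $\dot{H}u_j\in\mathcal{H}$, which holds because $\dot H$ maps $\mathcal{D}_H$ into $\mathcal{D}_H$. It also requires $\dot u_j\in\mathcal{H}$, which follows from analyticity of the eigenvector.

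Next we insert the gap formula \eqref{Feynman-Hellmann-gap-formula}, in its conjugated form
\[
\langle u_k,\dot{H}u_j\rangle=\overline{\langle \dot{H}u_j,u_k\rangle}=(\lambda_j-\lambda_k)\overline{\langle \dot{u}_j,u_k\rangle}+\dot{\lambda}_j\delta_{jk},
\]
using that the eigenvalues are real. We split the sum over $k$ into three groups.

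First, for $k=j$ the term is $\langle\dot u_j,u_j\rangle\dot\lambda_j$. By \eqref{derivate-normalized-u_j} with $k=j$, $\mathrm{Re}\langle\dot u_j,u_j\rangle=0$. Since the left-hand side of the identity is real, this term contributes only an imaginary part, and that part must cancel. Alternatively, we fix the phase of the analytic eigenvector so that $\langle\dot u_j,u_j\rangle=0$ (parallel transport normalization), which changes neither $\lambda_j$ nor the quantities in the statement.

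Second, for $k\neq j$ with $\lambda_k=\lambda_j$ the gap factor vanishes, so these terms drop out.

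Third, for $\lambda_k\neq\lambda_j$ each term equals $(\lambda_j-\lambda_k)|\langle\dot u_j,u_k\rangle|^2$. Multiplying by $-2$ gives the first expression in \eqref{second-derivative-sum-rule-1}.

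For the second expression, the gap formula with $\lambda_k\neq\lambda_j$ gives $\langle\dot u_j,u_k\rangle=\langle\dot H u_j,u_k\rangle/(\lambda_j-\lambda_k)$. Substituting, $(\lambda_k-\lambda_j)|\langle\dot u_j,u_k\rangle|^2=|\langle\dot H u_j,u_k\rangle|^2/(\lambda_k-\lambda_j)$.

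The main obstacle is the diagonal and degenerate bookkeeping. The diagonal term must be handled honestly, either by the phase normalization or by taking real parts. For degenerate eigenvalues (allowed on $I$ when the branches are analytic but coincide), we must check that the same-eigenvalue terms genuinely vanish. They do, because the gap formula holds for each analytic branch pair: at a crossing, $\langle\dot H u_j,u_k\rangle=0$ for $j\neq k$ in the same eigenspace, and the remaining $\langle\dot u_j,u_k\rangle$ coefficients appear only multiplied by $\lambda_j-\lambda_k=0$. Convergence of the series is automatic, since it is a Parseval expansion of a finite inner product of two vectors of $\mathcal{H}$.
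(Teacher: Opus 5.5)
Your proof is correct and follows the paper's argument. You expand $\langle\dot u_j,\dot H u_j\rangle$ by completeness, remove the diagonal term via $\langle\dot u_j,u_j\rangle=0$ (a phase normalization, which the paper uses without comment), drop the degenerate terms by the gap formula, and rewrite $\langle\dot u_j,u_k\rangle=\langle\dot H u_j,u_k\rangle/(\lambda_j-\lambda_k)$. The one loose phrase is that the imaginary diagonal piece ``must cancel'' because the left side is real: in an arbitrary phase $\langle\dot u_j,\dot H u_j\rangle$ is not real, since the honest second-derivative formula is $\ddot\lambda_j=\langle u_j,\ddot H u_j\rangle+2\,\mathrm{Re}\langle\dot u_j,\dot H u_j\rangle$, so your alternatives of taking real parts or fixing the phase are the sound justification.
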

\begin{proof} Using the completeness of the eigenfunctions and the relation $\langle \dot{u}_j,u_j\rangle=0$ we get by the gap formula \eqref{Feynman-Hellmann-gap-formula}:
	\begin{equation*}
		\begin{split}
			\langle \dot{H}\dot{u}_j,u_j\rangle&=\langle \dot{u}_j,\dot{H}u_j\rangle\\
			&=\sum_{\lambda_k}\langle \dot{u}_j,u_k\rangle\langle u_k,\dot{H}u_j\rangle\\
			&=\sum_{\lambda_k\neq\lambda_j}\langle \dot{u}_j,u_k\rangle\langle u_k,\dot{H}u_j\rangle\\
			&=-\sum_{\lambda_k\neq\lambda_j}(\lambda_k-\lambda_j)\big|\langle \dot{u}_j,u_k\rangle\big|^2\\
			&=-\sum_{\lambda_k\neq\lambda_j}\frac{\big|\langle \dot{H}u_j,u_k\rangle\big|^2}{\lambda_k-\lambda_j}
		\end{split}
	\end{equation*}
	and the proposition follows from the second derivative formula eq.\eqref{Feynman-Hellmann theorem-2nd-derivative-short-bis} 
\end{proof}

We are now in position to prove Theorem \ref{thm-second order Feynman-Hellmann theorem}.
\begin{proof}[Proof of Theorem \ref{thm-second order Feynman-Hellmann theorem}]
By the sum rule for the second derivative eq.\eqref{second-derivative-sum-rule-1} we have
\begin{equation*}
	\begin{split}
		\sum_{\lambda_j} f(\lambda_j)(\ddot{\lambda}_j -\langle \ddot{H}u_j,u_j\rangle)&=-\underset{\lambda_j\neq\lambda_k}{\sum\sum}\frac{2\,f(\lambda_j)}{\lambda_k-\lambda_j}\big|\langle u_k,\dot{H}u_j\rangle\big|^2\\
		&=\underset{\lambda_j\neq\lambda_k}{\sum\sum}\frac{f(\lambda_k)-f(\lambda_j)}{\lambda_k-\lambda_j}\big|\langle u_k,\dot{H}u_j\rangle\big|^2\\
	\end{split}
\end{equation*}
where in the last line we have symmetrized the double sum. When we sum only $\lambda_j\in J$ for a subset $J$ of the spectrum we have
\begin{equation}\label{Feynman-Hellmann-sum-rule-general-splittig-with-J}
	\begin{split}
		\sum_{\lambda_j\in J} f(\lambda_j)(\ddot{\lambda}_j -\langle \ddot{H}u_j,u_j\rangle)&=\underset{\lambda_j\neq\lambda_k}{\sum_{\lambda_j\in J}\sum_{\lambda_k\in J}}\frac{f(\lambda_k)-f(\lambda_j)}{\lambda_k-\lambda_j}\big|\langle u_k,\dot{H}u_j\rangle\big|^2\\
		&-\sum_{\lambda_j\in J}\sum_{\lambda_k\notin J}\frac{2\,f(\lambda_j)}{\lambda_k-\lambda_j}\big|\langle u_k,\dot{H}u_j\rangle\big|^2.\\
	\end{split}
\end{equation}
When $f'$ is concave we use the inequality
\begin{equation*}
	\frac{f(\lambda_k)-f(\lambda_j)}{\lambda_k-\lambda_j}\geq \frac{f'(\lambda_j)+f'(\lambda_k)}{2}
\end{equation*}
which yields using the symmetry in the first double sum
\begin{equation*}
	\begin{split}
		&\sum_{\lambda_j\in J} f(\lambda_j)(\ddot{\lambda}_j -\langle \ddot{H}u_j,u_j\rangle)\\
		&\geq\underset{\lambda_j\neq\lambda_k}{\sum_{\lambda_j\in J}\sum_{\lambda_k\in J}}f'(\lambda_j)\big|\langle u_k,\dot{H}u_j\rangle\big|^2
		-\sum_{\lambda_j\in J}\sum_{\lambda_k\notin J}\frac{2\,f(\lambda_j)}{\lambda_k-\lambda_j}\big|\langle u_k,\dot{H}u_j\rangle\big|^2\\
		&=\sum_{\lambda_j\in J}\sum_{\lambda_k}f'(\lambda_j)\big|\langle \dot{H}u_j,u_k\rangle-\dot{\lambda}_j\delta_{jk}\big|^2\\
		&-\sum_{\lambda_j\in J}\sum_{\lambda_k\notin J}\frac{2\,f(\lambda_j)+f'(\lambda_j)(\lambda_k-\lambda_j)}{\lambda_k-\lambda_j}\big|\langle u_k,\dot{H}u_j\rangle\big|^2\\
		&=\sum_{\lambda_j\in J}f'(\lambda_j)\big(\langle \dot{H}u_j,\dot{H}u_j\rangle-\dot{\lambda}_j^2\big)\\
		&-\sum_{\lambda_j\in J}\sum_{\lambda_k\notin J}\frac{2\,f(\lambda_j)+f'(\lambda_j)(\lambda_k-\lambda_j)}{\lambda_k-\lambda_j}\big|\langle u_k,\dot{H}u_j\rangle\big|^2\\
	\end{split}
\end{equation*}
which is inequality \eqref{Feynman-Hellmann-sum-rule-general-ineq-f-prime-concave}. The above inequality becomes an equality when $f'$ is linear, i.e. $f$ is of the form $f(\lambda_j)=a\lambda_j^2+b\lambda_j+c$, and choosing $f(\lambda_j)=(z-\lambda_j)^2$ we get after reorganizing terms the quadratic second-order Feynman-Hellmann sum rule eq. \eqref{Feynman-Hellmann-sum-rule-quadratic}.
\end{proof}

\subsection{Proof of Theorem \ref{thm-2nd-order-Feynman-Hellmann-squeezing}}
 Let $d:=\inf J^c-\sup J>0$. We consider the sum rule \eqref{Feynman-Hellmann-sum-rule-constant} which corresponds to the coefficient of $z^2$ in the quadratic second-order Feynman-Hellmann sum rule eq. \eqref{Feynman-Hellmann-sum-rule-quadratic}, i.e.
 \begin{equation}
 	\sum_{\lambda_j\in J} \langle \ddot{H}u_j,u_j\rangle -\ddot{\lambda}_j  = 
 	2\sum_{\lambda_j\in J}\sum_{\lambda_k\notin J}(\lambda_k-\lambda_j)^{-1}\big|\langle \dot{H} u_j,u_k\rangle\big|^2.
 \end{equation}
Obviously,
\begin{equation*}
	2\sum_{\lambda_j\in J}\sum_{\lambda_k\notin J}(\lambda_k-\lambda_j)^{-1}\big|\langle \dot{H} u_j,u_k\rangle\big|^2\leq d^{-2} \sum_{\lambda_j\in J}\sum_{\lambda_k\notin J}(\lambda_k-\lambda_j)^{+1}\big|\langle \dot{H} u_j,u_k\rangle\big|^2
\end{equation*}
and we evaluate the double sum by adding the abstract Thomas-Reiche-Kuhn sum rule \eqref{TRK-abstract} over $\lambda_{j}\in J$ with $G=\dot{H}$ which corresponds to the coefficient of $z^2$ in the quadratic sum rule \eqref{HS-sumrule-discrete} by Harrell and the author.

\subsection{Further results}
First of all, we show that the quadratic second order Feynman-Hellmann relation \eqref{Feynman-Hellmann-sum-rule-quadratic} implies the quadratic sum rule \eqref{HS-sumrule-discrete} for self-adjoint $G$ and more generally \eqref{Feynman-Hellmann-sum-rule-general} implies the abstract sum rule stated in \cite{HarStu2010}.

 \begin{thm}\label{thm-2nd-order-Feynman-Hellmann-implication of HS sum rule}
Let $H_0$ be a self-adjoint operator with domain $\mathcal{D}$ and spectrum consisting of eigenvalues $\lambda_{j}$ with corresponding eigenfunctions $\phi_j$. Let $G$ be another self-adjoint operator with domain $\mathcal{D}_G$ satisfying the additional domain hypothesis $G(\mathcal{D})\subseteq \mathcal{D}\subseteq \mathcal{D}_G$. Then for any subset $J$ of the spectrum the quadratic sum rule
\begin{equation}\label{HS-sumrule-discrete-from-Feynman-Hellmann}
	\begin{split}
		&-\;\frac1{2}\sum_{\lambda_j\in J}  (z-\lambda_j)^2\,\langle[G,[H_0,G]]\phi_j,\phi_j\rangle -\sum_{\lambda_j\in
			J}(z-\lambda_j)\,\langle[H_0,G]\phi_j,[H_0,G]\phi_j\rangle)\\
		&= \sum_{\lambda_j\in J}\sum_{\lambda_k\notin J}
		(z-\lambda_j)(z-\lambda_k)(\lambda_k-\lambda_j)\,|\langle G\phi_j,\phi_k\rangle|^2\\
	\end{split}
\end{equation}
holds.
\end{thm}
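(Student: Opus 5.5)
The plan is to realize the quadratic sum rule as a special case of the quadratic second order Feynman-Hellmann relation \eqref{Feynman-Hellmann-sum-rule-quadratic}. We apply it to the isospectral family obtained by conjugating $H_0$ with the unitary group generated by $G$:
\begin{equation*}
H(\tau)=e^{-i\tau G}H_0\,e^{i\tau G},\qquad u_j(\tau)=e^{-i\tau G}\phi_j .
\end{equation*}
Each $H(\tau)$ is unitarily equivalent to $H_0$. Hence $\lambda_j(\tau)=\lambda_j$ for all $\tau$, the $u_j(\tau)$ form a complete orthonormal eigenbasis, and $\dot\lambda_j=\ddot\lambda_j=0$. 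All the spectral terms in \eqref{Feynman-Hellmann-sum-rule-quadratic} therefore drop out, and only expectation values of $\dot H$ and $\ddot H$ remain.

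Next we compute the operator derivatives at $\tau=0$:
\begin{equation*}
\dot H=-i[G,H_0]=i[H_0,G],\qquad \ddot H=-[G,[G,H_0]]=[G,[H_0,G]].
\end{equation*}
We then evaluate the three ingredients of \eqref{Feynman-Hellmann-sum-rule-quadratic} at $\tau=0$:
\begin{itemize}
\item $\langle \ddot H\phi_j,\phi_j\rangle=\langle [G,[H_0,G]]\phi_j,\phi_j\rangle$;
\item $\langle\dot H\phi_j,\dot H\phi_j\rangle=\langle [H_0,G]\phi_j,[H_0,G]\phi_j\rangle$;
\item by self-adjointness of $H_0$ and $G$, $\langle [H_0,G]\phi_j,\phi_k\rangle=(\lambda_k-\lambda_j)\langle G\phi_j,\phi_k\rangle$, so that $|\langle\dot H\phi_j,\phi_k\rangle|^2=(\lambda_k-\lambda_j)^2|\langle G\phi_j,\phi_k\rangle|^2$.
\end{itemize}
Inserting these into \eqref{Feynman-Hellmann-sum-rule-quadratic} cancels one factor $(\lambda_k-\lambda_j)$ in the denominator. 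Dividing by $2$ then yields the quadratic sum rule, with $\frac12(z-\lambda_j)^2$ times the double commutator, the term $-(z-\lambda_j)\|[H_0,G]\phi_j\|^2$, and the right-hand side $\sum_{J}\sum_{J^c}(z-\lambda_j)(z-\lambda_k)(\lambda_k-\lambda_j)|\langle G\phi_j,\phi_k\rangle|^2$. This is exactly \eqref{HS-sumrule-discrete} for $G=G^*$. The overall sign in the displayed statement has to be matched to this form by tracking the commutator conventions carefully; I expect it to agree with \eqref{HS-sumrule-discrete}. The same substitution into \eqref{Feynman-Hellmann-sum-rule-general}, with $f$ arbitrary, gives the more general sum rule of \cite{HarStu2010}.

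The main obstacle is justifying the use of Theorem \ref{thm-second order Feynman-Hellmann theorem}. For unbounded $G$ the family $H(\tau)$ need not be analytic, and its domain moves with $\tau$. To avoid this, I will not invoke the theorem as a black box. Its proof only used two facts at the fixed value $\tau=0$: the gap formula \eqref{Feynman-Hellmann-gap-formula} and the second derivative formula \eqref{Feynman-Hellmann theorem-2nd-derivative-short-bis}. I will verify both directly, using the explicit derivative $\dot u_j=-iG\phi_j$.

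The domain hypothesis $G(\mathcal D)\subseteq\mathcal D\subseteq\mathcal D_G$ guarantees that $G\phi_j$, $H_0G\phi_j$ and $G^2\phi_j$ are all defined, so every commutator expression is meaningful. With $\dot u_j=-iG\phi_j$:
\begin{itemize}
\item the gap formula reduces to the commutator identity $\langle [H_0,G]\phi_j,\phi_k\rangle=(\lambda_k-\lambda_j)\langle G\phi_j,\phi_k\rangle$ computed above;
\item the relation $0=\ddot\lambda_j$ from \eqref{Feynman-Hellmann theorem-2nd-derivative-short-bis} becomes the algebraic identity $\langle[G,[H_0,G]]\phi_j,\phi_j\rangle=2\|[H_0,G]\phi_j\|^2$-type bookkeeping, checkable by expanding with self-adjointness.
\end{itemize}
After that, the Parseval expansions and the symmetrization of the double sum go through verbatim as in the proof of Theorem \ref{thm-second order Feynman-Hellmann theorem}.
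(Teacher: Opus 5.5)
Your main line is the same as the paper's proof. You conjugate $H_0$ by $e^{i\tau G}$, use isospectrality ($\dot\lambda_j=\ddot\lambda_j=0$), insert $\dot H=i[H_0,G]$, $\ddot H=[G,[H_0,G]]$ and $|\langle\dot H\phi_j,\phi_k\rangle|^2=(\lambda_k-\lambda_j)^2|\langle G\phi_j,\phi_k\rangle|^2$ into \eqref{Feynman-Hellmann-sum-rule-quadratic}, and divide by $2$. The paper leaves the last substitution implicit.

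That part is correct, including the sign you obtain, so do not hedge on it. No commutator convention can turn your $+\tfrac12$ into the displayed $-\tfrac12$: both $[G,[H_0,G]]$ and $\|[H_0,G]\phi_j\|^2$ are invariant under $[A,B]\mapsto-[A,B]$. The $-\tfrac12$ in the statement is a misprint. Already the $z^2$-coefficient, which is \eqref{TRK-abstract} summed over $J$, reads $\tfrac12\sum_{J}\langle[G,[H_0,G]]\phi_j,\phi_j\rangle=\sum_{J}\sum_{J^c}(\lambda_k-\lambda_j)|\langle G\phi_j,\phi_k\rangle|^2$. This agrees with \eqref{HS-sumrule-discrete} and with what the paper's own $\dot H,\ddot H$ produce.

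Your plan to verify the ingredients pointwise at $\tau=0$, instead of invoking the second order Feynman--Hellmann theorem as a black box, is a sensible upgrade. The paper asserts analyticity of $e^{-iG\tau}\phi_j$ from a domain hypothesis that only obviously makes $\phi_j$ a $C^\infty$ vector for $G$, and it does not check the common-domain assumption. However, the identity you state in place of $\ddot\lambda_j=0$ is wrong. With $\dot u_j=-iG\phi_j$ the second-derivative formula gives
\[
\langle[G,[H_0,G]]\phi_j,\phi_j\rangle=2\langle[H_0,G]\phi_j,G\phi_j\rangle=2\sum_k(\lambda_k-\lambda_j)|\langle G\phi_j,\phi_k\rangle|^2 ,
\]
which is the one-state TRK rule. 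It is not $2\|[H_0,G]\phi_j\|^2=2\sum_k(\lambda_k-\lambda_j)^2|\langle G\phi_j,\phi_k\rangle|^2$. To prove it, write the left side as $2\langle H_0G\phi_j,G\phi_j\rangle-2\lambda_j\|G\phi_j\|^2$, using $G^2\phi_j\in\mathcal D$. This form reading is also how the double commutator must be understood, since $GH_0G\phi_j$ is not covered by the domain hypothesis. So your claim that every commutator expression is meaningful is slightly too strong.

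Also, the proof of \eqref{second-derivative-sum-rule-1} does not carry over quite verbatim. It uses $\langle\dot u_j,u_j\rangle=0$, whereas here $\langle -iG\phi_j,\phi_j\rangle$ is generally nonzero. The $k=j$ term still drops, because it carries the factor $\langle\phi_j,\dot H\phi_j\rangle=i\langle\phi_j,[H_0,G]\phi_j\rangle=0$. With these corrections your argument is complete and somewhat more careful than the paper's.
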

\begin{proof}
By Stone's theorem the unitary group $\displaystyle e^{iG\tau}$ is strongly continuous. We define the operator $H=H(\tau)$ by
\begin{equation}
	H=e^{-iG\tau}H_0e^{iG\tau}.
\end{equation}
Obvioulsy the eigenvalues of $H$ do not depend on $\tau$ and are given by $\lambda_{j}$ with corresponding eigenfunctions $u_j= e^{-iG\tau}\phi_j $ (which are real analytic in $\tau$ by the domain hypothesis on $G$). We compute
\begin{equation}
	\dot{H}=ie^{-iG\tau}[H_0,G]e^{iG\tau}, \quad 	\ddot{H}=e^{-iG\tau}[G,[H_0,G]]e^{iG\tau}.
\end{equation}
Inserting these facts into the quadratic second order Feynman-Hellmann sum rule eq.\eqref{Feynman-Hellmann-sum-rule-quadratic} we conclude.
\end{proof}

\begin{rem}
	The $u_j(\tau)$ satisfy the abstract Schr\"{o}dinger equation
	\begin{equation*}
	i\frac{d \, u}{d\tau}=Gu.
	\end{equation*}
	Therefore the second commutator $[G,[H_0,G]]$ is the expression in Ehrenfest's theorem for the expectation values of $H_0$ for solutions of the above Schr\"{o}dinger equation since (at least formally)
	\begin{equation*}
		\frac{d^2}{d\tau^2} \langle H_0u,u\rangle=\langle[G,[H_0,G]]u,u\rangle
	\end{equation*}
	for solutions of the above Schr\"{o}dinger equation. Note however, that in the applications of Ehrenfest's theorem \cite{Ehr1927} the role of $G$ and $H_0$ are interchanged in the sense that $G$ is a Schr\"{o}dinger operator of the form $-\Delta +V(x)$ and $H_0=x$ is the position operator so that $[G,[H_0,G]]=-2\nabla V(x)$ while in the sum rule applications  $H_0$ is Schr\"{o}dinger operator and $G$ typically a multiplication operator. Inserting superpositions of standing waves into Ehrensfest's theorem for the Schr\"{o}dinger operator $-\Delta +V(x)$ lead to sum rules different from those presented here with various applications in quantum mechanics: for example in \cite{GroStu1995} it is shown that the ground state of the (non-relativistic) hydrogen atom in a constant magnetic field has zero angular momentum, to our knowledge the only "pure" quantum mechanical proof of this fact known so far.
\end{rem}

\section{Inequalities for Eigenvalues and Traces of Hermitian Matrices}

\subsection{Inequalities for traces of Hermitian matrices.}
We apply the second order Feynman-Hellmann sum rules of Theorem \ref{thm-second order Feynman-Hellmann theorem} when $J$ represents the full spectrum of $H$. We provide not only alternative proofs of known trace inequalities but also correction terms sharpening some of these inequalities. In the following we note $Tr(H)$ the trace of $H$.
\begin{thm}\label{Theorem-trace-class-inequalities}
	Let $H=H_{\tau}$, $\tau$ real,  be an analytic one parameter family of $n\times n$ hermitian matrices. Let $F$ be a twice differentiable function defined on an interval containing the spectra of $H_{\tau}$ and let $f=F'$ be the derivative function of $F$.
	\begin{enumerate}
		\item Suppose that $F$ is convex (concave). Then
		\begin{equation}\label{eq-trace-convexity-concavity}
			\frac{d^2}{d\tau^2} Tr(F(H))\geq (\leq) Tr(f(H)\ddot{H})+\sum_{j=1}^nf'(\lambda_j)\dot{\lambda_j}^2
		\end{equation}
		with equality if $F$ is an affine linear function. In particular, if $\ddot{H}=0$ then $ Tr(F(H))$ is convex (concave) in $\tau$.
		\item Suppose that $f'=F''$ is convex (concave). Then
		\begin{equation}\label{eq-trace-convexity-concavity-operator-version}
			\frac{d^2}{d\tau^2} Tr(F(H))\leq (\geq) Tr(f(H)\ddot{H})+Tr(\dot{H}f'(H)\dot{H}).
		\end{equation}
		Equality holds iff $F$ is a polynomial of degree $3$.
		\item Suppose that $F$ is of class $C^4$ and that $F''''=f'''$ is convex (concave). Then
		\begin{equation}\label{eq-trace-improved-convexity-concavity-operator-version}
			\frac{d^2}{d\tau^2} Tr(F(H))\geq (\leq) Tr(f(H)\ddot{H})+Tr(\dot{H}f'(H)\dot{H})+\frac{1}{12}Tr(f'''(H)[H,\dot{H}]^2).
		\end{equation}
		Equality holds iff $F$ is a polynomial of degree $5$.
	\end{enumerate}
	
\end{thm}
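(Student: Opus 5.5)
The plan is to specialise the general identity \eqref{Feynman-Hellmann-sum-rule-general} of Theorem \ref{thm-second order Feynman-Hellmann theorem} to $J=$ the full (finite) spectrum. This reduces all three statements to scalar inequalities on divided differences of $f$.

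\textbf{Step 1: the basic expansion.} Write $h_{kj}=\langle u_k,\dot H u_j\rangle$. By the ordinary Feynman--Hellmann theorem, $\frac{d^2}{d\tau^2}Tr(F(H))=\sum_j f(\lambda_j)\ddot\lambda_j+f'(\lambda_j)\dot\lambda_j^2$. Also $Tr(f(H)\ddot H)=\sum_j f(\lambda_j)\langle\ddot H u_j,u_j\rangle$. So \eqref{Feynman-Hellmann-sum-rule-general} gives
\begin{equation*}
\frac{d^2}{d\tau^2}Tr(F(H))=Tr(f(H)\ddot H)+\sum_j f'(\lambda_j)\dot\lambda_j^2+\underset{\lambda_j\neq\lambda_k}{\sum\sum} f[\lambda_j,\lambda_k]\,|h_{kj}|^2 ,
\end{equation*}
where $f[a,b]=(f(b)-f(a))/(b-a)$.

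At crossings I would use the analytic (Rellich) eigenbasis. Within a degenerate eigenspace this basis diagonalises $\dot H$, so $h_{kj}=\dot\lambda_j\delta_{jk}$ there. For finite matrices all sums are finite, so no convergence issue arises.

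\textbf{Step 2: part (1).} If $F$ is convex then $f$ is nondecreasing, so $f[\lambda_j,\lambda_k]\ge0$ and the double sum is nonnegative. For affine $F$ the function $f$ is constant, so the double sum vanishes and equality holds. If $\ddot H=0$, the remaining term $\sum_j f'(\lambda_j)\dot\lambda_j^2$ is $\ge0$ ($\le0$ in the concave case), which gives the convexity (concavity) of $\tau\mapsto Tr(F(H))$.

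\textbf{Step 3: part (2).} Expand in the eigenbasis and use Step 1's diagonalisation within degenerate blocks:
\begin{equation*}
Tr(\dot H f'(H)\dot H)=\sum_{j,k}f'(\lambda_k)|h_{kj}|^2=\sum_j f'(\lambda_j)\dot\lambda_j^2+\underset{\lambda_j\neq\lambda_k}{\sum\sum}\tfrac12\big(f'(\lambda_j)+f'(\lambda_k)\big)|h_{kj}|^2 ,
\end{equation*}
where the last equality symmetrises the off-diagonal part. Subtracting this from Step 1 leaves
\begin{equation*}
\underset{\lambda_j\neq\lambda_k}{\sum\sum}\Big(f[\lambda_j,\lambda_k]-\tfrac12\big(f'(\lambda_j)+f'(\lambda_k)\big)\Big)|h_{kj}|^2 .
\end{equation*}
Since $f[a,b]$ is the mean of $g=f'$ over $[a,b]$, the Hermite--Hadamard inequality shows each bracket is $\le0$ ($\ge0$) when $f'$ is convex (concave). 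The bracket vanishes identically exactly when $f'$ is affine, that is, when $F$ is a polynomial of degree $3$.

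\textbf{Step 4: part (3), the main obstacle.} First compute $[H,\dot H]_{jk}=(\lambda_j-\lambda_k)h_{jk}$. Hence
\begin{equation*}
Tr\big(f'''(H)[H,\dot H]^2\big)=-\underset{j\neq k}{\sum\sum}\tfrac12\big(f'''(\lambda_j)+f'''(\lambda_k)\big)(\lambda_k-\lambda_j)^2|h_{kj}|^2 .
\end{equation*}
With $g=f'$, $a=\lambda_j$, $b=\lambda_k$, the claim reduces to the sign of the corrected-trapezoid functional
\begin{equation*}
L(g)=\frac1{b-a}\int_a^b g-\frac{g(a)+g(b)}2+\frac{(b-a)^2}{24}\big(g''(a)+g''(b)\big).
\end{equation*}
One checks that $L$ annihilates polynomials of degree $\le3$; for example $g=x^2$ and $g=x^3$ on $[0,1]$ give $0$. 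So $L$ depends only on the convexity of $g''=f'''$.

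I would write $L(g)=\int_a^b K(t)\,g''''(t)\,dt$ via the Peano kernel theorem and verify that $K$ has constant sign on $[a,b]$. If $f'''$ is only convex rather than $C^2$, I would instead express $L$ through second differences of $g''$ (or approximate $f'''$ by smooth convex functions), so that convexity of $f'''$ fixes the sign. Determining the sign of $K$, and matching it to the stated direction of the inequality, is the step I expect to require the most care. Equality for all admissible $\dot H$ holds iff $f'''$ is affine, i.e. iff $F$ is a polynomial of degree $5$.
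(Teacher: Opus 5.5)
Steps 1--3 are the paper's own argument. You specialise \eqref{Feynman-Hellmann-sum-rule-general} to the whole spectrum, use the monotonicity of $f$ for (1), and for (2) compare the divided difference (the mean of $f'$ on $[\lambda_j,\lambda_k]$) with the endpoint average. Your remark about the analytic eigenbasis is useful. There, $\langle u_k,\dot H u_j\rangle=0$ whenever $j\neq k$ but $\lambda_j=\lambda_k$, which is immediate from the gap formula \eqref{Feynman-Hellmann-gap-formula}. This is exactly what justifies rewriting the sum over $\lambda_j\neq\lambda_k$ as $Tr(\dot Hf'(H)\dot H)-\sum_jf'(\lambda_j)\dot\lambda_j^2$; the paper uses it without saying so.

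Step 4 is incomplete rather than wrong. Your reduction is correct: the expression for $Tr(f'''(H)[H,\dot H]^2)$ and the constant $\frac1{24}$ in $L$ are right. But you stop exactly where part (3) has its content. You never determine the sign of the Peano kernel, so as written neither the inequality nor its direction is proved. The argument does close. After rescaling to $[0,1]$,
\begin{equation*}
L(g)=(b-a)^4\int_0^1K(t)\,g''''\big(a+t(b-a)\big)\,dt,\qquad K(t)=\tfrac1{24}\,t(1-t)\big(1+t(1-t)\big)\geq0 .
\end{equation*}
As a check, $g=x^4$ on $[0,1]$ gives $L=\frac15=24\int_0^1K$. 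Hence $L\geq0$ when $f'''$ is convex, which is the stated direction.

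This representation uses $g''''=F^{(6)}$, while the hypothesis is only $F\in C^4$. So your approximation (or Stieltjes) step is genuinely required, not optional.

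The paper works one derivative lower and needs neither the Peano kernel nor the extra smoothness. It uses the trapezoid remainder
\begin{equation*}
\frac{f(b)-f(a)}{b-a}=\frac{f'(a)+f'(b)}{2}-\frac{(b-a)^2}{2}\int_0^1t(1-t)\,f'''\big((1-t)a+tb\big)\,dt,
\end{equation*}
which is valid for $f\in C^3$. Convexity of $f'''$ under the symmetric weight $t(1-t)$ then gives $\int_0^1t(1-t)f'''((1-t)a+tb)\,dt\leq\int_0^1t(1-t)\big((1-t)f'''(a)+tf'''(b)\big)\,dt=\frac{f'''(a)+f'''(b)}{12}$, and this is exactly $L\geq0$. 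Your Peano route yields an exact remainder, but only at the cost of that extra regularity or an approximation argument.
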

\begin{rem}
	For certain classes of functions $F$ the inequalities of theorem \ref{Theorem-trace-class-inequalities} provide two-sided bounds for $\displaystyle \frac{d^2}{d\tau^2} Tr(F(H))$ which are consistent since for any positive function $g(\lambda)$ we have
	\begin{equation*}
	Tr(\dot{H}g(H)\dot{H})=\sum_{j=1}^{n}g(\lambda_j)\dot{\lambda}_j^2+g(\lambda_j)\langle(\dot{H}-\dot{\lambda}_j)u_j,(\dot{H}-\dot{\lambda}_j)u_j\rangle
	\end{equation*}
	by the Feynman-Hellmann theorem (for the first derivative) and the last term is nonnegative and in inequality \eqref{eq-trace-improved-convexity-concavity-operator-version} the operator $[H,\dot{H}]^2$ is negative.
\end{rem}
\begin{proof}
Since
\begin{equation*}
	\frac{d^2}{d\tau^2} Tr(F(H))=\sum_{j=1}^nf(\lambda_j)(\ddot{\lambda_j}-\langle \ddot{H}u_j ,u_j\rangle)+\sum_{j=1}^nf(\lambda_j)(\langle \ddot{H}u_j ,u_j\rangle) +f'(\lambda_j)\dot{\lambda_j}^2
\end{equation*}
we obtain replacing the first sum by the sum rule \eqref{Feynman-Hellmann-sum-rule-general} of Theorem \ref{thm-second order Feynman-Hellmann theorem} the following identity:
	\begin{equation}\label{Feynman-Hellmann-sum-rule-traces-general-1}
		\begin{split}
			\frac{d^2}{d\tau^2} Tr(F(H))&=\underset{\lambda_j\neq\lambda_k}{\sum\sum}\frac{f(\lambda_k)-f(\lambda_j)}{\lambda_k-\lambda_j}\big|\langle u_k,\dot{H}u_j\rangle\big|^2\\
			&\quad+\sum_{j=1}^nf(\lambda_j)(\langle \ddot{H}u_j ,u_j\rangle) +f'(\lambda_j)\dot{\lambda_j}^2.\\
		\end{split}
\end{equation}
Now, if $F$ is convex $f=F'$ is increasing and therefore the quotient in the first sum is always nonnegative proving the first statement of the theorem. If $f'$ is convex or concave we write
\begin{equation*}
\frac{f(\lambda_k)-f(\lambda_j)}{\lambda_k-\lambda_j}=\int_0^1f'((1-t)\lambda_j+t\lambda_k)\,dt
\end{equation*}
and if $F$ is of class $C^4$ we have
\begin{equation*}
	\frac{f(\lambda_k)-f(\lambda_j)}{\lambda_k-\lambda_j}=	\frac{f'(\lambda_k)+f'(\lambda_j)}{2}-\frac{(\lambda_k-\lambda_j)^2}{2}\int_0^1t(1-t)f'''((1-t)\lambda_j+t\lambda_k)\,dt.
\end{equation*}
We also note that
\begin{equation*}
	\begin{split}
	&\underset{\lambda_j\neq\lambda_k}{\sum\sum}\frac{f'(\lambda_k)+f'(\lambda_j)}{2}\big|\langle u_k,\dot{H}u_j\rangle\big|^2\\
	&=\underset{\lambda_j,\lambda_k}{\sum\sum}\frac{f'(\lambda_k)+f'(\lambda_j)}{2}\big|\langle u_k,\dot{H}u_j\rangle\big|^2-f'(\lambda_k)\delta_{jk}\big|\langle u_k,\dot{H}u_k\rangle\big|^2\\
	&=\sum_{j}^{n}f'(\lambda_j)\langle \dot{H}u_j,\dot{H}u_j\rangle-f'(\lambda_j)\dot{\lambda_j}^2.
	\end{split}
\end{equation*}
This proves the second statement of the Theorem. For the third statement we apply the gap formula
\begin{equation*}
(\lambda_k-\lambda_j)^2\big|\langle u_k,\dot{H}u_j\rangle\big|^2=\big|\langle u_k,[H,\dot{H}]u_j\rangle\big|^2
\end{equation*}
and the convexity/concavity of $f'''$.
\end{proof}
\subsection{Examples.} We consider the case $H=(1-\tau)A+\tau B$, i.e. $\ddot{H}=0$. First, we derive a consequence of the first inequality \eqref{eq-trace-convexity-concavity} of theorem \ref{Theorem-trace-class-inequalities}.
\begin{cor}\label{cor-trace-convexity-inequalities-1}
Under the conditions of theorem \ref{Theorem-trace-class-inequalities} suppose that $\ddot{H}=0$.Let either $F''>0$ or $F''<0$ and define the function $G$ by $G(\lambda):=\frac{F'(\lambda)^2}{F''(\lambda)}$. Then
\begin{equation}\label{eq-trace-convexity-inequalities-1}
	Tr(G(H))\frac{d^2}{d\tau^2} Tr(F(H))\geq \big(\frac{d}{d\tau}Tr(F'(H))\big)^2.
\end{equation}
In particular, we have the following properties for $H=(1-\tau)A+\tau B$ with $A,B$ positive matrices and $\tau \in [0,1]$.
\begin{enumerate}
	\item If $F>0$ is logarithmically convex then $Tr(F(H))$ is logarithmically convex, i.e.
	\begin{equation}
		Tr(F((1-\tau)A+\tau B))\leq 	Tr(F(A))^{1-\tau}	Tr(F(B))^{\tau}
	\end{equation}
	\item If $F>0$ and $F^{1/p}$ is convex for some $p\geq 1$ then $Tr(F(H))^{1/p}$ is convex, i.e.
	\begin{equation}
		Tr(F((1-\tau)A+\tau B)^{1/p})\leq 	(1-\tau)Tr(F(A))^{1/p}+\tau	Tr(F(B))^{1/p}
	\end{equation}
	\item If $F,F''>0$ and $F^{1/p}$ is concave for some $0<p\leq 1$ then $Tr(F(H))^{1/p}$ is concave, i.e.
	\begin{equation}
		Tr(F((1-\tau)A+\tau B)^{1/p})\geq 	(1-\tau)Tr(F(A))^{1/p}+\tau	Tr(F(B))^{1/p}
	\end{equation}
	\item If $F''>0$ and $F^{-1/p}$ is concave for some $p> 0$ then $Tr(F(H))^{-1/p}$ is concave, i.e.
	\begin{equation}
		Tr(F((1-\tau)A+\tau B)^{-1/p})\geq 	(1-\tau)Tr(F(A))^{-1/p}+\tau	Tr(F(B))^{-1/p}
	\end{equation}
	\item If $F''>0$ and $G(\lambda)\leq a$  for some $a>0$ then $\exp(-\frac{Tr(F(H))}{an}$ is concave. In particular, for $F(\lambda)=\-ln\lambda$ we get that $\det ((1-\tau)A+\tau B))^{1/n}$ is concave in $\tau$, i.e.
	\begin{equation}\label{concavity-of-determinants}
	\det ((1-\tau)A+\tau B))^{1/n}\geq (1-\tau)\det (A)^{1/n}+\tau\det (B)^{1/n}.
	\end{equation}
\end{enumerate}
\end{cor}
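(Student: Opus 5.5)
We prove the main inequality \eqref{eq-trace-convexity-inequalities-1} with a Cauchy--Schwarz argument built on part (1) of Theorem \ref{Theorem-trace-class-inequalities}. Each item (1)--(5) then follows by choosing a suitable auxiliary function. With $\ddot{H}=0$, part (1) (for $F''>0$) gives
\begin{equation*}
\frac{d^2}{d\tau^2}Tr(F(H))\geq \sum_{j=1}^n F''(\lambda_j)\dot{\lambda}_j^2 .
\end{equation*}
By the Feynman--Hellmann theorem, $\frac{d}{d\tau}Tr(F(H))=\sum_j F'(\lambda_j)\dot{\lambda}_j$. (The statement writes $Tr(F'(H))$ here, but the right object is the first derivative of $Tr(F(H))$; I will use that.) Cauchy--Schwarz with weights $F''(\lambda_j)>0$ then gives
\begin{equation*}
\Big(\sum_j F'(\lambda_j)\dot{\lambda}_j\Big)^2\leq \sum_j\frac{F'(\lambda_j)^2}{F''(\lambda_j)}\sum_j F''(\lambda_j)\dot{\lambda}_j^2\leq Tr(G(H))\,\frac{d^2}{d\tau^2}Tr(F(H)).
\end{equation*}
If $F''<0$, both $G$ and the second-derivative bound flip sign, and the same product inequality results. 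Near crossing points I would argue on the intervals of analyticity; $Tr(F(H))$ is symmetric in the eigenvalues, so it is smooth regardless.

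For the items I apply the same scheme to a transformed function. Write $\Phi(\tau)=Tr(F(H))$ and use the identity $\Phi\Phi''-\Phi'^2\ge0$ for log-convexity, or $(\Phi^{1/p})''\ge0 \iff \Phi\Phi''\ge(1-1/p)\Phi'^2$.

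For (1), $F$ log-convex means $FF''\ge F'^2$, so $G\le F$ and $Tr(G(H))\le\Phi$. Together with the main inequality and $\Phi''\ge0$, this gives $\Phi\Phi''\ge\Phi'^2$.

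For (2), $F^{1/p}$ convex is equivalent to $FF''\ge(1-1/p)F'^2$. I would rather not rely on the main inequality here. Instead I redo the Cauchy--Schwarz step directly: $\Phi'^2\le \Phi\sum F'^2\dot\lambda^2/F$, and $\sum F'^2\dot\lambda^2/F\le \frac{p}{p-1}\sum F''\dot\lambda^2\le\frac p{p-1}\Phi''$. That yields $(\Phi^{1/p})''\ge0$.

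Items (3) and (4) are the analogous sign-tracked versions: $F^{-1/p}$ concave means $FF''\ge(1+1/p)F'^2$, which gives $\Phi\Phi''\ge(1+1/p)\Phi'^2$, i.e. $(\Phi^{-1/p})''\le0$.

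For (5), $G\le a$ gives $Tr\,G(H)\le an$, hence $an\Phi''\ge\Phi'^2$, which is exactly $(e^{-\Phi/(an)})''\le0$. For $F=-\ln\lambda$ we have $F'^2/F''=1$, so $a=1$ and $e^{-\Phi/n}=\det(H)^{1/n}$.

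Each concavity or convexity statement in $\tau$ then yields the stated endpoint inequalities through $H(0)=A$ and $H(1)=B$.

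I expect the main obstacle to be the bookkeeping. This means the signs in the cases $F''<0$ and $p<1$, and checking that the general main inequality really suffices for each item rather than needing the sharper weighted Cauchy--Schwarz directly. The fallback is always to apply Cauchy--Schwarz with the weight adapted to the item.
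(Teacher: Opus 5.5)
Your proof of the main inequality is the paper's own: part (1) of Theorem \ref{Theorem-trace-class-inequalities} with $\ddot{H}=0$, then Cauchy--Schwarz with weights $|F''(\lambda_j)|$. You are right that the left-hand square must be $\big(\frac{d}{d\tau}Tr(F(H))\big)^2=\big(\sum_j F'(\lambda_j)\dot{\lambda}_j\big)^2$; this is also what the paper's Cauchy--Schwarz step actually produces. Items (1), (4) and (5) go as in the paper, by comparing $Tr(G(H))$ with $\Phi$, $\frac{p}{p+1}\Phi$ or $an$. Your form $an\,\Phi''\geq\Phi'^2$ in (5) is the correct one; the paper's proof has $(an)^{-1}$ by mistake. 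In (2), your direct Cauchy--Schwarz with weights $F(\lambda_j)$ is a harmless variant of inserting $G\leq\frac{p}{p-1}F$ into the main inequality. It needs only $F''\geq 0$, and the case $p=1$ is just part (1) of the theorem.

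The one item you have not actually proved is (3). Calling it an ``analogous sign-tracked version'' hides a real issue.

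\begin{itemize}
\item As written ($F>0$, $F''>0$), the main inequality only bounds $\Phi''$ from below. Concavity of $\Phi^{1/p}$ needs the upper bound $\Phi\Phi''\leq(1-\frac1p)\Phi'^2$, so no analogue of your argument can work.
\item The hypotheses are in fact inconsistent: $(F^{1/p})''=\frac1p F^{\frac1p-2}\big(FF''+(\frac1p-1)F'^2\big)>0$ when $F,F''>0$ and $0<p\leq1$. So $F^{1/p}$ is never concave, and (3) is vacuous as stated.
\item The intended version has $F''<0$. It yields the reverse Minkowski inequality for $F(\lambda)=\lambda^p$, $0<p<1$, which the paper's remark mentions.
\end{itemize}

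In that intended case, divide $FF''\leq(1-\frac1p)F'^2$ by $F''<0$ to get $-G\leq\frac{p}{1-p}F$. Then, using $\Phi''\leq 0$, you get $\Phi'^2\leq Tr(G(H))\,\Phi''=\big(-Tr(G(H))\big)(-\Phi'')\leq\frac{p}{1-p}\Phi\,(-\Phi'')$. This is $\Phi\Phi''\leq(1-\frac1p)\Phi'^2$, i.e. $(\Phi^{1/p})''\leq 0$. With this added, your proposal is complete.
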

\begin{proof}
	Inequality \eqref{eq-trace-convexity-inequalities-1} follows from applying the Cauchy-Schwarz inequality
	\begin{equation*}
		\sum_{j=1}^nF''(\lambda_j)\dot{\lambda_j}^2\geq \bigg(\sum_{j=1}^nF'(\lambda_j)\dot{\lambda_j}\bigg)^2/\sum_{j=1}^n\frac{F'(\lambda_j)^2}{F''(\lambda_j)}
	\end{equation*}
	in the case $F''>0$ and similarly when $F''<0$. The proof of other inequalities is elementary rewriting the differential inequality \eqref{eq-trace-convexity-inequalities-1} for $Tr(F(H))$ in an appropriate way. For example, if $F$ is log-convex, then $F''F\geq F'^2$. Hence by inequality \eqref{eq-trace-convexity-inequalities-1} we have
	\begin{equation*}
		Tr(F(H))\frac{d^2}{d\tau^2} Tr(F(H))\geq \big(\frac{d}{d\tau}Tr(F'(H))\big)^2.
	\end{equation*}
	Similarly we prove the inequalities for powers of $F$. If $F''>0$ and $G(\lambda)\leq a$  for some $a>0$ then
		\begin{equation*}
	(an)^{-1}\frac{d^2}{d\tau^2} Tr(F(H))\geq \big(\frac{d}{d\tau}Tr(F'(H))\big)^2,
	\end{equation*}
	i.e.
	\begin{equation*}
		\frac{d^2}{d\tau^2} \exp(-\frac{Tr(F(H))}{an}\leq 0
	\end{equation*}
	proving the claim. Similarly we prove the inequalities for powers of $F$.
\end{proof}
\begin{rem}
	When $H=(1-\tau)A+\tau B$ and $F''\geq 0$ then the first statement of theorem \ref{Theorem-trace-class-inequalities} that $\phi_F(\tau):=Tr(F((1-\tau)A+\tau B))$ is convex implies Klein's inequality (see e.g.\cite{Car2010},\cite{Petz1994})
	\begin{equation}\label{eq-Kleins-inequality}
		Tr(F(B)-F(A)-F'(A)(B-A))\geq 0
	\end{equation}
	since by convexity $\dot{\phi}_F(0)\leq \tau^{-1}(\phi_F(\tau)-\phi_F(0)) $ for all $0<\tau\leq 1$.
	We also note the following chain of inequalities for convex functions $\phi_F(\tau)$:
		\begin{equation}\label{eq-two-sided-convex-trace-inequality}
		0\leq (1-\tau)\phi_F(0)+\tau \phi_F(1)-\phi_F(\tau) \leq\tau(1-\tau)Tr((F'(B)-F'(A))(B-A)).
	\end{equation}
\end{rem}
\begin{rem}
	The logarithmic convexity of $Tr(e^H)$, $H=(1-\tau)A+\tau B$ is also known as the Peierls-Bogoliubov inequality (\cite{Car2010}).  The convexity of $Tr(H^p)^{1/p}$ implies Minkowski's inequality for Schatten-p-norms of (here positive hermitian) matrices, the concavity of $Tr(H^p)^{1/p}$ implies a reversed Minkowski-inequality. The concavity of the determinant in the last statement of the corollary can be found in \cite{BB1961}. The log-concavity can be seen by applying the arithmetic-geometric-mean inequality in eq.\eqref{concavity-of-determinants}.
\end{rem}
\begin{rem}
	The conclusions of corollary \ref{cor-trace-convexity-inequalities-1} with exception of point (5) also hold for self-adjoint operators on infinite dimensional Hilbert spaces provided all traces are finite. See also .... for more details.
\end{rem}
In the following corollary we make a more precise use of the expression $Tr(G(H))$.
\begin{cor}\label{cor-trace-convexity-inequalities-2}
	Under the conditions of theorem \ref{Theorem-trace-class-inequalities} suppose that $\ddot{H}=0$. Let $F$ be twice differentiable with twice differentiable inverse $F^{-1}$ and define
	\begin{equation}\label{def-A-function-trace-convexity}
		A(y):=-\,\frac{(F^{-1})'(y)}{(F^{-1})''(y)}.
	\end{equation} 
The following properties hold:
	\begin{enumerate}
		\item Let $F'(\lambda)>0$ and $F''(\lambda)>0$. If $A(y)$ is a concave function then $F^{-1}(\frac{Tr(F(H))}{n})$ is convex.
		\item Let $F'(\lambda)<0$ and $F''(\lambda)>0$. If $A(y)$ is a concave function then $F^{-1}(\frac{Tr(F(H))}{n})$ is concave.
		\item Let $F'(\lambda)>0$ and $F''(\lambda)<0$. If $A(y)$ is a concave function then $F^{-1}(\frac{Tr(F(H))}{n})$ is concave.
		\item Let $F'(\lambda)<0$ and $F''(\lambda)<0$. If $A(y)$ is a convex function then $F^{-1}(\frac{Tr(F(H))}{n})$ is  convex.
	\end{enumerate}
\end{cor}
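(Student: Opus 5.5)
The plan is to apply Corollary \ref{cor-trace-convexity-inequalities-1}, or more precisely its Cauchy--Schwarz step built on the first statement of Theorem \ref{Theorem-trace-class-inequalities}, to the normalized trace, and then pass to $F^{-1}$ by Jensen's inequality applied to the function $A$. Set $S(\tau):=\frac1n Tr(F(H))=\frac1n\sum_j F(\lambda_j)$, $g:=F^{-1}$ and $\varphi(\tau):=g(S(\tau))$. Then
\begin{equation*}
\ddot\varphi=g''(S)\dot S^2+g'(S)\ddot S .
\end{equation*}
The key observation is that the auxiliary function $G=F'^2/F''$ of Corollary \ref{cor-trace-convexity-inequalities-1} is exactly $A\circ F$. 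Write $y=F(\lambda)$. Then $F'(\lambda)=1/g'(y)$ and $F''(\lambda)=-g''(y)/g'(y)^3$, so
\begin{equation*}
\frac{F'(\lambda)^2}{F''(\lambda)}=-\frac{g'(y)}{g''(y)}=A(y).
\end{equation*}

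For case (1), $F'>0$ and $F''>0$, so $F$ is convex and increasing, $g'>0$, $g''<0$ and hence $A>0$. Since $\ddot H=0$, the first statement of Theorem \ref{Theorem-trace-class-inequalities} gives $n\ddot S\ge\sum_j F''(\lambda_j)\dot\lambda_j^2$. The Feynman--Hellmann theorem gives $n\dot S=\sum_jF'(\lambda_j)\dot\lambda_j$. Cauchy--Schwarz, as in the proof of Corollary \ref{cor-trace-convexity-inequalities-1}, then yields
\begin{equation*}
\ddot S\ \ge\ \frac{n\,\dot S^2}{\sum_j A(y_j)},\qquad y_j=F(\lambda_j).
\end{equation*}
Because $A$ is concave, Jensen gives $\frac1n\sum_jA(y_j)\le A\big(\frac1n\sum_j y_j\big)=A(S)$. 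Since $A>0$, this gives $\ddot S\ge \dot S^2/A(S)$. Multiplying by $g'(S)>0$ gives $g'(S)\ddot S\ge -g''(S)\dot S^2$, that is, $\ddot\varphi\ge0$.

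The remaining cases follow the same chain, and the only work is sign bookkeeping, which I expect to be the main (though elementary) obstacle. Four signs must be tracked: the direction of the inequality in Theorem \ref{Theorem-trace-class-inequalities}(1), which is reversed when $F''<0$; the sign of $A=F'^2/F''$, which is that of $F''$; the direction of Cauchy--Schwarz when the weights $F''$ are negative, where one applies it to $-F''$; and the sign of $g'$, which is that of $F'$ and determines whether multiplying by $g'(S)$ flips the inequality. I will verify each case separately as follows.
\begin{itemize}
\item Case (2), $F'<0$, $F''>0$: we still get $\ddot S\ge\dot S^2/A(S)$ with $A>0$ concave. Multiplying by $g'<0$ gives $\ddot\varphi\le0$.
\item Case (3), $F''<0$: apply the whole argument to $-F$. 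This turns $A$ into $-A$ and the concave upper bound into $\ddot S\le \dot S^2/A(S)$ with $A<0$, where concavity of $A$ again gives the correct Jensen direction because $-A$ is convex and positive. Multiplying by $g'>0$ gives concavity of $\varphi$.
\item Case (4), $F'<0$, $F''<0$: the hypothesis changes to convexity of $A$, since the Jensen direction must flip to compensate for $g'<0$. This yields convexity of $\varphi$.
\end{itemize}
Finally, I will check that $A(S)$ is well defined. Since $S$ is an average of values $F(\lambda_j)$, it lies in the range of $F$, so $A(S)$ makes sense on an interval on which $F$ is invertible.
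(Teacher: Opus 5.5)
Your cases (1) and (2) are correct and match the paper's argument; the paper writes out only case (1) and calls the other three ``similar''. The gap is in case (3).

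There $F''<0$, so $A<0$. The reversed inequality of Theorem \ref{Theorem-trace-class-inequalities}(1), together with Cauchy--Schwarz using the weights $-F''$, gives $\ddot S\le \dot S^2/M$ with $M:=\frac1n\sum_jA(y_j)<0$. To reach $\ddot S\le\dot S^2/A(S)$, and then $\ddot\varphi\le0$ after multiplying by $g'(S)>0$, you need $1/M\le 1/A(S)$. For negative numbers this means $M\ge A(S)$, which is Jensen for a \emph{convex} $A$. Concavity of $A$ gives $M\le A(S)$, hence $\dot S^2/M\ge\dot S^2/A(S)$, which is useless.

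Your phrase ``$-A$ is convex and positive'' points the wrong way. For the positive function $-A$ you would need $\frac1n\sum_j(-A)(y_j)\le(-A)(S)$, i.e. $-A$ concave. Your own reduction to $-F$ shows the same thing: for $\tilde F=-F$ one has $\tilde A(\tilde y)=-A(-\tilde y)$, and case (2) applied to $\tilde F$ requires $\tilde A$ concave, i.e. $A$ convex.

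Your explanation of case (4) is also off, although its conclusion is right. The sign of $g'$ only decides convex versus concave in the last step; cases (1) and (2) both use concave $A$ although $g'$ has opposite signs. The Jensen direction is dictated by the sign of $A$, i.e. of $F''$. So (3) and (4) need the same hypothesis, namely $A$ convex.

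This cannot be repaired, because item (3) as stated is false. Take $g(y)=y+y^3$ on $(0,\infty)$ and $F=g^{-1}$. Then $F'>0$ and $F''<0$, while $A(y)=-\frac{1}{6y}-\frac y2$ is strictly concave. Let $H(\tau)=\mathrm{diag}\big(2-\tfrac83\tau,\;30-\tfrac{392}{9}\tau\big)$, so $\ddot H=0$, $F(2)=1$ and $F(30)=3$. The slopes are $\dot\lambda_j=g'(y_j)A(y_j)$, which gives $\dot y_j=\ddot y_j=A(y_j)$. Hence $S(0)=2$ and $\dot S(0)=\ddot S(0)=-\frac{10}{9}$, and
\[
\ddot\varphi(0)=g''(2)\dot S^2+g'(2)\ddot S=\frac{1200}{81}-\frac{130}{9}=\frac{10}{27}>0 .
\]
More generally, for diagonal families with $F'(\lambda_j)\dot\lambda_j=A(y_j)$ one gets $\ddot\varphi=g'(S)\,M\,\big(1-M/A(S)\big)$. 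In the sign regime of (3) this is positive exactly when $M<A(S)$, which happens whenever $A$ is strictly concave and the $y_j$ are not all equal.

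So (3) must assume $A$ \emph{convex}, and with that hypothesis your chain goes through verbatim. This is consistent with the paper's own entropy example $F(\lambda)=\lambda-\lambda\ln\lambda$. There one finds $\frac{d^2A}{dy^2}=-\frac{1}{\lambda\ln\lambda}>0$, so $A$ is convex, not concave as asserted. It is the corrected version of (3) that yields the concavity of $\psi$ in that example.
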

\begin{proof}
	By the definition of $A(y)$ we obviously have $A(F(\lambda))=G(\lambda)=\frac{F'(\lambda)^2}{F''(\lambda)}$. Let $\phi(\tau):=\frac{Tr(F(H))}{n}$. We compute
	\begin{equation*}
		\frac{d^2}{d\tau^2} F^{-1}(\phi(\tau))=(F^{-1})''(F(\phi(\tau)))\cdot \dot{\phi}(\tau)^2+(F^{-1})'(\phi(\tau))\cdot \ddot{\phi}(\tau).
	\end{equation*}
	Let $F'(\lambda)>0$ and $F''(\lambda)>0$. By eq. \eqref{eq-trace-convexity-concavity} of theorem \ref{Theorem-trace-class-inequalities} and the assumption that $\ddot{H}=0$ we have
	\begin{equation*}
		\ddot{\phi}(\tau)\geq \frac{1}{n}\sum_{j=1}^nF''(\lambda_j)\dot{\lambda_j}^2
	\end{equation*}
	and therefore as in the proof of corollary\ref{cor-trace-convexity-inequalities-1} by the Cauchy-Schwarz inequality and the definition of $A(y)$:
	\begin{equation*}
		\ddot{\phi}(\tau)\geq \frac{\dot{\phi}(\tau)^2}{\frac{1}{n}\sum_{j=1}^nA(F(\lambda_j))}.
	\end{equation*}
	By assumption $A(y)$ is concave therefore by Jensen's inequality
	\begin{equation*}
		\frac{1}{n}\sum_{j=1}^nA(F(\lambda_j))\leq A(\frac{1}{n}\sum_{j=1}^nF(\lambda_j))=A(\phi(\tau)).
	\end{equation*}
	Since $F'(\lambda)>0$ we also have $(F^{-1})'(y)>0$ and therefore we get the differential inequality
	\begin{equation*}
		\frac{d^2}{d\tau^2} F^{-1}(\phi(\tau))\geq(F^{-1})''(F(\phi(\tau)))\cdot \dot{\phi}(\tau)^2+(F^{-1})'(\phi(\tau))\cdot \frac{\dot{\phi}(\tau)^2}{A(\phi(\tau))} .
	\end{equation*}
	By the definition of $A(y)$ the r.h.s is equal to zero proving the claim. Similarly we prove the remaining three inequalities.
\end{proof}
\begin{rem}
	When at least one $\dot{\lambda}_j\neq 0$ the inequalities are even strict which follows from the strict concavity/convexity of $F$ and $F^{-1}$.
\end{rem}
\begin{rem}
	The functions $\lambda^p, e^{\lambda}, \ln\lambda$ discussed before are also covered by the corollary. Another example is $F(\lambda)=-\lambda\ln \lambda +\lambda$, $0<\lambda\leq 1$ applied to density matrices, i.e. positive hermitian matrices with trace equal to $1$.  $F(\lambda)$ is strictly concave on $]0,1]$ and because of the linear term also strictly increasing. The quantity $S(H):=Tr(-H\ln H)$ is called the von Neumann entropy of a density matrix $H$. As before let $\phi_F(\tau)=\frac{Tr(F((1-\tau)A+\tau B))}{n}$ where $A$and $B$ are density matrices. $\phi_F(\tau)$ is strictly concave and Klein's inequality \eqref{eq-Kleins-inequality} reads as follows
	\begin{equation*}
		Tr(B\ln A) \leq Tr(B\ln B)=-S(B)
	\end{equation*}
	Since $\phi_F(\tau)$ is concave we also have
	\begin{equation*}
	\phi_F(\tau)\leq (1-\tau)\phi_F(0)+\tau\phi_F(\tau)+\tau(1-\tau)\big(\dot{\phi}_F(0)-\dot{\phi}_F(1)\big)
	\end{equation*}
	which is equivalent to
	\begin{equation*}
		S((1-\tau)A+\tau B))\leq (1-\tau)S(A)+\tau S(B)+\tau(1-\tau)Tr((\ln B-\ln A)(B-A)),
	\end{equation*}
	i.e.
	\begin{equation*}
		S((1-\tau)A+\tau B))\leq (1-\tau)^2S(A)+\tau^2S(B)-\tau(1-\tau)Tr(A\ln B+B\ln A).
	\end{equation*}
	The inverse of $F$ is given by
	\begin{equation*}
		F^{-1}(y)=e^{1+W_{-1}(-y/e)}
	\end{equation*}
	where $W_{-1}$ denotes the negative branch of the Lambert-W function $W(y)$ which is defined by $y=W(y) e^{W(y)}$ and $W_{-1}<-1$ (see e.g.\cite{CGHJK1996} ). $F^{-1}$ is strictly increasing and strictly convex. It is easy to show that $A(y)$ defined in \eqref{def-A-function-trace-convexity} is (strictly) concave. Therefore the function $\psi(t)$ defined by
	\begin{equation}
	\psi(t):=F^{-1}(\frac{Tr(F((1-\tau)A+\tau B))}{n})=F^{-1}(\frac{1+S((1-\tau)A+\tau B)}{n})
	\end{equation}
	is strictly concave in $\tau$. In particular $\dot{	\psi}(0)\geq 	\psi(1)-\psi(0)$, that is
	\begin{equation*}
		(F^{-1})'(\frac{Tr(F(A))}{n})\frac{Tr(F'(A)(B-A))}{n}\geq F^{-1}(\frac{Tr(F(B)))}{n})-F^{-1}(\frac{Tr(F(A))}{n})
	\end{equation*}
	which improves Klein's inequality since $F^{-1}$ is convex. Also note that $(F^{-1})'(y)=\frac{1}{-\ln F^{-1}(y)}$.
\end{rem}

\subsection{Inequalities for the bottom of the spectrum.}
Here we extend the conclusions of corollary \ref{cor-trace-convexity-inequalities-2} under slightly more restrictive conditions on $F$ for finite sums over the bottom of the spectrum of the operators $H(\tau)$.

\begin{cor}\label{cor-trace-convexity-inequalities-3-bottom-of-spectrum}
	Let $H=H_{\tau}$, $\tau$ real,  be an analytic one parameter family of self-adjoint operators on a  Hilbert space $(\mathcal{H},\langle,\rangle)$ satisfying the conditions of theorem \ref{thm-second order Feynman-Hellmann theorem}.
	Let $m$ be a positive integer. Suppose that $\lambda_1(\tau)<\ldots<\lambda_{m+1}(\tau)$ are isolated eigenvalues of the bottom of the spectrum. Let $F$ be a twice differentiable function defined on an interval containing the spectra of $H_{\tau}$ and let $f=F'$ be the derivative function of $F$. Then the following assertions hold.
	\begin{enumerate}
		\item  Suppose that $\ddot{H}\leq 0$. If $F'\leq 0$, $F''>0$ then
		\begin{equation}\label{eq-1-cor-trace-convexity-inequalities-3-bottom-of-spectrum}
			\frac{d^2}{d\tau^2} \sum_{j=1}^m F(\lambda_j)\geq \bigg(  \sum_{j=1}^m F'(\lambda_j)\dot{\lambda}_j\bigg)^2/ \sum_{j=1}^m \frac{ F'^2(\lambda_j)}{ F''(\lambda_j)}
		\end{equation}
		In particular, if $F'<0$ and the function $A(y)$ defined in eq.\eqref{def-A-function-trace-convexity} of corollary \ref{cor-trace-convexity-inequalities-2} is concave then
		\begin{equation*}
			F^{-1}(\frac{1}{m}\sum_{j=1}^m F(\lambda_j)) 
		\end{equation*}
		is a concave function of $\tau$.  
		\item Suppose that $\ddot{H}\leq 0$. If $F'\geq 0$, $F''<0$ then
		\begin{equation}\label{eq-2-cor-trace-convexity-inequalities-3-bottom-of-spectrum}
			\frac{d^2}{d\tau^2} \sum_{j=1}^m F(\lambda_j)\leq \bigg(  \sum_{j=1}^m F'(\lambda_j)\dot{\lambda}_j\bigg)^2/ \sum_{j=1}^m \frac{ F'^2(\lambda_j)}{ F''(\lambda_j)}
		\end{equation}
		In particular, if $F'>0$ and the function $A(y)$ defined in eq.\eqref{def-A-function-trace-convexity} of corollary \ref{cor-trace-convexity-inequalities-2} is concave then
		\begin{equation*}
			F^{-1}(\frac{1}{m}\sum_{j=1}^m F(\lambda_j)) 
		\end{equation*}
		is a concave function of $\tau$.
		\item If $F$ and $F''$ are concave functions such that $2F'(\lambda_j) +F''(\lambda_j)(\lambda_{m+1}-\lambda_j)\leq 0$ for all $j=1,\ldots m$ then
			\begin{equation}\label{eq-3-cor-trace-convexity-inequalities-3-bottom-of-spectrum}
			\frac{d^2}{d\tau^2} \sum_{j=1}^m F(\lambda_j)\geq  \sum_{j=1}^m F''(\lambda_j)\langle\dot{H}u_j,\dot{H}u_j\rangle+F'(\lambda_j)\langle\ddot{H}u_j,u_j\rangle.
		\end{equation}
	\end{enumerate}
\end{cor}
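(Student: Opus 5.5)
We take $J=\{\lambda_1,\dots,\lambda_m\}$ and $f=F'$ throughout. Since $\lambda_{m+1}$ is isolated and the eigenvalues are simple on the interval considered, every $\lambda_k\notin J$ satisfies $\lambda_k\geq\lambda_{m+1}>\lambda_j$ for all $\lambda_j\in J$. The starting point is the identity
\begin{equation*}
\frac{d^2}{d\tau^2}\sum_{j=1}^m F(\lambda_j)=\sum_{j=1}^m f(\lambda_j)\ddot{\lambda}_j+f'(\lambda_j)\dot{\lambda}_j^2 .
\end{equation*}
We replace $\sum_{j\in J} f(\lambda_j)\ddot{\lambda}_j$ using the splitting formula \eqref{Feynman-Hellmann-sum-rule-general-splittig-with-J} from the proof of Theorem \ref{thm-second order Feynman-Hellmann theorem}. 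This gives $\sum_J f(\lambda_j)\langle\ddot{H}u_j,u_j\rangle$, plus an inner double sum over $J\times J$ with weights $\frac{f(\lambda_k)-f(\lambda_j)}{\lambda_k-\lambda_j}$, minus the outer sum $\sum_{j\in J}\sum_{k\notin J}\frac{2f(\lambda_j)}{\lambda_k-\lambda_j}|\langle u_k,\dot{H}u_j\rangle|^2$. The proof of each assertion then reduces to a sign check of these three pieces.

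For (1), $F''>0$ makes $f$ increasing, so the inner double sum is $\geq 0$. Since $f\leq 0$ and $\lambda_k-\lambda_j>0$, the outer term $-2f(\lambda_j)/(\lambda_k-\lambda_j)$ is $\geq0$. Since $f\leq0$ and $\ddot{H}\leq0$, we also have $f(\lambda_j)\langle\ddot{H}u_j,u_j\rangle\geq0$. Hence
\begin{equation*}
\frac{d^2}{d\tau^2}\sum_{j=1}^m F(\lambda_j)\geq\sum_{j=1}^m F''(\lambda_j)\dot{\lambda}_j^2 .
\end{equation*}
The Cauchy--Schwarz inequality, applied exactly as in the proof of Corollary \ref{cor-trace-convexity-inequalities-1}, then yields \eqref{eq-1-cor-trace-convexity-inequalities-3-bottom-of-spectrum}. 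The concavity of $F^{-1}\big(\frac1m\sum F(\lambda_j)\big)$ is obtained by repeating the computation of the proof of Corollary \ref{cor-trace-convexity-inequalities-2}, case (2), with $n$ replaced by $m$: we use $A(F(\lambda))=F'^2/F''$, Jensen's inequality for the concave $A$, and the sign of $(F^{-1})'<0$.

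For (2), all three signs reverse. Now $f$ is decreasing, so the inner sum is $\leq0$. Since $f\geq0$, the outer contribution is $\leq0$, and $f(\lambda_j)\langle\ddot{H}u_j,u_j\rangle\leq0$. Hence $\frac{d^2}{d\tau^2}\sum F(\lambda_j)\leq\sum F''(\lambda_j)\dot{\lambda}_j^2$. Writing $F''=-|F''|$, Cauchy--Schwarz gives
\begin{equation*}
\sum_{j=1}^m |F''(\lambda_j)|\dot{\lambda}_j^2\;\geq\;\Big(\sum_{j=1}^m F'(\lambda_j)\dot{\lambda}_j\Big)^2\Big/\sum_{j=1}^m \frac{F'^2(\lambda_j)}{|F''(\lambda_j)|},
\end{equation*}
which is exactly \eqref{eq-2-cor-trace-convexity-inequalities-3-bottom-of-spectrum}. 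The concavity statement then follows as in case (3) of Corollary \ref{cor-trace-convexity-inequalities-2}.

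For (3), we instead invoke inequality \eqref{Feynman-Hellmann-sum-rule-general-ineq-f-prime-concave} of Theorem \ref{thm-second order Feynman-Hellmann theorem}, which applies because $f'=F''$ is concave. This leaves us to show that the remainder term
\begin{equation*}
-\sum_{j\in J}\sum_{k\notin J}\frac{2f(\lambda_j)+f'(\lambda_j)(\lambda_k-\lambda_j)}{\lambda_k-\lambda_j}\big|\langle u_k,\dot{H}u_j\rangle\big|^2
\end{equation*}
is nonnegative. Here the concavity of $F$ enters: $f'(\lambda_j)=F''(\lambda_j)\leq 0$, so the numerator is nonincreasing in $\lambda_k$. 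Since $\lambda_k\geq\lambda_{m+1}$, the numerator is therefore bounded above by $2F'(\lambda_j)+F''(\lambda_j)(\lambda_{m+1}-\lambda_j)$, which is $\leq0$ by hypothesis. Dropping the remainder gives \eqref{eq-3-cor-trace-convexity-inequalities-3-bottom-of-spectrum}.

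The main obstacle is not computational but the careful sign bookkeeping in the outer sum, namely using $\lambda_k>\lambda_j$ for all $k\notin J$. It also requires ensuring, as in the remarks following Theorem \ref{thm-second order Feynman-Hellmann theorem}, that no eigenvalue enters or leaves $J$ on the $\tau$-interval considered. The hypothesis that $\lambda_1<\dots<\lambda_{m+1}$ are isolated and simple is what guarantees this.
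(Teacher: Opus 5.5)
Your reduction via the splitting formula \eqref{Feynman-Hellmann-sum-rule-general-splittig-with-J}, the sign checks that give the main inequalities of (1) and (2), and the concavity statement in (1) are the paper's argument and are correct. The same holds for your treatment of (3), where you also make explicit the role of $F''\le 0$ in passing from $\lambda_{m+1}$ to all $\lambda_k\ge\lambda_{m+1}$, which the paper leaves implicit.

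The gap is the concavity statement in (2). You, like the paper, delegate it to case (3) of Corollary~\ref{cor-trace-convexity-inequalities-2}, and that case contains a sign slip.

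Put $\phi=\frac1m\sum_{j=1}^m F(\lambda_j)$ and $g=F^{-1}$. For $F'>0>F''$ one has $g'>0$, $g''>0$ and $A=-g'/g''<0$, hence $a:=\frac1m\sum_{j=1}^m A(F(\lambda_j))<0$. Your bound $\frac{d^2}{d\tau^2}\sum F(\lambda_j)\le\sum F''(\lambda_j)\dot\lambda_j^2$ together with Cauchy--Schwarz gives $\ddot\phi\le\dot\phi^2/a$, so
\[
\frac{d^2}{d\tau^2}\,g(\phi)\le \dot\phi^2\Big(g''(\phi)+\frac{g'(\phi)}{a}\Big)=g'(\phi)\,\dot\phi^2\Big(\frac1a-\frac1{A(\phi)}\Big).
\]
Since $a$ and $A(\phi)$ are both negative, this is $\le 0$ only when $a\ge A(\phi)$. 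That is Jensen's inequality for a \emph{convex} $A$. Concavity of $A$ gives $a\le A(\phi)$, which is the useless direction.

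The assertion as stated is in fact false. Let $g(y)=e^y+y$ and $F=g^{-1}$ on $\mathbb{R}$. Then $F'=(e^{F}+1)^{-1}>0$, $F''=-e^{F}(e^{F}+1)^{-3}<0$, and $A(y)=-1-e^{-y}$ is strictly concave. Take $m=2$ and $H(\tau)=\mathrm{diag}\big(1+4\tau,\;e^2+2+(e^2+1)^2e^{-2}\tau,\;100\big)$, so $\ddot H=0$. At $\tau=0$ one gets $\phi=1$ and $\dot\phi=-\ddot\phi=\frac{3+e^{-2}}{2}$, hence
\[
\frac{d^2}{d\tau^2}g(\phi)=e\dot\phi^2+(e+1)\ddot\phi=\dot\phi\,(\cosh 1-1)>0 .
\]

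The hypothesis in the second half of (2), and in case (3) of Corollary~\ref{cor-trace-convexity-inequalities-2}, must be that $A$ is convex. With that correction the computation you cite closes.
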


\begin{proof}
As in the proof of theorem \ref{Theorem-trace-class-inequalities} we compute
\begin{equation*}
	\frac{d^2}{d\tau^2} \sum_{j=1}^m F(\lambda_j)=\sum_{j=1}^mf(\lambda_j)(\ddot{\lambda_j}-\langle \ddot{H}u_j ,u_j\rangle)+\sum_{j=1}^mf(\lambda_j)(\langle \ddot{H}u_j ,u_j\rangle) +F''(\lambda_j)\dot{\lambda_j}^2
\end{equation*}
The first sum we replace by equation \ref{Feynman-Hellmann-sum-rule-general-splittig-with-J} from the proof of our main theorem \ref{thm-second order Feynman-Hellmann theorem}. We obtain
\begin{equation}\label{eq-Feynman-Hellmann-bottom-of-spetrum}
	\begin{split}
		\frac{d^2}{d\tau^2} \sum_{j=1}^m F(\lambda_j)&=\underset{\lambda_j\neq\lambda_k}{\sum_{j=1}^m\sum_{k=1}^m}\frac{f(\lambda_k)-f(\lambda_j)}{\lambda_k-\lambda_j}\big|\langle u_k,\dot{H}u_j\rangle\big|^2 -\sum_{j=1}^m\sum_{k=m+1}^m\frac{2\,f(\lambda_j)}{\lambda_k-\lambda_j}\big|\langle u_k,\dot{H}u_j\rangle\big|^2\\
		&+\sum_{j=1}^mf(\lambda_j)(\langle \ddot{H}u_j ,u_j\rangle) +F''(\lambda_j)\dot{\lambda_j}^2.\\
	\end{split}
\end{equation}
\begin{enumerate}
	\item If $F'=f<0$ and $F''>0$ then $f<0$ is strictly increasing and therefore the first two terms are positive. The third term is positive since $ \ddot{H}\leq 0$. The rest follows from the proofs of corollary \ref{cor-trace-convexity-inequalities-1} and corollary \ref{cor-trace-convexity-inequalities-2}.
	\item The proof in the case $F'=f>0$ and $F''<0$ follows the same lines.
	\item If $F''=f'$ is concave we bound the first term from below as in the proof of our main theorem \ref{thm-second order Feynman-Hellmann theorem} and the assertion follows from the additional hypothesis on $2F'(\lambda_j) +F''(\lambda_j)(\lambda_{m+1}-\lambda_j)$.
\end{enumerate}
\end{proof}

\begin{rem}
	We do not know simple criteria for matrices to have only isolated eigenvalues, one dimensional Schr\"{o}dinger-type operators or regular Sturm-Liouville problems will satisfy this condition of the above corollary, see Section 5.
\end{rem}

\subsection{Eigenvalues of a Hermitian matrix sum.}
We consider the case where $H=(1-\tau)A+\tau B$ is a convex linear combination of two hermitian $n\times n$ matrices $A,B$. In this case we denote the eigenvalues $\lambda_j=\lambda_j(\tau)$ of $H$ non-decreasing in $j$ also by  $\lambda_j((1-\tau)A+\tau B)$ and $\lambda_j(A):=\alpha_j$, $\lambda_j(B):=\beta_j$.

From eq. \eqref{Feynman-Hellmann-sum-rule-constant} we conclude that for any positive integer $m$ the function
\begin{equation}\label{definition-theta-m}
	\theta_m(\tau):=\sum_{j=1}^m\lambda_j((1-\tau)A+\tau B)
\end{equation}
is a concave function of $\tau$. Consequently for all $\tau \in [0,1]$,
\begin{equation}\label{theta-m-concavity}
	0\geq (1-\tau)\theta_m(0)+\tau \theta_m(1) - \theta_m(\tau)\geq \tau(1-\tau)(\dot{\theta}_m(1)-\dot{\theta}_m(0))
\end{equation}
where the second inequality follows from the fact that $\theta_m(\tau)$ is a differentiable concave function. We therefore have the following result. 
\begin{prop}
	Let $A,B$ be two $n\times n$ hermitian matrices with eigenvalues $\alpha_1\leq\ldots\leq \alpha_n$ and $\beta_1\leq\ldots\leq \beta_n$, respectively. For $\tau \in [0,1]$ let $H=(1-\tau)A+\tau B$ with (non-decreasing) eigenvalues denoted by $\lambda_j((1-\tau)A+\tau B)$. then the following inequalities hold
	\begin{equation}\label{A-B-eigenvalue-concavity-bottom-1}
		\sum_{j=1}^m(1-\tau)\alpha_j+\tau \beta_j\leq 	\sum_{j=1}^m\lambda_j((1-\tau)A+\tau B)
	\end{equation}
	and
	\begin{equation}\label{A-B-eigenvalue-concavity-bottom-2}
		\sum_{j=1}^m\lambda_j((1-\tau)A+\tau B)\leq	\sum_{j=1}^m(1-\tau)^2\alpha_j+\tau^2 \beta_j+(1-\tau) \tau(\alpha_{n+1-j}+\beta_{n+1-j}).
	\end{equation}
	In particular,
	\begin{equation}\label{A+B-eigenvalue-bounds}
		\sum_{j=1}^m\alpha_j+\beta_j\leq\sum_{j=1}^m\lambda_j(A+B)\leq	\frac{1}{2}\sum_{j=1}^m\alpha_j+\beta_j+\alpha_{n+1-j}+\beta_{n+1-j}.
	\end{equation}
\end{prop}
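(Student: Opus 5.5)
The plan is to obtain both inequalities from the concavity of $\theta_m(\tau)$ defined in \eqref{definition-theta-m}, together with a Ky Fan type estimate for its one-sided slopes at $\tau=0$ and $\tau=1$. To avoid problems at eigenvalue crossings, I would realize concavity through the variational (Ky Fan) characterization
$$\theta_m(\tau)=\min\Big\{\sum_{j=1}^m\langle H(\tau)v_j,v_j\rangle:\ v_1,\dots,v_m \text{ orthonormal}\Big\},$$
rather than only through the sum rule \eqref{Feynman-Hellmann-sum-rule-constant} with $\ddot H=0$. The sum rule gives concavity on intervals where $\lambda_m<\lambda_{m+1}$, since each term $(\lambda_j-\lambda_k)^{-1}$ with $j\le m<k$ is negative. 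The min-formula covers all $\tau$: since $H(\tau)$ is affine in $\tau$, $\theta_m$ is a minimum of affine functions and hence concave. Concavity immediately gives $\theta_m(\tau)\ge(1-\tau)\theta_m(0)+\tau\theta_m(1)$, which is \eqref{A-B-eigenvalue-concavity-bottom-1}.

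For the upper bound I would not differentiate. Let $u_1,\dots,u_m$ be orthonormal eigenvectors of $A$ for $\alpha_1,\dots,\alpha_m$, and $v_1,\dots,v_m$ those of $B$ for $\beta_1,\dots,\beta_m$. Inserting them as test vectors in the min-formula gives the tangent-line bounds
$$\theta_m(\tau)\le \sum_{j\le m}\alpha_j+\tau\sum_{j\le m}\langle (B-A)u_j,u_j\rangle,\qquad \theta_m(\tau)\le \sum_{j\le m}\beta_j-(1-\tau)\sum_{j\le m}\langle (B-A)v_j,v_j\rangle .$$
These play the role of $\dot\theta_m(0)$ and $\dot\theta_m(1)$ in \eqref{theta-m-concavity}. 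Adding them with weights $1-\tau$ and $\tau$ yields
$$\theta_m(\tau)\le(1-\tau)\theta_m(0)+\tau\theta_m(1)+\tau(1-\tau)\Big(\sum_{j\le m}\langle Bu_j,u_j\rangle-\sum_{j\le m}\alpha_j-\sum_{j\le m}\beta_j+\sum_{j\le m}\langle Av_j,v_j\rangle\Big).$$

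The remaining step is to bound the two expectation sums. By Ky Fan's maximum principle (the max-analogue of the min-formula), for any orthonormal $m$-frame
$$\sum_{j\le m}\langle Bu_j,u_j\rangle\le\sum_{j\le m}\beta_{n+1-j},\qquad \sum_{j\le m}\langle Av_j,v_j\rangle\le\sum_{j\le m}\alpha_{n+1-j}.$$
Substituting and collecting terms, the coefficient of $\sum\alpha_j$ becomes $(1-\tau)-\tau(1-\tau)=(1-\tau)^2$ and that of $\sum\beta_j$ becomes $\tau^2$, which is exactly \eqref{A-B-eigenvalue-concavity-bottom-2}. Finally, \eqref{A+B-eigenvalue-bounds} follows by applying both inequalities to the pair $2A,2B$ at $\tau=1/2$.

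The only delicate point is justifying the slope estimates at degenerate points or crossings, where $\dot\theta_m$ may fail to exist. The test-vector (supergradient) argument above is chosen precisely to sidestep this, so I expect no real obstacle beyond bookkeeping.
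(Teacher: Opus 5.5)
Your proof is correct. Its skeleton matches the paper's: concavity of $\theta_m$ gives the lower bound. The upper bound comes from the two endpoint tangent lines, weighted by $1-\tau$ and $\tau$, after which Ky Fan's maximum principle bounds $\sum_{j\le m}\langle Bu_j,u_j\rangle$ and $\sum_{j\le m}\langle Av_j,v_j\rangle$. That last step is the paper's ``variational principle'' step, with identical algebra.

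Where you differ is in how the two ingredients are obtained. The paper gets concavity from the second-order Feynman--Hellmann sum rule \eqref{Feynman-Hellmann-sum-rule-constant} with $\ddot H=0$. It gets the tangent lines from \eqref{theta-m-concavity}, computing $\dot\theta_m(0)$ and $\dot\theta_m(1)$ by the first-order Feynman--Hellmann theorem. You get concavity from the Ky Fan minimum principle, since $\theta_m$ is a minimum of affine functions of $\tau$. You get the tangent lines by inserting the bottom eigenframes of $A$ and $B$ as test frames, i.e.\ as supergradients, with no differentiation.

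The paper's version serves its purpose of illustrating the new sum rule, but it rests on two tacit assumptions. It treats $\theta_m$ as differentiable at $\tau=0,1$, which is not automatic when $\alpha_m=\alpha_{m+1}$ or $\beta_m=\beta_{m+1}$. It also assumes concavity persists across points where $\lambda_m=\lambda_{m+1}$, which the sum rule by itself only delivers on gap intervals. Your version needs no gap, analyticity or relabeling assumptions. It also makes visible that the upper bound \eqref{A-B-eigenvalue-concavity-bottom-2} does not use concavity at all: it follows directly from the min and max principles together with $\tau\in[0,1]$.
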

\begin{proof}
	The first two inequalities of the proposition follow from the concavity inequalities for $\theta_m$ given in eq. \eqref{theta-m-concavity}. We only have to exploit the expression $\dot{\theta}_m(1)-\dot{\theta}_m(0)$. We get
	\begin{equation*}
		\begin{split}
			\dot{\theta}_m(1)-\dot{\theta}_m(0)&=\sum_{j=1}^m\dot{\lambda}_j(1)-\dot{\lambda}_j(0)\\
			&=\sum_{j=1}^m\langle (B-A)u_j(1),u_j(1)\rangle-\langle (B-A)u_j(0),u_j(0)\rangle\\
			&=\sum_{j=1}^m\alpha_j+\beta_j-\langle A u_j(1),u_j(1)\rangle-\langle B u_j(0),u_j(0)\rangle\\
			&\geq\sum_{j=1}^m\alpha_j+\beta_j-\alpha_{n+1-j}-\beta_{n+1-j}\\
		\end{split}
	\end{equation*}
	where the last line follows from the variational principle. Evaluating the bounds at $\tau=\frac{1}{2}$ we obtain eigenvalue bounds for the sum $A+B$.
\end{proof}
Applying theorem \ref{thm-2nd-order-Feynman-Hellmann-squeezing} to the bottom and the top of the spectrum of hermitian matrices yields the following result.

\begin{prop}
	Let $H=H(\tau)$ be an analytic one parameter family of $n\times n$ hermitian matrices satisfying with eigenvalues $\lambda_j$, $j=1,\ldots n$ and  let $J=\{\lambda_1,\ldots \lambda_m\}$.  Suppose that $\lambda_{m+1}-\lambda_m>0$. Then
	\begin{equation}\label{Feynman-Hellmann-sum-rule-constant-squeezing-matrices-2-sided}
		(\lambda_{n}-\lambda_1)^{-2}\sum_{j=1}^m\langle [\dot{H},[H,\dot{H}]] u_j,u_j\rangle\leq \sum_{j=1}^m \langle \ddot{H}u_j,u_j\rangle	-\ddot{\lambda}_j  
		\leq (\lambda_{m+1}-\lambda_m)^{-2}\sum_{j=1}^m\langle [\dot{H},[H,\dot{H}]] u_j,u_j\rangle
	\end{equation}
	
\end{prop}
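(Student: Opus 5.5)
The plan is to start from the identity \eqref{Feynman-Hellmann-sum-rule-constant} with $J=\{\lambda_1,\ldots,\lambda_m\}$, which in this finite-dimensional setting reads
\begin{equation*}
\sum_{j=1}^m \langle \ddot{H}u_j,u_j\rangle-\ddot{\lambda}_j=2\sum_{j=1}^m\sum_{k=m+1}^n\frac{\big|\langle \dot{H}u_j,u_k\rangle\big|^2}{\lambda_k-\lambda_j},
\end{equation*}
where all denominators are positive because $\lambda_{m+1}>\lambda_m$. We then compare the weights $(\lambda_k-\lambda_j)^{-1}$ with $(\lambda_k-\lambda_j)^{+1}$ multiplied by suitable constants. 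For $j\le m<k$ we have
\begin{equation*}
0<\lambda_{m+1}-\lambda_m\le\lambda_k-\lambda_j\le\lambda_n-\lambda_1 .
\end{equation*}
This gives the two-sided estimate
\begin{equation*}
(\lambda_n-\lambda_1)^{-2}(\lambda_k-\lambda_j)\le(\lambda_k-\lambda_j)^{-1}\le(\lambda_{m+1}-\lambda_m)^{-2}(\lambda_k-\lambda_j).
\end{equation*}
Inserting this termwise bounds the right-hand side above and below by constant multiples of
\begin{equation*}
S:=2\sum_{j=1}^m\sum_{k=m+1}^n(\lambda_k-\lambda_j)\big|\langle \dot{H}u_j,u_k\rangle\big|^2 .
\end{equation*}

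The second step is to identify $S$ with $\sum_{j=1}^m\langle[\dot H,[H,\dot H]]u_j,u_j\rangle$, exactly as in the proof of Theorem \ref{thm-2nd-order-Feynman-Hellmann-squeezing}. Apply the abstract TRK sum rule \eqref{TRK-abstract} with the self-adjoint operator $G=\dot H$ (a hermitian matrix, so all commutators are defined) and sum over $j\le m$:
\begin{equation*}
\sum_{j=1}^m\langle[\dot H,[H,\dot H]]u_j,u_j\rangle=2\sum_{j=1}^m\sum_{k=1}^n(\lambda_k-\lambda_j)\big|\langle\dot Hu_j,u_k\rangle\big|^2 .
\end{equation*}
The terms with both $j,k\le m$ are antisymmetric in $(j,k)$, since $|\langle\dot Hu_j,u_k\rangle|=|\langle\dot Hu_k,u_j\rangle|$ by hermiticity, so they cancel. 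Terms with $\lambda_k=\lambda_j$ contribute zero. What remains is exactly $S$.

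Combining the two steps yields the claimed chain of inequalities. The only point needing care is the cancellation inside $J\times J$, which relies on $\dot H$ being hermitian. A second point is that the gap hypothesis guarantees no index pair with $j\le m<k$ has a vanishing denominator. If degeneracies occur inside $J$, the symmetrization argument still works, because degenerate pairs carry the factor $\lambda_k-\lambda_j=0$. I do not expect a serious obstacle: the whole proof is the comparison of weights together with the already established TRK identity.
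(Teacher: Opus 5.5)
Your proof is correct and is essentially the paper's argument. The paper proves the upper bound in Theorem \ref{thm-2nd-order-Feynman-Hellmann-squeezing} in the same way: the $f=1$ sum rule \eqref{Feynman-Hellmann-sum-rule-constant}, the weight comparison $(\lambda_k-\lambda_j)^{-1}\le d^{-2}(\lambda_k-\lambda_j)$, and the TRK rule with $G=\dot H$ summed over $J$. The matrix proposition then follows by running the same comparison at the top of the spectrum with $\lambda_k-\lambda_j\le\lambda_n-\lambda_1$, which is exactly what you do, and your explicit handling of the $J\times J$ cancellation and of the factor $2$ is cleaner than the paper's sketch.
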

We rewrite the quadratic second order Feynman-Hellmann sun rule eq. \eqref{Feynman-Hellmann-sum-rule-quadratic} applying the chain rule or product rule for operator differentiation as follows:
\begin{prop}
	Let $H=H(\tau)$ be an analytic one parameter family of $n\times n$ hermitian matrices with eigenvalues $\lambda_j$, $j=1,\ldots n$. 
	for all $z=z(\tau)$
	\begin{equation}\label{Feynman-Hellmann-sum-rule-quadratic-operator-derivative-version}
		\begin{split}
			&\frac{1}{3}\sum_{\lambda_j\in J} \frac{d^2}{d\tau^2}(z-\lambda_j)^3-\langle  \frac{d^2}{d\tau^2}(z-H)^3u_j,u_j\rangle-\langle [\dot{H},[H,\dot{H}]]u_j,u_j\rangle\\
			&=\sum_{\lambda_j\in J} (z-\lambda_j)^2(\langle \ddot{H}u_j,u_j\rangle-\ddot{\lambda}_j) -2(z-\lambda_j)(\langle \dot{H}u_j,\dot{H}u_j\rangle-\dot{\lambda}_j^2)\\
			&=2\sum_{\lambda_j\in J}\sum_{\lambda_k\notin J}\frac{(z-\lambda_j)(z-\lambda_k)}{\lambda_k-\lambda_j}\big|\langle \dot{H}u_j,u_k\rangle\big|^2.\\
		\end{split}
	\end{equation}
	
\end{prop}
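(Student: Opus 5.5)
The second equality in \eqref{Feynman-Hellmann-sum-rule-quadratic-operator-derivative-version} is exactly the quadratic second order Feynman-Hellmann sum rule \eqref{Feynman-Hellmann-sum-rule-quadratic} of Theorem \ref{thm-second order Feynman-Hellmann theorem}. For hermitian matrices all sums are finite, so nothing new is needed there. The whole task is the first equality: we must show that the left-hand expression, summed over $\lambda_j\in J$, equals $(z-\lambda_j)^2(\langle \ddot{H}u_j,u_j\rangle-\ddot{\lambda}_j)-2(z-\lambda_j)(\langle \dot{H}u_j,\dot{H}u_j\rangle-\dot{\lambda}_j^2)$. I will prove this identity termwise, i.e.\ for each fixed $j$, by direct differentiation. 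The sum over $J$ then follows.

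First compute the scalar term. Write $w=z-\lambda_j$, so $\dot w=\dot z-\dot\lambda_j$ and $\ddot w=\ddot z-\ddot\lambda_j$. Then
\[
\tfrac13\tfrac{d^2}{d\tau^2}w^3=w^2\ddot w+2w\dot w^2 .
\]

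Next compute the operator term. Write $K=z-H$, so $\dot K=\dot z-\dot H$ and $\ddot K=\ddot z-\ddot H$. The product rule gives
\[
\tfrac{d^2}{d\tau^2}K^3=\ddot K K^2+K\ddot K K+K^2\ddot K+2\big(\dot K\dot K K+\dot K K\dot K+K\dot K\dot K\big).
\]
Taking expectations in $u_j$ and using $Ku_j=wu_j$ together with self-adjointness of $K$, the $\ddot K$ terms give $3w^2\langle\ddot K u_j,u_j\rangle$. The outer terms $\dot K\dot K K$ and $K\dot K\dot K$ give $2w\|\dot K u_j\|^2$. The middle term gives $\langle K\dot K u_j,\dot K u_j\rangle$. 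To handle the middle term, write $K=w-(H-\lambda_j)$. Then
\[
\langle K\dot Ku_j,\dot Ku_j\rangle=w\|\dot Ku_j\|^2-\langle (H-\lambda_j)\dot H u_j,\dot H u_j\rangle .
\]
The last piece is expressed through the double commutator: $\langle[\dot H,[H,\dot H]]u_j,u_j\rangle=2\langle (H-\lambda_j)\dot Hu_j,\dot Hu_j\rangle$, by expanding the commutator and using $Hu_j=\lambda_ju_j$. This is precisely the abstract TRK identity \eqref{TRK-abstract} with $G=\dot H$. Collecting terms,
\[
\tfrac13\langle\tfrac{d^2}{d\tau^2}K^3u_j,u_j\rangle=w^2\langle\ddot Ku_j,u_j\rangle+2w\|\dot Ku_j\|^2-\tfrac13\langle[\dot H,[H,\dot H]]u_j,u_j\rangle .
\]

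Now subtract. In $\ddot w-\langle\ddot K u_j,u_j\rangle$ the $\ddot z$ cancels, leaving $\langle\ddot Hu_j,u_j\rangle-\ddot\lambda_j$. For the first-order terms, the Feynman-Hellmann theorem \eqref{Feynman-Hellmann theorem-1} gives $\dot\lambda_j=\langle\dot Hu_j,u_j\rangle$, hence
\[
\|\dot Ku_j\|^2=\dot z^2-2\dot z\dot\lambda_j+\|\dot Hu_j\|^2,\qquad \dot w^2-\|\dot Ku_j\|^2=\dot\lambda_j^2-\|\dot Hu_j\|^2 .
\]
This produces exactly the middle expression of the proposition.

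The main obstacle is bookkeeping of the constant in front of the commutator term. With the normalization $\tfrac13$ applied to the whole bracket, the computation gives coefficient $\tfrac13$ in front of $\langle[\dot H,[H,\dot H]]u_j,u_j\rangle$. If the coefficient is to be exactly $1$ as printed, the $\tfrac13$ must apply only to the first two terms. I would settle this by checking the identity on a $2\times2$ example such as $H=\tau\sigma_x+\sigma_z$, and adjust the constant to match the correct normalization. Everything else is routine and holds for any choice of $z(\tau)$.
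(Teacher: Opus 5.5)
Your proof is correct and is the same product-rule computation the paper has in mind: the second equality is the quadratic sum rule \eqref{Feynman-Hellmann-sum-rule-quadratic}, and the first follows termwise from expanding $\frac{d^2}{d\tau^2}(z-H)^3$, the identity $\langle[\dot H,[H,\dot H]]u_j,u_j\rangle=2\langle(H-\lambda_j)\dot Hu_j,\dot Hu_j\rangle$ (TRK with $G=\dot H$), and Feynman--Hellmann. Your closing worry is unfounded, however: the $\frac13$ multiplies the whole summed bracket (the paper writes $\sum_j a_j-b_j$ for $\sum_j(a_j-b_j)$, as in the middle line), so the identity you derived, with effective coefficient $\frac13$ on the commutator term, is exactly the printed statement, whereas the coefficient-$1$ reading is false (in your example $H=\tau\sigma_x+\sigma_z$ at $\tau=0$, $z=0$, $J=\{\lambda_1\}$ it gives $\frac13(3-2)-4=-\frac{11}{3}$ instead of $\frac13(3-2-4)=-1$, which is the value of the right-hand side), so no constant needs adjusting.
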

Another consequence of quadratic second order Feynman-Hellmann sun rule eq. \eqref{Feynman-Hellmann-sum-rule-quadratic} is the following
\begin{prop}
	Let $H=H(\tau)$ be an analytic one parameter family of $n\times n$ hermitian matrices with eigenvalues $\lambda_j$, $j=1,\ldots n$ and suppose that $\ddot{H}\leq 0$. For $z$ fixed let $J=\{\lambda_j: \lambda_j<z\}$ and suppose that no eigenvalues leave or enter $J$. Then the function
	\begin{equation}
		\tau \mapsto\bigg(\sum_{\lambda_j\in J}(z-\lambda_j)^3\bigg)^{1/3}
	\end{equation}
	is convex.
\end{prop}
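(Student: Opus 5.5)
The plan is to reduce convexity of $\Phi(\tau)^{1/3}$, where $\Phi(\tau):=\sum_{\lambda_j\in J}(z-\lambda_j)^3$, to a differential inequality. Then control $\ddot{\Phi}$ using the splitting \eqref{Feynman-Hellmann-sum-rule-general-splittig-with-J} from the proof of Theorem \ref{thm-second order Feynman-Hellmann theorem}, and finish with Cauchy--Schwarz. Write $g_j:=z-\lambda_j>0$ for $\lambda_j\in J$. Since $J$ is fixed on the interval considered, $\Phi>0$ is analytic there. A direct computation gives
\[
\frac{d^2}{d\tau^2}\Phi^{1/3}=\tfrac13\Phi^{-5/3}\Big(\Phi\ddot{\Phi}-\tfrac23\dot{\Phi}^2\Big),
\qquad \dot{\Phi}=-3\sum_{J} g_j^2\dot{\lambda}_j,\qquad \ddot{\Phi}=6\sum_J g_j\dot{\lambda}_j^2-3\sum_J g_j^2\ddot{\lambda}_j .
\]
So it suffices to show $\Phi\ddot{\Phi}\geq \tfrac23\dot{\Phi}^2$.

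The key step is an upper bound $\sum_J g_j^2\ddot{\lambda}_j\leq 0$. I would apply \eqref{Feynman-Hellmann-sum-rule-general-splittig-with-J} with $f(\lambda)=(z-\lambda)^2$ and $z$ constant. Then $\frac{f(\lambda_k)-f(\lambda_j)}{\lambda_k-\lambda_j}=-(g_j+g_k)$, and we obtain
\[
\sum_J g_j^2\ddot{\lambda}_j=\sum_J g_j^2\langle\ddot{H}u_j,u_j\rangle-\underset{\lambda_j\neq\lambda_k}{\sum_{J}\sum_{J}}(g_j+g_k)\big|\langle u_k,\dot{H}u_j\rangle\big|^2-\sum_{\lambda_j\in J}\sum_{\lambda_k\notin J}\frac{2g_j^2}{\lambda_k-\lambda_j}\big|\langle u_k,\dot{H}u_j\rangle\big|^2 .
\]
Each term on the right is nonpositive. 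The first is nonpositive because $\ddot{H}\leq 0$. The second is nonpositive because $g_j,g_k>0$ on $J$. The third is nonpositive because $\lambda_k\geq z>\lambda_j$ for $\lambda_k\notin J$.

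Hence $\ddot{\Phi}\geq 6\sum_J g_j\dot{\lambda}_j^2$. By Cauchy--Schwarz,
\[
\Big(\sum_J g_j^2\dot{\lambda}_j\Big)^2=\Big(\sum_J g_j^{1/2}\dot{\lambda}_j\, g_j^{3/2}\Big)^2\leq \sum_J g_j\dot{\lambda}_j^2\sum_J g_j^3 .
\]
Combining the two bounds gives $\Phi\ddot{\Phi}\geq 6\Phi\sum_J g_j\dot{\lambda}_j^2\geq 6\big(\sum_J g_j^2\dot{\lambda}_j\big)^2=\tfrac23\dot{\Phi}^2$, which proves convexity.

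The main obstacle is choosing the right use of the sum rule. The quadratic identity \eqref{Feynman-Hellmann-sum-rule-quadratic}, or inequality \eqref{Feynman-Hellmann-sum-rule-general-ineq-f-prime-concave}, bounds $\sum_J g_j^2\ddot{\lambda}_j$ in the wrong direction. The raw splitting is what exhibits the sign of every term. One must also handle degenerate eigenvalues within $J$ and the relabelling caveat, via the Remark on eigenvalues entering or leaving $J$. This works on each interval where $J$ is constant, and by hypothesis that is the whole range.
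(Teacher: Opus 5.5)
Your proof is correct and is essentially the paper's argument. The paper only says the result follows from \eqref{Feynman-Hellmann-sum-rule-quadratic}, which for $f=(z-\lambda)^2$ is the same identity as your splitting \eqref{Feynman-Hellmann-sum-rule-general-splittig-with-J}; and your sign analysis plus Cauchy--Schwarz is exactly the proof of part (1) of Corollary~\ref{cor-trace-convexity-inequalities-3-bottom-of-spectrum} with $F(\lambda)=(z-\lambda)^3$, for which the function of \eqref{def-A-function-trace-convexity} is $A(y)=\tfrac32 y$.

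Your remark that \eqref{Feynman-Hellmann-sum-rule-quadratic} bounds things in the wrong direction holds only if one discards its nonpositive right-hand side. Write $g_k=z-\lambda_k\le 0$ for $\lambda_k\notin J$. Expanding $\langle\dot H u_j,\dot H u_j\rangle-\dot\lambda_j^2=\sum_{k\neq j}|\langle\dot H u_j,u_k\rangle|^2$ and using $g_j+\frac{g_jg_k}{\lambda_k-\lambda_j}=\frac{g_j^2}{\lambda_k-\lambda_j}$ turns the identity back into your sign-definite decomposition.
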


\section{Sum rules and Feynman-Hellmann theorems for Zeros of Bessel functions}
We consider the Dirichlet eigenvalue problem
\begin{equation}\label{eq-Bessel-eigenvalue-eq}
	-u''(x)+\frac{\nu^2-1/4}{x^2}u(x)=Eu(x), \quad u(0)=0, u(1)=0
\end{equation}
Its eigenvalues are given by $E_{k}=E_{k}(\nu)=j^2_{\nu,k}$ where $j_{\nu,k}$ denotes the $k-$th zero of the Bessel function $J_{\nu}(x)$. The corresponding normalized eigenfunctions are given by
\begin{equation}\label{eq-Bessel-eigenfunction}
	u_k(x)=\frac{2^{1/2}}{|J_{\nu+1}(j_{\nu,k})|}\,\big(j_{\nu,k}x\big)^{1/2}J_{\nu}(j_{\nu,k}x)
\end{equation}
though we shall not make explicit use of these expressions. In the following we summarize known results about the zeros of $J_{\nu}(x)$: For each $k$, $j^2_{\nu,k}/\nu^2$ is decreasing in $\nu\in ]0,\infty[$ and $j_{\nu,k}/\nu$ is decreasing in $\nu\in ]0,\infty[$ and $j^2_{\nu,k}/\nu$ is increasing in $\nu\in ]\nu_0,\infty[$ when $\nu_0$ is sufficiently large. These properties can be proven either applying the classical (first order) Feynman-Hellmann theorem eq.\eqref{Feynman-Hellmann theorem-1} or integral representations of Bessel functions, see \cite{IsmZha1988}, theorem 3.1, \cite{LewMul1977}, theorem 3.1. In \cite{LewMul1977}, theorem 3.1, it is also shown that $\dot{E}_k=2j_{\nu,k}\dot{j}_{\nu,k}$ increases for $\nu\in ]\nu_k,\infty[$ for some $\nu_k$  sufficiently large using an integral representation of $J_{\nu}(x)$. Here we should also mention the elementary inequality
\begin{equation}
\dot{E}_k\geq 2\nu
\end{equation} 
all positive integers $k$ which follows from the first-order Feynman-Hellman theorem and the fact that $0\leq x\leq 1$. As a consequence for all $0\leq \nu_1\leq \nu_2$ and all positive integers $k$ we have
\begin{equation}
	E_k(\nu_2)-\nu_2^2\geq E_k(\nu_1)-\nu_1^2
\end{equation} 
and therefore letting $\nu_1$ tend to zero
\begin{equation}
	E_k(\nu_2)\geq\nu_2^2+E_k(0)\geq \nu_2^2.
\end{equation} 
These inequalities are asymptotically sharp for each $k$ since $\nu^{-2}E_k(\nu)\to 1$ as $\nu$ tends to infinity (see e.g. \cite{QuWon1999} with sharp lower and upper bounds for $j_{\nu,k}$).

A sum rule derived for the zeros of Bessel functions was first derived by Arai \cite{Arai1992} which is a particular case of the quadratic sum rule eq.\eqref{HS-sumrule-discrete} corresponding to the coefficient of $z^2$ (the TRK sum rule), $G$ a multiplication operator and $J$ consisting of a single eigenvalue of the spectrum and relates integrals of Bessel functions and its zeros.

The (full) quadratic sum rule \eqref{HS-sumrule-discrete} with the coordinate function $G(x)=x$ leads as in section 2.3 to the inequality

\begin{equation}\label{eq-HS-sum-rule-Bessel-ineq-1}
		\sum_{k}(z-E_k)_{+}^2\leq 4\sum_{k}(z-E_k)_{+}(E_k-\frac{\nu^2-1/4}{2\nu}\dot{E}_k)
\end{equation}
and we conclude that for $\nu>1/2$:
\begin{equation}\label{eq-HS-sum-rule-Bessel-ineq-2}
	\frac{d}{d\nu}(\nu^2-1/4)^{-5/2}\sum_{k}(z(\nu^2-1/4)-E_k)_{+}^2\geq 0.
\end{equation}

From theorem \ref{thm-2nd-order-Feynman-Hellmann-squeezing} we obtain for any positive integer $m$ the concavity inequality

\begin{equation}\label{eq-Feynman-Hellmann-Bessel-ineq-concavity-1}
\sum_{k=1}^m\ddot{E}_k-\nu^{-1}\dot{E}_k\leq 0.
\end{equation}
In particular $\displaystyle \sum_{k=1}^m\nu^{-1}\dot{E}_k=2\sum_{k=1}^m\nu^{-1}j_{\nu,k}\dot{j}_{\nu,k}$ is decreasing (in $\nu$). Integrating inequality \eqref{eq-Feynman-Hellmann-Bessel-ineq-concavity-1} we obtain the following proposition.
\begin{prop}
	For all $\nu_2>\nu>\nu_1>0$ and any positive integer $m$:
\begin{equation}\label{eq-Feynman-Hellmann-Bessel-ineq-concavity-2}
	\sum_{k=1}^mE_k(\nu_2)-E_k(\nu)\leq \frac{\nu_2^2-\nu^2}{\nu^2-\nu_1^2}\sum_{k=1}^mE_k(\nu)-E_k(\nu_1).
\end{equation}
	
\end{prop}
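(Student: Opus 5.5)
We derive the inequality by integrating the concavity inequality \eqref{eq-Feynman-Hellmann-Bessel-ineq-concavity-1} twice. Set $S(\nu):=\sum_{k=1}^m E_k(\nu)$; the $E_k$ are simple and analytic in $\nu>0$, so $S$ is smooth. Inequality \eqref{eq-Feynman-Hellmann-Bessel-ineq-concavity-1} reads $\ddot S-\nu^{-1}\dot S\le 0$, i.e.
\begin{equation*}
\frac{d}{d\nu}\Big(\frac{\dot S(\nu)}{\nu}\Big)=\frac{\ddot S-\nu^{-1}\dot S}{\nu}\le 0 .
\end{equation*}
So $g(\nu):=\nu^{-1}\dot S(\nu)$ is non-increasing on $]0,\infty[$; this is the monotonicity already noted after \eqref{eq-Feynman-Hellmann-Bessel-ineq-concavity-1}.

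Next we change variables to $s=\nu^2$. Then $\frac{dS}{ds}=\frac{\dot S}{2\nu}=\frac12 g(\sqrt s)$, which is non-increasing in $s$. Hence $s\mapsto S(\sqrt s)$ is a concave function of $s=\nu^2$ on $]0,\infty[$.

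For a concave function $\Phi$ and $s_1<s<s_2$, difference quotients over consecutive intervals are non-increasing:
\begin{equation*}
\frac{\Phi(s_2)-\Phi(s)}{s_2-s}\le\frac{\Phi(s)-\Phi(s_1)}{s-s_1}.
\end{equation*}
We apply this with $\Phi(s)=S(\sqrt s)$, $s_i=\nu_i^2$ and $s=\nu^2$. Multiplying by $s_2-s=\nu_2^2-\nu^2>0$ gives exactly \eqref{eq-Feynman-Hellmann-Bessel-ineq-concavity-2}.

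Without the change of variables, the same conclusion follows by writing each difference as an integral of $\dot S=\mu g(\mu)$:
\begin{equation*}
S(\nu_2)-S(\nu)=\int_\nu^{\nu_2}\mu g(\mu)\,d\mu\le g(\nu)\,\frac{\nu_2^2-\nu^2}{2},\qquad
S(\nu)-S(\nu_1)\ge g(\nu)\,\frac{\nu^2-\nu_1^2}{2},
\end{equation*}
both using that $g$ is non-increasing. Dividing the first by the second (the second is positive because $\dot E_k\ge 2\nu>0$) yields the claim.

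The only step needing care is the legitimacy of \eqref{eq-Feynman-Hellmann-Bessel-ineq-concavity-1} on all of $]0,\infty[$. It relies on Theorem \ref{thm-2nd-order-Feynman-Hellmann-squeezing} applied with $\dot H=2\nu x^{-2}$ and $\ddot H=2x^{-2}=\nu^{-1}\dot H$, so that $\sum_{k\le m}\langle\ddot H u_k,u_k\rangle=\nu^{-1}\dot S$. The sign of the first inequality there gives $\ddot S\le\nu^{-1}\dot S$. Since the Dirichlet Bessel eigenvalues are simple, no relabeling issues arise. For small $\nu$ the operators $x^{-2}$ act on the eigenfunctions, which behave like $x^{\nu+1/2}$, so the domain hypotheses need checking there. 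In any case we work on the open interval with $\nu_1>0$.
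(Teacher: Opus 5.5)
Your argument is correct and follows the paper's route. The paper obtains the proposition simply by integrating the concavity inequality \eqref{eq-Feynman-Hellmann-Bessel-ineq-concavity-1}, and your steps spell that out: $\nu^{-1}\sum_{k\le m}\dot E_k$ is non-increasing, i.e.\ $\nu^2\mapsto\sum_{k\le m}E_k$ is concave, and the three-slope inequality then gives the claim.

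The small-$\nu$ domain caveat you raise concerns the derivation of \eqref{eq-Feynman-Hellmann-Bessel-ineq-concavity-1} itself, which the paper also takes for granted. It can be sidestepped: by Ky Fan's principle, $\sum_{k\le m}E_k$ is an infimum over orthonormal $m$-tuples in $H^1_0(0,1)$ of sums of a quadratic form that is affine in $\nu^2$ (Hardy's inequality makes this form closed on $H^1_0(0,1)$ for every $\nu>0$). Hence it is concave in $\nu^2$ for $\nu>0$.
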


\begin{rem}
	Since for any $k$ fixed $\lim_{\nu\to\infty}\nu^{-1}j_{\nu,k}=1$ (see e.g. \cite{QuWon1999} even with sharp upper and lower bounds), i.e. $\lim_{\nu\to\infty}\nu^{-2}E_{\nu,k}=1$, we get from the proposition that for any positive integer $m$ and all $\nu>\nu_1>0$
	\begin{equation}\label{eq-Feynman-Hellmann-Bessel-ineq-concavity-3}
		m(\nu^2-\nu_1^2)\leq \sum_{k=1}^mE_k(\nu)-E_k(\nu_1).
	\end{equation}
Alternatively we may rewrite inequality \eqref{eq-Feynman-Hellmann-Bessel-ineq-concavity-1} as follows:

\begin{equation}\label{eq-Feynman-Hellmann-Bessel-ineq-concavity-4}
	\sum_{k=1}^m\frac{d^2}{d\nu^2}\frac{E_k}{\nu^2}+3\nu^{-1}\frac{d}{d\nu}\frac{E_k}{\nu^2}\leq 0.
\end{equation}
Since $\frac{E_k}{\nu^2}$ is decreasing in $\nu$, even strictly decreasing, we obtain the inequality
\begin{equation}
	\frac{\sum_{k=1}^m\nu^{-2}E_k(\nu)-\nu_2^{-2}E_k(\nu_2)}{\sum_{k=1}^m\nu_1^{-2}E_k(\nu_1)-\nu^{-2}E_k(\nu)}\geq 
	\frac{\nu^{-2}-\nu_2^{-2}}{\nu_1^{-2}-\nu^{-2}}
\end{equation}
for any positive integer $m$ and all $\nu_2>\nu>\nu_1>0$.
\end{rem}
As shown in \cite{HarStu2010} using integral transforms of $(z-E)_{+}^2$ the conclusions of theorem \ref{thm-Feynman-Hellmann theorem in quadratic sum rules} can ve easily extended to a large class of functions as, for example the partition function. 
\begin{prop}
	For $\nu>\frac{1}{2}$ and all $t>0$ the function
	\begin{equation}\label{eq-partition-function-square-of-Bessel-zeros}
		H(t,\nu):= (\nu^2-\frac{1}{4})^{-\frac{1}{2}}\sum_ke^{-t(\nu^2-\frac{1}{4})^{-\frac{1}{2}}E_k}
	\end{equation}
	is increasing in $\nu$. Consequently, by Laplace transforming eq. \eqref{eq-partition-function-square-of-Bessel-zeros}  we also have for all $p>\frac{1}{2}$:
	\begin{equation} \label{eq-negative-moments-square-of-Bessel-zeros}
			\frac{d}{d\nu}(\nu^2-1/4)^{p-1/2}\sum_{k}E_k^{-p}\geq 0.
	\end{equation}
\end{prop}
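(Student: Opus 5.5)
The plan is to derive the monotonicity of the heat-trace quantity from the monotonicity of quadratic Riesz means in \eqref{eq-HS-sum-rule-Bessel-ineq-2}, via an integral transform. Set $s=s(\nu):=\nu^2-\frac14>0$ for $\nu>\frac12$. Then \eqref{eq-HS-sum-rule-Bessel-ineq-2} says that for each fixed $z$ the function
\[
R(z,\nu):=s^{-5/2}\sum_k (zs-E_k)_{+}^2
\]
is non-decreasing in $\nu$. (This itself comes from \eqref{eq-HS-sum-rule-Bessel-ineq-1} by Theorem \ref{thm-Feynman-Hellmann theorem in quadratic sum rules} with $\eta=4$ and $\theta=-2s/\nu$.) The key identity is the Laplace transform of the Riesz mean in $z$. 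For $w>0$,
\[
\int_0^\infty e^{-wz}(zs-E)_{+}^2\,dz=\frac{2s^2}{w^3}\,e^{-wE/s}.
\]
To check it, substitute $y=zs-E$, which gives $s^{-1}e^{-wE/s}\int_0^\infty e^{-wy/s}y^2\,dy=2s^2w^{-3}e^{-wE/s}$.

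Summing over $k$ (Tonelli applies since all terms are nonnegative, and the sums are finite because $E_k=j_{\nu,k}^2\sim \pi^2k^2$), I obtain
\[
\int_0^\infty e^{-wz}R(z,\nu)\,dz=\frac{2}{w^3}\,s^{-1/2}\sum_k e^{-wE_k/s}.
\]
The left side is non-decreasing in $\nu$ for each fixed $w$, because the kernel is positive and $R$ is monotone pointwise in $z$. Hence $s^{-1/2}\sum_k e^{-wE_k/s}$ is non-decreasing in $\nu$ for every $w>0$. The exponent in \eqref{eq-partition-function-square-of-Bessel-zeros} is $t\,s^{-1/2}E_k$, not $tE_k/s$, so I have to reconcile the normalization. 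I would reread the scaling of \eqref{eq-HS-sum-rule-Bessel-ineq-1}. With $\eta=4$, $\theta=-2s/\nu$, the ODEs give $A\propto s^{5/4}$ and $B\propto s$, so $A^{-2}\sum(zB-E_k)_+^2=s^{-5/2}\sum(zs-E_k)_+^2$, and the transform forces the exponent $E_k/s$ with prefactor $s^{-1/2}$.

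The main obstacle is therefore matching the paper's stated normalization. I expect the intended statement is $s^{-1/2}\sum_k e^{-tE_k/s}$, and I would prove that version, noting that any other admissible pair $(A,B)$ from \eqref{eq-energy-bounds-on-commutators-ODEs} yields the correspondingly rescaled statement by the same transform. Since $t=w$ is arbitrary, this covers all $t>0$.

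For \eqref{eq-negative-moments-square-of-Bessel-zeros}, I would use the Mellin transform in $t$:
\[
\int_0^\infty t^{p-1}e^{-tE_k/s}\,dt=\Gamma(p)\,s^pE_k^{-p}.
\]
Multiplying the monotone function $s^{-1/2}\sum_k e^{-tE_k/s}$ by $t^{p-1}/\Gamma(p)\ge0$ and integrating gives $s^{p-1/2}\sum_kE_k^{-p}$, which is non-decreasing in $\nu$. Convergence requires $\sum_kE_k^{-p}<\infty$, which holds exactly when $2p>1$, i.e. $p>\frac12$, by $E_k\sim\pi^2k^2$. It also requires integrability near $t=0$, where $\sum_k e^{-tE_k/s}\sim c\,t^{-1/2}$ by Weyl's law, and this is integrable against $t^{p-1}$ precisely for $p>\frac12$.

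Monotonicity passes through the integral because it holds pointwise in $t$ for each pair $\nu_1<\nu_2$. I would therefore compare integrals at two values of $\nu$ rather than differentiate under the integral sign. The derivative statement then follows wherever the function is differentiable, which is everywhere since each $E_k$ is analytic in $\nu$ and the series converges locally uniformly.
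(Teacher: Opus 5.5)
This is the paper's route. The paper only remarks that, as in \cite{HarStu2010}, one integrates the Riesz-mean monotonicity \eqref{eq-HS-sum-rule-Bessel-ineq-2} against $e^{-tz}$ and then transforms once more in $t$ to obtain \eqref{eq-negative-moments-square-of-Bessel-zeros}; your execution of both steps is correct.

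Your normalization correction is also right. The transform yields $(\nu^2-\frac14)^{-1/2}\sum_k e^{-tE_k/(\nu^2-\frac14)}$, which is exactly the version whose Mellin transform gives \eqref{eq-negative-moments-square-of-Bessel-zeros}. The printed $H(t,\nu)$ cannot be increasing: from $\dot{E}_k\geq 2\nu$ one gets $E_k\geq \nu^2-\frac14+\pi^2k^2$, hence $H(t,\nu)\leq (4\pi t)^{-1/2}(\nu^2-\frac14)^{-1/4}e^{-t(\nu^2-\frac14)^{1/2}}\to 0$ as $\nu\to\infty$.
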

\begin{rem}
	A similar statement about the monotonicity of $H(t,\nu)$ holds for $0\leq \nu<\frac{1}{2}$.  
\end{rem}
\begin{rem}
For $p=1,2,3$ the series are explicitly known, e.q. $\displaystyle  \sum_{k}E_k^{-1} =\frac{1}{4(\nu+1)}$ (\cite{Cal1977}).
In these particular cases the derivative in eq. \eqref{eq-negative-moments-square-of-Bessel-zeros} are strictly positive.
\end{rem}
Applying the abstract second order Feynman-Hellmann sumrule eq. \ref{Feynman-Hellmann-sum-rule-quadratic} and using
$\dot{H}=2\nu x^{-2}$, $\ddot{H}=\nu^{-1}\dot{H}$ we get for all positive integers $n$:
\begin{equation}\label{eq-Bessel-ev-Feynman-Hellmann-sum-rule-quadratic}
	\begin{split}
		&\sum_{k=1}^n (z-E_k)^2(\nu^{-1}\dot{E}_k-\ddot{E}_k) -2(z-\lambda_j)(\langle \dot{H}u_k,\dot{H}u_k\rangle-\dot{E}_k^2)\\
		&=2\sum_{k=1}^n\sum_{l=n+1}^{\infty}\frac{(z-E_k)(z-E_l)}{E_l-E_k}\big|\langle \dot{H}u_k,u_l\rangle\big|^2.\\
	\end{split}
\end{equation}
By the following lemma we see that the expression $\langle \dot{H}u_k,\dot{H}u_k\rangle$ is well-defined only if $\nu>1$ and can be expressed in terms of $E_k$ and $\dot{E}_k$:
\begin{lem}
	Let $\nu>1$. For all positive integers $k$:
	\begin{equation}\label{Bessel-H-dot-square-expectation}
	\langle \dot{H}u_k,\dot{H}u_k\rangle=\frac{2\nu^2E_k}{\nu^2-1}	\big(1+\frac{\dot{E}_k}{2\nu}\big).
	\end{equation}
\end{lem}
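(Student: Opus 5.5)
The plan is to reduce the claim to a computation of $\int_0^1 x^{-4}u_k^2\,dx$ by virial-type identities for the ODE \eqref{eq-Bessel-eigenvalue-eq}, without using explicit Bessel formulas. Write $u=u_k$, $E=E_k$, $c=\nu^2-\tfrac14$, and
\[
S=\int_0^1 \frac{u^2}{x^2}\,dx,\qquad A=\int_0^1\frac{u'^2}{x^2}\,dx,\qquad B=\int_0^1\frac{u^2}{x^4}\,dx .
\]
Since $\dot H=2\nu x^{-2}$, the first-order Feynman-Hellmann theorem gives $\dot E=2\nu S$, so $S=\dot E/(2\nu)$. Moreover $\langle \dot Hu,\dot Hu\rangle=4\nu^2 B$. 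The statement \eqref{Bessel-H-dot-square-expectation} is therefore equivalent to
\[
B=\frac{E(1+S)}{2(\nu^2-1)} .
\]

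Near $0$ we have $u\sim C x^{\nu+1/2}$, so $u^2/x^4\sim x^{2\nu-3}$ is integrable exactly when $\nu>1$. This is where the hypothesis enters. The same asymptotics make every boundary term at $x=0$ below vanish, while $u(1)=0$ removes most terms at $x=1$.

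I would derive three identities by multiplying $-u''+c x^{-2}u=Eu$ by suitable test functions and integrating by parts.

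First, multiplying by $xu'$ (the Rellich/Pohozaev multiplier) and combining with the energy identity $\int u'^2+c\int u^2/x^2=E$ gives the boundary value
\[
u'(1)^2=2E .
\]

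Second, multiplying by $x^{-2}u$ gives
\[
A-3B+cB=ES ,
\]
using $\int -u''u\,x^{-2}=A-\int (u^2)'x^{-3}=A-3B$.

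Third, multiplying by $x^{-1}u'$ gives
\[
-\tfrac12 u'(1)^2-\tfrac12 A+\tfrac32 cB=\tfrac12 ES ,
\]
that is, after inserting $u'(1)^2=2E$,
\[
-2E-A+3cB=ES .
\]

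Adding the last two identities eliminates $A$:
\[
(4c-3)B=2E(1+S).
\]
Since $4c-3=4(\nu^2-1)$, this is exactly the required formula for $B$. Multiplying by $4\nu^2$ and substituting $S=\dot E/(2\nu)$ then yields \eqref{Bessel-H-dot-square-expectation}.

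The main obstacle is bookkeeping of the boundary terms. At $x=0$ each product, such as $u'^2/x$ or $u^2/x^3$, must be checked to vanish for $\nu>1$ using the behaviour $x^{\nu+1/2}$. At $x=1$ the only surviving term is $u'(1)^2$, and it must be eliminated through the Rellich identity; without that step one equation is missing and $A$ cannot be removed.
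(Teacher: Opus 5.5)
Your proposal is correct and follows the paper's route: you multiply the equation by $x^{-2}u_k$ and by $x^{-1}u_k'$, eliminate $\int_0^1 x^{-2}u_k'^2\,dx$, and close with the boundary identity $u_k'(1)^2=2E_k$ obtained from the multiplier $xu_k'$. Your three identities and the resulting relation $4(\nu^2-1)\int_0^1 x^{-4}u_k^2\,dx=2E_k\bigl(1+\dot E_k/(2\nu)\bigr)$ are correct. The boundary terms at $0$ vanish precisely because $\nu>1$.
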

\begin{proof}
	The proof is elementary. We multiply the Schr\"{o}dinger equation $H u_k=E_ku_k$ successively by $x^{-2}u_k$ and $x^{-1}u'_k$, integrate the resulting equation which requires $\nu>1$ in order to get finite terms and isolate the expression $\displaystyle \int_{0}^{1}x^{-4}u^2_k\,dx$. Finally we note that
		\begin{equation}
		u_k'(1)^2=2E_k
	\end{equation}
	which follows from multiplying the  Schr\"{o}dinger equation $H u_k=E_ku_k$ by $xu'_k$ and integrating or alternatively from the explicit solutions given in eq. \eqref{eq-Bessel-eigenfunction}.
\end{proof}
We shall also make use of a property of functions satisfying a first order linear partial differential inequality:
\begin{lem}
Let $a(\nu),b(\nu)$ be continuous with primitives $A(\nu)$ and $B(\nu)$,respectively. Suppose that $G(\nu,z)$ satisfies the following partial differential inequality
	\begin{equation}
		\frac{\partial\, G(\nu,z)}{\partial\nu}-a(\nu)G(\nu,z)+z\,b(\nu)\frac{\partial\, G(\nu,z)}{\partial z}\leq 0.
	\end{equation}
	Then for any $z$ fixed
	\begin{equation}
		\frac{d\,}{d\nu}\, e^{-A(\nu)}G(\nu,e^{B(\nu)}z)\leq 0.
	\end{equation}
\end{lem}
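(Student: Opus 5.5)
The plan is the method of characteristics, turning the first-order linear partial differential inequality into an ordinary differential inequality along the curves $z\mapsto e^{B(\nu)}z$. Fix $z$ and set
\begin{equation*}
\Phi(\nu):=e^{-A(\nu)}G(\nu,e^{B(\nu)}z).
\end{equation*}
We tacitly assume that $G$ is $C^1$ in both variables, or at least differentiable along these curves. This is the situation in the application, where $G$ is a Riesz mean $\sum_k(z-E_k)_{+}^2$ and hence $C^1$ in $z$.

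The first step is to differentiate $\Phi$ by the chain rule. Using $A'=a$ and $B'=b$ one gets
\begin{equation*}
\Phi'(\nu)=e^{-A(\nu)}\Big(\partial_\nu G-a(\nu)G+b(\nu)\,e^{B(\nu)}z\,\partial_z G\Big)\Big|_{(\nu,\,e^{B(\nu)}z)}.
\end{equation*}
Here the derivative of $e^{B(\nu)}z$ with respect to $\nu$ is $b(\nu)e^{B(\nu)}z$.

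The second step is to apply the hypothesis at the point $(\nu,w)$ with $w=e^{B(\nu)}z$. The bracket above is exactly
\begin{equation*}
\frac{\partial G}{\partial\nu}(\nu,w)-a(\nu)G(\nu,w)+w\,b(\nu)\frac{\partial G}{\partial z}(\nu,w),
\end{equation*}
which is $\le 0$ because the inequality holds for every value of the second variable. Since $e^{-A}>0$, it follows that $\Phi'\le0$, which is the claim.

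The only delicate point is regularity. If $G$ is merely continuous, or satisfies the inequality only in a weak or one-sided sense (for instance, the positive part being non-differentiable where $z=E_k$), I would instead argue with Dini derivatives along the characteristic, or approximate $G$ by smooth functions. In the intended application $G$ is $C^1$ in $z$ and piecewise analytic in $\nu$, so the direct chain-rule computation suffices away from crossing points. Continuity of $\Phi$ then gives monotonicity across those isolated points.
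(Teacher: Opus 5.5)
Your proposal is correct and is the same argument the paper intends: its proof is just ``an easy computation,'' namely the chain rule along $w=e^{B(\nu)}z$, the hypothesis applied at $(\nu,w)$, and positivity of $e^{-A}$. Your remarks on regularity go beyond what the paper says and do no harm.
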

\begin{proof}
	The proof is an easy computation.
\end{proof}
Since $\displaystyle 1\leq\frac{\dot{E}_k}{2\nu}$ we have
\begin{equation}\label{Bessel-H-dot-square-expectation-ineq}
	\langle \dot{H}u_k,\dot{H}u_k\rangle\leq\frac{2\nu\,E_k\dot{E}_k}{\nu^2-1}.
\end{equation}
Inserting this inequality into the sum rule\eqref{eq-Bessel-ev-Feynman-Hellmann-sum-rule-quadratic} when $E_n\leq z\leq E_{n+1}$ we get the following proposition:
\begin{prop}
	For $\nu>1$ let
\begin{equation}
	F(\nu, z):=\frac{1}{3}\sum_k (z-E_k)_{+}^3,\quad 	G(\nu, z):=\frac{\partial\, F(\nu,z)}{\partial\nu}=-\sum_k (z-E_k)_{+}^2\dot{E}_k.
\end{equation}
Then
\begin{equation}
\frac{\partial^2\, F(\nu,z)}{\partial\nu^2}-\big(\frac{1}{\nu}+\frac{4}{\nu^2-1}\big)\frac{\partial\, F(\nu,z)}{\partial\nu}+\frac{4\nu\,z}{\nu^2-1}\frac{\partial^2\, F(\nu,z)}{\partial z\partial\nu}\leq 0.
\end{equation}
In particular, 
\begin{equation}
	\nu^{-1} (\nu^2-1)^{-2}G(\nu,  (\nu^2-1)z)=-\nu^{-1}\sum_k \big(z-(\nu^2-1)^{-1}E_k\big)_{+}^2\dot{E}_k
\end{equation}
is decreasing in $\nu$.
\end{prop}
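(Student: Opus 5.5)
The plan is to apply the quadratic second order Feynman-Hellmann sum rule \eqref{eq-Bessel-ev-Feynman-Hellmann-sum-rule-quadratic} with $J=\{E_1,\dots,E_n\}$ and $E_n\leq z\leq E_{n+1}$, bound the one non-explicit term by \eqref{Bessel-H-dot-square-expectation-ineq}, recognise the result as a first order PDE inequality for $G=\partial F/\partial\nu$, and integrate it with the preceding lemma on linear partial differential inequalities.

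\emph{Sign of the right-hand side.} In \eqref{eq-Bessel-ev-Feynman-Hellmann-sum-rule-quadratic} each term of the right-hand side contains $(z-E_k)\geq 0$ for $k\leq n$, $(z-E_l)\leq 0$ for $l\geq n+1$, and $E_l-E_k>0$. So the right-hand side is $\leq 0$, and
\begin{equation*}
\sum_{k=1}^n (z-E_k)^2(\nu^{-1}\dot{E}_k-\ddot{E}_k)-2(z-E_k)\big(\langle \dot{H}u_k,\dot{H}u_k\rangle-\dot{E}_k^2\big)\leq 0 .
\end{equation*}
Since $z-E_k\geq 0$, I may replace $\langle \dot{H}u_k,\dot{H}u_k\rangle$ by its upper bound $2\nu E_k\dot{E}_k/(\nu^2-1)$ from \eqref{Bessel-H-dot-square-expectation-ineq}; this requires $\nu>1$ and uses $\dot{E}_k\geq 2\nu$. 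The inequality is preserved. I then write $E_k=z-(z-E_k)$ in the term $-\frac{4\nu}{\nu^2-1}(z-E_k)E_k\dot{E}_k$, which splits it into
\begin{equation*}
-\frac{4\nu z}{\nu^2-1}(z-E_k)\dot{E}_k+\frac{4\nu}{\nu^2-1}(z-E_k)^2\dot{E}_k .
\end{equation*}

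\emph{Identification with derivatives of $F$.} With $F=\frac13\sum_k(z-E_k)_+^3$ I use
\begin{equation*}
F_\nu=-\sum_k(z-E_k)_+^2\dot{E}_k,\qquad F_{\nu\nu}=\sum_k 2(z-E_k)_+\dot{E}_k^2-(z-E_k)_+^2\ddot{E}_k,\qquad F_{z\nu}=-2\sum_k(z-E_k)_+\dot{E}_k .
\end{equation*}
For $z\in[E_n,E_{n+1}]$ the sums over $k\leq n$ coincide with these positive-part sums. The inequality then becomes
\begin{equation*}
F_{\nu\nu}-\Big(\frac1\nu+\frac{4\nu}{\nu^2-1}\Big)F_\nu+\frac{2\nu z}{\nu^2-1}F_{z\nu}\leq 0 .
\end{equation*}
Because $n$ is arbitrary and all terms are continuous in $z$ (the $k$-th contributions vanish to second order at $z=E_k$), this holds for all $z$. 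My bookkeeping gives these coefficients, which differ from the displayed ones ($4/(\nu^2-1)$ and $4\nu z/(\nu^2-1)$). However, they are exactly the ones consistent with the "in particular" conclusion, so part of the work is to recheck these constants carefully against that conclusion.

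\emph{Integration.} Set $G=F_\nu$, so the inequality reads $G_\nu-a(\nu)G+z\,b(\nu)G_z\leq 0$ with
\begin{equation*}
a=\frac1\nu+\frac{4\nu}{\nu^2-1},\qquad b=\frac{2\nu}{\nu^2-1}.
\end{equation*}
Their primitives are $A=\ln\nu+2\ln(\nu^2-1)$ and $B=\ln(\nu^2-1)$. The preceding lemma then gives that
\begin{equation*}
\nu^{-1}(\nu^2-1)^{-2}\,G\big(\nu,(\nu^2-1)z\big)
\end{equation*}
is non-increasing in $\nu$. The final rewriting uses $\big((\nu^2-1)z-E_k\big)_+^2=(\nu^2-1)^2\big(z-(\nu^2-1)^{-1}E_k\big)_+^2$, which yields the stated expression.

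The main obstacle is justifying the step at eigenvalue crossings of $z$: the sum rule holds only while no $E_k$ enters or leaves $J$, as noted in the remark following Theorem \ref{thm-second order Feynman-Hellmann theorem}. I would handle this by the continuity of the positive-part sums, or equivalently by restarting the argument on each interval between crossings as in \cite{Stu2010}.
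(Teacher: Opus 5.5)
Your proof is correct and is the paper's argument spelled out: the quadratic second-order sum rule with $J=\{E_1,\dots,E_n\}$ and $E_n\le z\le E_{n+1}$, the bound $\langle \dot{H}u_k,\dot{H}u_k\rangle\le 2\nu E_k\dot{E}_k/(\nu^2-1)$, and then the lemma on linear partial differential inequalities. Your coefficients $\frac1\nu+\frac{4\nu}{\nu^2-1}$ and $\frac{2\nu z}{\nu^2-1}$ are the right ones, since they are exactly what produces the stated monotone quantity; because $F_\nu\le 0$ and $zF_{z\nu}\le 0$ (using $\dot{E}_k\ge 2\nu>0$ and $F\equiv 0$ for $z<E_1$), your inequality also implies the displayed one with the printed constants, and the crossing issue you raise is harmless because the inequality holds pointwise in $(\nu,z)$ and $G$ is $C^1$.
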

We conclude this section with a convexity result for (negative) moments of $E_k$:
\begin{prop}
	Let $p>\frac{1}{2}$. Then for all $\nu>1$:
	\begin{equation}\label{Bessel-negative-moments-second order-diff-ineq}
		\frac{d}{d\nu}\,\nu^{-1}(\nu^2-1)^{p+1}\sum_{k=1}^{\infty}\frac{dE_k^{-p}}{d\nu}\leq 0
	\end{equation}
\end{prop}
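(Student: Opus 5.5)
We derive the statement from the differential inequality of the preceding proposition for $F(\nu,z)=\frac13\sum_k(z-E_k)_+^3$, and then pass from cubic Riesz means to negative moments by an integral transform in $z$, as was done for the partition function via \cite{HarStu2010}.

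First we rewrite the partial differential inequality of the preceding proposition for $G=\partial_\nu F$:
\begin{equation*}
\partial_\nu G-\Big(\frac1\nu+\frac{4}{\nu^2-1}\Big)G+\frac{4\nu z}{\nu^2-1}\,\partial_z G\leq 0 .
\end{equation*}
This is of the form treated in the lemma on first order linear partial differential inequalities, with $a(\nu)=\frac1\nu+\frac4{\nu^2-1}$ and $b(\nu)=\frac{4\nu}{\nu^2-1}$. One checks that the lemma gives the monotonicity of $e^{-A}G(\nu,e^{B}z)$. We do not use this monotonicity at a single $z$. Instead we average the inequality against a positive weight in $z$.

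The key representation, for $E>0$ and $p>0$, is
\begin{equation*}
E^{-p}=c_p\int_0^\infty (z-E)_+^3\,\frac{dz}{z^{p+4}}\cdot\Big(\int_0^\infty (1-s)_+^3 s^{p}\,ds\Big)^{-1}.
\end{equation*}
Substituting $z=E/s$, this holds up to a constant depending only on $p$, and that constant is positive and finite for $p>0$. Hence
\begin{equation*}
\sum_k E_k^{-p}=C_p\int_0^\infty F(\nu,z)\,z^{-p-4}\,dz
\end{equation*}
and
\begin{equation*}
\sum_k\frac{dE_k^{-p}}{d\nu}=C_p\int_0^\infty G(\nu,z)\,z^{-p-4}\,dz .
\end{equation*}

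We multiply the PDE inequality by $z^{-p-4}$ and integrate over $z\in(0,\infty)$. In the $\partial_zG$ term we integrate by parts:
\begin{equation*}
\int z^{-p-3}\partial_zG\,dz=(p+3)\int z^{-p-4}G\,dz .
\end{equation*}
The boundary terms vanish: near $z=0$, $G$ is zero for $z<E_1$, and at infinity we use $G=O(z^2\sum \dot E_k)$ on the relevant range, since convergence of $\sum_k E_k^{-p}$ and of its derivative requires $p>1/2$ by Weyl asymptotics $E_k\sim \pi^2k^2$. Setting $M(\nu)=\sum_k dE_k^{-p}/d\nu$, we obtain the ordinary differential inequality
\begin{equation*}
\dot M-\Big(\frac1\nu+\frac{4}{\nu^2-1}-\frac{4\nu(p+3)}{\nu^2-1}\Big)M\leq 0 .
\end{equation*}
We compare its integrating factor with the claimed one, $\nu^{-1}(\nu^2-1)^{p+1}$, whose logarithmic derivative is $-\frac1\nu+\frac{2\nu(p+1)}{\nu^2-1}$. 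Matching the two is where the constants must be tracked carefully, and the coefficient bookkeeping might need adjusting. The exponent should be determined by the scaling $\frac{4\nu z}{\nu^2-1}$ versus the weight $z^{-p-\cdot}$. If a mismatch appears, we will instead use the transform with $(z-E)_+^2$, i.e. weight $z^{-p-3}$, applied directly to $G=-\sum(z-E_k)_+^2\dot E_k$; this changes the integration-by-parts factor to $p+2$.

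The main obstacle is twofold. The first part is the exact exponent accounting described above. The second is justifying the interchange of $\partial_\nu$, the infinite sum and the $z$-integral, together with the vanishing of the boundary terms. The latter needs uniform control of $\dot E_k$ (from $2\nu\le\dot E_k$ and the Feynman--Hellmann bound $\dot E_k\le 2\nu\langle x^{-2}u_k,u_k\rangle$) and the restriction $p>\frac12$ for convergence. We also need to note that the differential inequality of the preceding proposition holds for every $z$, not only for $z\in[E_n,E_{n+1}]$, because each real $z$ lies in such an interval.
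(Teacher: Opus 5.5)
\textbf{There is a genuine gap in the exponent bookkeeping, though your route works once it is repaired.} Superposing the cubic Riesz-mean inequality against $z^{-p-4}\,dz$ is a sound strategy. As written, however, the argument does not close, and the step you set aside as ``bookkeeping'' is exactly where it fails. You take $a=\frac1\nu+\frac{4}{\nu^2-1}$ and $b=\frac{4\nu}{\nu^2-1}$ from the display of the preceding proposition, and those coefficients are misprinted. With them, your ODE has integrating factor $\nu^{-1}\bigl(\frac{\nu+1}{\nu-1}\bigr)^2(\nu^2-1)^{2(p+3)}$, not $\nu^{-1}(\nu^2-1)^{p+1}$. Likewise, your ``one checks'' claim would give monotonicity of $\nu^{-1}\bigl(\frac{\nu+1}{\nu-1}\bigr)^2G(\nu,(\nu^2-1)^2z)$, which contradicts that proposition's own ``in particular'' clause. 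Your fallback does not help either. The integral $\int_0^\infty G\,z^{-p-3}\,dz$ is a positive multiple of $-\sum_kE_k^{-p}\dot E_k$, the derivative of the $(p-1)$-st moment, not of $\sum_k dE_k^{-p}/d\nu=-p\sum_kE_k^{-p-1}\dot E_k$. It is the same transform with $p$ replaced by $p-1$.

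\textbf{The fix.} Re-derive the PDE yourself. For $E_n\le z\le E_{n+1}$ the right side of \eqref{eq-Bessel-ev-Feynman-Hellmann-sum-rule-quadratic} is $\le 0$, which reads $\partial_\nu G-\nu^{-1}G-2\sum_k(z-E_k)_+\langle\dot Hu_k,\dot Hu_k\rangle\le0$. Inserting \eqref{Bessel-H-dot-square-expectation-ineq} and $\sum_k(z-E_k)_+E_k\dot E_k=G-\frac z2\,\partial_zG$ gives
\[
\partial_\nu G-\Bigl(\frac1\nu+\frac{4\nu}{\nu^2-1}\Bigr)G+\frac{2\nu z}{\nu^2-1}\,\partial_zG\le 0 .
\]
So the correct coefficients are $a=\frac1\nu+\frac{4\nu}{\nu^2-1}$ and $b=\frac{2\nu}{\nu^2-1}$. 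These are consistent with $e^{-A}=\nu^{-1}(\nu^2-1)^{-2}$ and $e^{B}=\nu^2-1$ in the ``in particular'' clause. Your integration by parts then yields
\[
\dot M-\Bigl(\frac1\nu+\frac{4\nu}{\nu^2-1}-\frac{2\nu(p+3)}{\nu^2-1}\Bigr)M=\dot M-\Bigl(\frac1\nu-\frac{2(p+1)\nu}{\nu^2-1}\Bigr)M\le0,
\]
which has exactly the claimed integrating factor.

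\textbf{Comparison with the paper.} With this correction your route is valid but differs from the paper's. The paper applies the $f'$-concave inequality of Theorem \ref{thm-second order Feynman-Hellmann theorem} with $f(E)=E^{-1-p}$ on the full spectrum, together with \eqref{Bessel-H-dot-square-expectation} and $\dot E_k\ge 2\nu$. The two routes are in fact equivalent. Integrate the pointwise inequality (before inserting the bound on $\langle\dot Hu_k,\dot Hu_k\rangle$) against $z^{-p-4}$, using $\int_0^\infty(z-E)_+^2z^{-p-4}dz=\frac{2E^{-p-1}}{(p+1)(p+2)(p+3)}$ and $\int_0^\infty(z-E)_+z^{-p-4}dz=\frac{E^{-p-2}}{(p+2)(p+3)}$. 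This reproduces the paper's first displayed inequality term by term. The paper's route is shorter and avoids interchanging $\partial_\nu$, $\sum_k$ and $\int dz$. Yours shows the moment inequality as a Mellin-type transform of the $z$-resolved Riesz-mean inequality.

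\textbf{Two minor corrections.} First, the normalizing constant is $\int_0^1(1-s)^3s^{p-1}ds$. Second, at infinity the sum in $G$ runs only over $E_k<z$. You have $\dot E_k=O(E_k)$, from $\dot E_k^2\le\langle\dot Hu_k,\dot Hu_k\rangle$ and \eqref{Bessel-H-dot-square-expectation}, and $\#\{E_k<z\}=O(\sqrt z)$. Together these give $G=O(z^{7/2})$, so $z^{-p-3}G\to0$ exactly when $p>\frac12$.
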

\begin{proof}
Applying theorem \ref{thm-second order Feynman-Hellmann theorem} to the function $f(E)=E^{-1-p}$ and the full spectrum and using the identity \eqref{Bessel-H-dot-square-expectation} we have the following inequality:
\begin{equation}
	\sum_{k=1}^{\infty}E_k^{-p-1}(\ddot{E}_k-\nu^{-1}\dot{E}_k)+(p+1)E_k^{-p-2}\big(\frac{2\nu^2E_k}{\nu^2-1}	\big(1+\frac{\dot{E}_k}{2\nu}\big)-\dot{E}^2_k)\big)\geq 0
\end{equation}
Reorganizing terms and using as before the inequality $1\leq (2\nu)^{-1}\dot{E}_k$ we find
\begin{equation*}
	\sum_{k=1}^{\infty}\frac{d^2E_k^{-p}}{d\nu^2}-\big(\frac{1}{\nu}-\frac{2(p+1)\nu}{\nu^2-1}\big)\frac{dE_k^{-p}}{d\nu}\leq 0
\end{equation*}
proving the assertion.
\end{proof}
\section*{Acknowledgments}
The author thanks Evans Harrell for useful discussions on the concepts presented in this work.

\bibliographystyle{amsplain}

\end{document}